\newtheorem{Theorem}{Theorem}[part]
\newtheorem{Definition}{Definition}[part]
\newtheorem{Proposition}{Proposition}[part]
\newtheorem{Assumption}{Assumption}[part]
\newtheorem{Lemma}{Lemma}[part]
\newtheorem{Remark}{Remark}[part]
\makeatletter \@addtoreset{equation}{section}
\def\esssup{{\rm ess}\,\sup\limits}
\def\Fc{{\cal F}}
\def\05{\frac{1}{2}}
\def\-1{^{-1}}
\def\1{{1\hspace{-1mm}{\rm I}}}
\def\={\;=\;}
\def\.{\;.}
\title{  }
\author{ }
\def\be{\begin{eqnarray}}
\def\ee{\end{eqnarray}}
\def\b*{\begin{eqnarray*}}
\def\e*{\end{eqnarray*}}
\def\E{\mathbb{E}}
\def\R{\R}
\def\P{\mathbb{P}}
\def \ind{{\bf 1}}
\def \R{I\!\!R}
\def\Fc{{\cal F}}
\def\-1{^{-1}}
\def\0.5{\frac{1}{2}}
\def\={\;=\;}
\def\.{\;.}
\def\1{{\bf 1}}
\def\Ybf{{\bf Y}}
\def \sp {\mathcal{S}^p}
\def \spn {\mathcal{S}^{p,\otimes n}}
\def \lpn {\mathbb{L}^{p,\otimes n}}
\title{A propagation of chaos result for weakly interacting nonlinear Snell envelopes
}
\author{Boualem Djehiche \thanks{Department of Mathematics, KTH Royal Institute of Technology, Stockholm, Sweden, email: \texttt{boualem@kth.se}} \and Roxana Dumitrescu \thanks{Department of Mathematics, King's College London, United Kingdom, email: \texttt{roxana.dumitrescu@kcl.ac.uk}} \and Jia Zeng\thanks{Department of Mathematics, King's College London, United Kingdom and 
The University of Hong Kong, Hong Kong, email: \texttt{jia.zeng@kcl.ac.uk}}}
\begin{document}

\maketitle

\begin{abstract}
    In this article, we establish a propagation of chaos result for weakly interacting nonlinear Snell envelopes which converge to a class of mean-field reflected backward stochastic differential equations (BSDEs) with jumps and right-continuous and left-limited obstacle, where the mean-field interaction in terms of the distribution of the $Y$-component of the solution enters both the driver and the lower obstacle. Under mild Lipschitz and integrability conditions on the coefficients, we prove existence and uniqueness of the solution to both the mean-field reflected BSDEs with jumps and the corresponding system of weakly interacting particles and  provide a propagation of chaos result for the whole solution $(Y,Z,U,K)$, which requires new technical results due to the dependence of the obstacle on the solution and the presence of jumps.
\end{abstract}

\textit{Keywords:} Mean-field, Backward SDEs with jumps, Snell envelope, Interacting particle system, Propagation of chaos

\textit{2010 Mathematics Subject Classification}: 60H10, 60H07, 49N90

\section{Introduction}

The notion of propagation of chaos can be traced back at least to the work by Kac \cite{k56} whose initial motivation was to investigate the particle system approximation of some nonlocal partial differential equations (PDEs) arising in thermodynamics. He studied the evolution of a large number $n$ of random particles starting from $n$ given independent and identically distributed random variables, i.e. the initial configuration is chaotic, and whose respective dynamics weakly interact. Kac's insight is that the initial chaotic configuration propagates over time. Precise mathematical treatments of this theory can be found in McKean \cite{m67}, Sznitman \cite{s91} and G{\"a}rtner \cite{g88}. For further development and applications of propagation of chaos theory, see e.g. Jabin \cite{j16}, Lacker \cite{l18}, Shkolnikov \cite{s12} and references therein.  More recently, due to the very active development within mean-field type control and games (see Lasry and Lions \cite{ll07}, Andersson and Djehiche \cite{AD11}), there is a renewed interest to study propagation of chaos for stochastic systems with a view towards optimal control and games.

\noindent A question is to ask whether the propagation of chaos carries over to systems of particles with chaotic terminal configuration \textcolor{black}{such as systems of weakly interacting backward stochastic differential equations (BSDEs)}. 

BSDEs have been widely studied since the pioneering work by Pardoux and Peng \cite{pp90}. In particular, they appear as a powerful tool to solve stochastic optimization problems in many fields such as engineering, investment science including mathematical finance, game theory and insurance, among many other areas. In the Markovian case, BSDEs provide a probabilistic representation of a class of semilinear PDEs. Later Tang and Li \cite{tl94} considered the more involved case when the noise is driven not only by a Brownian motion but also by an independent Poisson random measure. They proved existence and uniqueness of the solution. Barles et al. \cite{bbp97} studied the link of those BSDEs with viscosity solutions of integral-partial differential equations. The notion of reflected BSDEs was first introduced in El Karoui et al. \cite{ekppq97}. In the setting of those BSDEs, one of the components of the solution is forced to stay above a given process. The extension to the case of reflected BSDEs with jumps can be found in e.g.  \cite{dl1}, \cite{dl2}, \cite{dqs15}, \cite{dqs16}, \cite{essaky08}, \cite{ho03}, \cite{ho16}.

Mean-field BSDEs have been introduced in Buckdahn et al. \cite{blp09} and motivated by their connection to control of McKean-Vlasov equation or mean-field games, see Carmona et al. \cite{cdl13} and Acciaio et al. \cite{abc19} for the discussions of their connections.  Later, the addition of a reflection to these BSDEs has been considered in Li \cite{l14} and Djehiche et al. \cite{deh19}. In particular, motivated by the setting of dynamical solvency level in life insurance and pension, Djehiche et al. \cite{deh19} studied a new class of mean-field type reflected BSDEs, where the mean-field interaction in terms of the distribution of the $Y$-component of the solution enters both the driver and the lower obstacle. They obtained the well-posedness of such a class of equations under mild Lipschitz, integrability conditions on the coefficients, and a monotonicity assumption on the driver and the obstacle, using the classical penalization scheme.

Limit theorems, including backward propagation of chaos, for weakly interacting particles whose dynamics is given by a system of backward SDEs are obtained in Buckdahn et al. \cite{bdlp09}, Hu {\it et al.} \cite{hry19}, Lauri{\`e}re and Tangpi \cite{lt19} and Briand {\it et al.} \cite{bcdph20} in the case of non-reflected BSDE driven by Brownian motion. Li \cite{l14} extends the results of \cite{bdlp09} to reflected BSDEs where the weak interaction enters only the driver, while the work by Briand and Hibon \cite{bh21} considers a particular class of reflected BSDEs (under rather special conditions of the involved coefficients) where the constraint acts on the law of the $Y$-component rather than on its path for which they show that the associated increasing process is necessarily deterministic.

In this paper, we establish a propagation of chaos result for a class of weakly interacting reflected BSDEs with jumps and right-continuous left-limited obstacles, having the particularity that even the obstacle process depends on both the solution and the mean field interaction term. This extension of the Brownian motion driven models is not only due to a pure mathematical curiosity, but is rather motivated by the recent modeling problems related to systemic risk which effect the critical dynamical solvency levels of life insurance and pension systems (see \cite{deh19} and the references therein). The discontinuity of the recursive utilities or the prospective reserves (solutions of the BSDE), due to both the discontinuous character of claims (which typical occur following a Poisson jump process) and the discontinuity of the obstacle (due to changes of the regulation and economic shocks), cannot be smoothed away by only considering Brownian motion driven models. 

\paragraph{Main contributions.} The present paper brings novel results to the literature on mean-field reflected BSDEs and propagation of chaos type results in several directions. First, we show the wellposedness of a new class of mean-field reflected BSDEs with jumps and right-continuous left-limited obstacle depending on the component $Y$ of the solution and its law by developing a new technique compared to  Djehiche et al. \cite{deh19} in the Brownian case. Our approach allows to get rid of the monotonicity assumptions on the driver and obstacle with respect to the measure component, which represents a major improvement with respect to  Djehiche et al. \cite{deh19}. More precisely, in order to deal with the possible dependence of the obstacle on the solution, we use a new method based on the characterization of the solution of the mean-field reflected BSDE as a nonlinear optimal stopping problem of mean-field type, as well as on appropriate estimates. Furthermore, we establish a sufficient condition on the type of dependence of the obstacle on the solution which ensures the continuity of the increasing predictable process, which pushes the solution above the obstacle in a minimal way. Indeed, the setting with jumps becomes more challenging in this context due to the dependence of the obstacle on the solution and the two possible sources of jumps, at predictable stopping times and totally inaccessible stopping times.

Second, we provide a particle system approximation of the mean-field reflected BSDE with jumps in terms of a system of weakly interacting Snell envelopes and show its well-posedness in a general setting. The interaction of particles is through the empirical distribution of the system. The third main contribution consists in providing some convergence results of the $n$-particle system to the solution of the mean-field reflected BSDE under mild Lipschitz and integrability conditions on the coefficients, as well as related propagation of chaos results. In order to show the convergence of the weakly interacting system of the Snell envelops, we provide a law of large numbers type result for independent copies of the $Y$-component of the solution to the mean-field reflected BSDE with jumps, which seems new in the literature. The presence of jumps brings technical difficulties that we overcome in our proof thanks to a careful study of the Skorohod space of right continuous functions with left limits. To the best of our knowledge, this is the first paper which studies the well-posedness and the propagation of chaos property of the mean-field reflected BSDEs with jumps in a general context.

\paragraph{Organization of the paper.} In Section 2, we prove the existence and uniqueness of the mean-field reflected BSDEs with jumps, where the mean-field interaction in terms of the distribution of the $Y$-component of the solution enters both the driver and the lower obstacle. The well-posedness of the mean-field reflected BSDE with jumps is obtained by extending the notion of Peng's nonlinear expectation (e.g. Peng \cite{Peng04}). In Section 3, we discuss the interpretation of the mean-field reflected BSDEs with jumps at the particle level and show the existence and uniqueness of the system of weakly interacting particles. In Section 4, we give a law of large numbers result for the empirical measure associated with  $\mathbb{D}:=D([0,T],\R)$-valued independent copies of the $Y$-component of the solution to the mean-field reflected BSDE with jumps w.r.t. the $p$-Wasserstein distance. With this law of large numbers, we prove the convergence of the particle system to solutions of mean-field reflected BSDEs with jumps, and obtain a related propagation of chaos result for the solution $(Y,Z,U,K)$.

\paragraph{Notation.}
\quad Let $(\Omega, \mathcal{F},\mathbb{P})$ be a complete probability space.  $B=(B_t)_{0\leq t\leq T}$  is a standard one dimensional Brownian motion and $N(dt,de)$ is a Poisson random measure, independent of $B$, with compensator $\nu(de)dt$ such that $\nu$ is a $\sigma$-finite measure on $\R^*$ equipped with its Borel $\sigma$-field $\mathcal{B}(\R^*)$. Let $\tilde{N}(dt,du)$ be its compensated process. We denote by $\mathbb{F} = \{\mathcal{F}_t\}$ the (completed) natural filtration associated with $B$ and $N$. Let $\mathcal{P}$ be the $\sigma$-algebra on $\Omega \times [0,T]$ of $\mathcal{F}_t$-progressively measurable sets. Next, we introduce the following spaces, for $p\ge 1$ and $\beta\ge 0$:
\begin{itemize}
    \item $\mathcal{T}_t$ is the set of $\mathbb{F}$-stopping times $\tau$ such that
    $\tau \in [t,T]$ a.s.
    \item \textcolor{black}{$L^p(\mathcal{F}_T)$ is the set of random variables $\xi$ which are $\mathcal{F}_T$-measurable and $\mathbb{E}[|\xi|^p]<\infty$.}
    \item $\mathcal{S}_{\beta}^p$ is the set of real-valued c{\`ad}l{\`a}g adapted processes $y$ such that $||y||^p_{\mathcal{S}_{\beta}^p} :=\mathbb{E}[\underset{0\leq u\leq T}{\sup} e^{\beta p s}|y_u|^p]<\infty$. We set $\mathcal{S}^p=\mathcal{S}_{0}^p$.
    
    \item $\mathcal{S}_{\beta,i}^p$ is the subset of $\mathcal{S}_{\beta}^p$ such that the process $k$ is non-decreasing and $k_0 = 0$. We set $\mathcal{S}_{i}^p=\mathcal{S}_{0,i}^p$.
    
    \item $\mathbb{L}_{\beta}^p$ is the set of real-valued c{\`ad}l{\`a}g adapted processes $y$ such that $||y||^p_{\mathbb{L}_{\beta}^p} :=\underset{\tau\in\mathcal{T}_0}{\sup}\E[e^{\beta p \tau}|y_{\tau}|^p]<\infty$. $\mathbb{L}_{\beta}^p$ is a Banach space (see Theorem 22 in \cite{DM82}, pp. 60 when $p=1$). We set $\mathbb{L}^p=\mathbb{L}_{0}^p$. 

    \item $\mathcal{H}^{p,d}$ is the set of $\mathcal{P}$-measurable, $\R^d$-valued processes such that  $\mathbb{E}[(\int_0^T|v_s|^2ds)^{p/2}] <\infty$.
    
     \item $L^p_{\nu}$ is the set of measurable functions $l:\R^*\to \R$ such that $\int_{\R^*}|l(u)|^p\nu(du)<+\infty$.
    The set $L^2_{\nu}$ is a Hilbert space equipped with the scalar product $\langle\delta,l\rangle_{\nu}:=\int_{\R^*}\delta(u)l(u)\nu(du)$ for all $(\delta,l)\in L^2_{\nu}\times L^2_{\nu}$, and the norm $|l|_{\nu,2}:=\left(\int_{R^*}|l(u)|^2\nu(du)\right)^{1/2}$. If there is no risk for confusion, we sometimes denote $|l|_{\nu,2}:=|l|_{\nu}$.
    
    \item $\mathcal{B}(\R^d)$ (resp. $\mathcal{B}(L^p_{\nu}))$ is the Borel $\sigma$-algebra on $\R^d$ (resp. on $L^p_{\nu}$).
    
    \item $\mathcal{H}^{p,d}_{\nu}$ is the set of predictable processes $l$, i.e. measurable 
    \[l:([0,T]\times \Omega \times \R^*,\mathcal{P}\otimes \mathcal{B}(\R^*))\to (\R^d, \mathcal{B}(\R^d)); \quad (\omega,t,u)\mapsto l_t(\omega,u)\]
    such that $\|l\|_{\mathcal{H}^{p,d}_{\nu}}^p:=\mathbb{E}\left[\left(\int_0^T\sum_{j=1}^d|l^j_t|^2_{\nu}dt\right)^{\frac{p}{2}}\right]<\infty$.  For $d=1$, we denote  $\mathcal{H}^{p,1}_{\nu}:=\mathcal{H}^{p}_{\nu}$.
    
    \item $\mathcal{P}_p(\R)$ is the set of probability measures on $\R$ with finite $p$th moment. We equip the space $\mathcal{P}_p(\R)$ with the $p$-Wasserstein distance denoted by $\mathcal{W}_p$ and defined as
    \begin{align*}
        \mathcal{W}_p(\mu, \nu) := \inf \left\{\int_{\R \times \R} |x - y|^p \pi(dx, dy) \right\}^{1/p},
    \end{align*}
    where the infimum is over probability measures $\pi \in\mathcal{P}_p(\R\times \R)$ with first and second marginals $\mu$ and $\nu$, respectively.

\end{itemize}

\section{Existence and uniqueness of the solution of mean-field reflected BSDE with jumps}
In this section, we study the well-posedness of a new class of reflected BSDE of mean-field type with jumps, which have the additional feature that the obstacle might depend on the solution too. We start with the following definition.
\begin{Definition}
We say that the quadruple of progressively measurable processes $(Y_t, Z_t, U_t, K_t)_{t\leq T}$ is a solution of the mean-field reflected BSDE associated with $(f,\xi, h)$ if
\begin{align}\label{BSDE1}
    \begin{cases}
    Y \in \mathcal{S}^p,  Z\in \mathcal{H}^{p,1}, U\in \mathcal{H}^{p}_{\nu} \text{ and }  K \in \mathcal{S}_i^p \text{and predictable}, \\
    Y_t = \xi +\int_t^T f(s,Y_{s}, Z_{s}, U_{s}, \P_{Y_s})ds + K_T - K_t-\int_t^T Z_s dB_s - \int_t^T \int_{R^*} U_s(e) \tilde N(ds,de), \,\,\, t \in [0,T],\\
     Y_{t}\geq h(t,Y_{t},\P_{Y_t}),\,\, t \in [0,T], \\
    \int_0^T (Y_{t^-}-h(t^-,Y_{t^-},\P_{Y_{t^-}}))dK_t = 0. \qquad(\text{Skorohod's flatness condition})
    \end{cases}
\end{align}
\end{Definition}
\begin{Remark}
The flatness condition is equivalent to the following condition: if $K = K^c + K^d$, where $K^c$ (resp. $K^d$) represents the continuous (resp. the discontinuous) part of $K$, then
\begin{eqnarray*}
&&\int_0^T (Y_{t}-h(t,Y_{t},\P_{Y_t}))dK^c_t = 0, \text{ and } \forall \tau \in \mathcal{T}_T \text{ predictable}, \\
&&\Delta Y_{\tau} = Y_{\tau}- Y_{\tau^-}= -\Delta K_{\tau}^d = -(h(\tau^-, Y_{\tau^-}, \P_{Y_{t^-}|t=\tau})-Y_{\tau})^+ 1_{\{Y_{\tau^-} = h(\tau^-, Y_{\tau^-}, \P_{Y_{t-}|t=\tau})\}}.
\end{eqnarray*}
\end{Remark}

Throughout the paper, we assume $p\ge 2$ and make the following assumption on $(f,h,\xi)$.
\begin{Assumption} \label{generalAssump} 
The coefficients $f,h$ and $\xi$ satisfy
\begin{itemize} 
    \item[(i)] $f$ is a mapping from $[0,T] \times \Omega \times \R \times \R  \times L^p_{\nu} \times \mathcal{P}_p(\R)$ into $\R$ such that
    \begin{itemize}
        \item[(a)] the process $(f(t,0,0,0,\delta_0))_{t\leq T}$ is $\mathcal{P}$-measurable and belongs to $\mathcal{H}^{p,1}$;
        \item[(b)] $f$ is Lipschitz w.r.t. $(y,z,u,\mu)$ uniformly in $(t,\omega)$, i.e. there exists a positive constant $C_f$ such that $\mathbb{P}$-a.s. for all $t\in [0,T],$ 
        \[|f(t,y_1,z_1,u_1,\mu_1)-f(t,y_2,z_2,u_2,\mu_2)|\leq C_f(|y_1-y_2|+|z_1-z_2|+|u_1-u_2|_{\nu}+\mathcal{W}_p(\mu_1, \mu_2))\]
        for any $y_1,y_2\in \R,$ $z_1, z_2 \in \R$, $u_1, u_2 \in L^p_{\nu}$ and $\mu_1, \mu_2 \in \mathcal{P}_p(\R) $.
        \item[(c)] Assume that $d\mathbb{P} \otimes dt$ a.e. for each $(y,z,u_1,u_2,\mu) \in \R^2 \times (L^2_\nu)^2 \times \mathcal{P}_p(\R),$
        \begin{align}
            f(t,y,z,u_1,\mu)-f(t,y,z,u_2,\mu) \geq \langle\gamma_t^{y,z,u_1,u_2,\mu},l_1-l_2\rangle_\nu,
        \end{align}
        with 
        \begin{align*}
        \gamma:[0,T] \times \Omega \times \R^2 \times (L^2_\nu)^2 \times \mathcal{P}_p(\R) \mapsto L^2_\nu;\\
        (\omega,t,y,z,u_1,u_2,\mu) \mapsto \gamma_t^{y,z,u_1,u_2,\mu}(\omega,\cdot)
        \end{align*}
        $\mathcal{P} \otimes \mathcal{B}(\R^2) \otimes \mathcal{B}((L^2_\nu)^2) \otimes \mathcal{B}(\mathcal{P}_p(\R))$ measurable satisfying $\|\gamma_t^{y,z,u_1,u_2,\mu}(\cdot)\|_\nu \leq C$ for all\\ $(y,z,u_1,u_2,\mu) \in \R^2 \times (L^2_\nu)^2 \times \mathcal{P}_p(\R)$, $d\mathbb{P} \otimes dt$-a.e., where $C$ is a positive constant, and such that $\gamma_t^{y,z,u_1,u_2,\mu}(e) \geq -1$, for all $(y,z,u_1,u_2,\mu) \in \R^2 \times (L^2_\nu)^2 \times \mathcal{P}_p(\R)$, $d\mathbb{P} \otimes dt \otimes d\nu(e)$-a.e.
        \end{itemize}
    \item[(ii)] $h$ is a mapping from $[0,T] \times \Omega \times \R \times \mathcal{P}_p(\R)$ into $\R$ such that
    \begin{itemize}
        \item[(a)] For all $(y,\mu) \in \R \times \mathcal{P}_p(\R)$, $(h(t,y,\mu))_t$ is a right-continuous left-limited process;
        \item[(b)] the process $\left(\sup_{(y,\mu)\in \R\times\mathcal{P}_p(\R)}|h(t,y,\mu)|\right) _{0\le t\leq T}$ belongs to $\mathcal{S}^p$;
        \item[(c)] $h$ is Lipschitz w.r.t. $(y,\mu)$ uniformly in $(t,\omega)$, i.e. there exists two positive constants $\gamma_1$ and $\gamma_2$ such that $\mathbb{P}$-a.s. for all $t\in [0,T],$ 
        \[|h(t,y_1,\mu_1)-h(t,y_2,\mu_2)|\leq \gamma_1|y_1-y_2|+\gamma_2\mathcal{W}_p(\mu_1, \mu_2)\]
        for any $y_1,y_2\in \R$ and $\mu_1,\mu_2 \in \mathcal{P}_p(\R)$.
    \end{itemize}
    \item[(iii)] $\xi \in L^p(\Fc_T)$ and satisfies $\xi \geq h(T,\xi,\P_{\xi})$.
    \end{itemize}
\end{Assumption}

\begin{Remark}
If the barrier $h$ depends only on $\mu$ i.e. $h(t,y,\mu)=h(t,\mu)$, the domination condition (ii)(b) is not needed to derive the results below and can be dropped.
\end{Remark}

\medskip

We now prove the existence and the uniqueness of the solution for the mean-field reflected BSDE with jumps \eqref{BSDE1}, for which the law of the $Y$-component of the solution enters both the driver and the obstacle. Our method relies on Peng's nonlinear expectation (see e.g. Peng \cite{Peng04}) and a characterization of the solution of a mean-field reflected BSDE with jumps as a specific \textit{mean-field nonlinear optimal stopping problem}. This approach is completely different from the one used in \cite{deh19} (where the penalization technique under a domination condition has been used in the case of a Brownian filtration) and allows to weaken the assumptions on the coefficients $f$ and $h$, in particular we get rid of the monotonicity assumptions with respect to the measure component $\mu$.

\medskip
\noindent Let 
$\widehat{\Phi}: \,\mathbb{L}^p_\beta\longrightarrow \mathbb{L}^p_\beta$ be the mapping that associates to a process $Y$ another process $\widehat{\Phi}(Y)$ defined by:
\begin{equation}\label{eq11}
\begin{array}{lll} 
\widehat{\Phi}(Y)_t := \underset{\tau\in\mathcal{T}_t}{\esssup\,} \mathcal{E}^{f \circ Y}_{t,\tau}\left[ h(\tau,Y_{\tau},\P_{Y_s|s=\tau})\ind_{\{\tau<T\}}+\xi \ind_{\{\tau=T\}}\right],
\end{array}
\end{equation}
where $(f \circ Y)(t,y,z,u)=f(t,y,z,u,\P_{Y_t})$. 

\medskip
\begin{Theorem}\label{mf-case}
Suppose that Assumption \ref{generalAssump} is in force for some $p\ge 2$. Assume that $\gamma_1$ and $\gamma_2$ satisfy 
\begin{align} \label{smallnessCond}
\gamma_1^p+\gamma_2^p<2^{2-\frac{3p}{2}}.
\end{align}

Then the system (\ref{BSDE1}) has a unique solution in $\mathcal{S}^p \times \mathcal{H}^{p,1} \times \mathcal{H}^p_\nu \times \mathcal{S}_{i}^p$.
\end{Theorem}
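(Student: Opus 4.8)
The plan is to realise any solution of \eqref{BSDE1} as a fixed point of the map $\widehat{\Phi}$ defined in \eqref{eq11}, and to prove that $\widehat{\Phi}$ is a contraction on the Banach space $\mathbb{L}^p_\beta$ for a suitable $\beta$. The guiding principle is that \emph{freezing} a process $Y$ kills the mean-field coupling: the driver $(f\circ Y)(t,y,z,u)=f(t,y,z,u,\P_{Y_t})$ becomes an ordinary driver satisfying the Lipschitz condition (i)(b) and the monotonicity/jump condition (i)(c) (notably $\gamma_t^{\cdots}(e)\ge -1$), while the barrier $S^Y_t:=h(t,Y_t,\P_{Y_t})$ becomes a fixed càdlàg obstacle which, by (ii)(a)--(c) together with the domination (ii)(b), lies in $\mathcal{S}^p$ with $\xi\ge S^Y_T$. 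Thus the frozen problem is a genuine reflected BSDE with jumps, whose $Y$-component admits the nonlinear optimal-stopping (nonlinear Snell envelope) representation that is exactly $\widehat{\Phi}(Y)$, by the known characterisation of reflected BSDEs with jumps as optimal stopping under $\mathcal{E}^{f\circ Y}$. A fixed point $Y^\ast=\widehat{\Phi}(Y^\ast)$ renders the frozen law and path self-consistent, so reconstructing $(Z^\ast,U^\ast,K^\ast)$ from the frozen equation yields a solution of \eqref{BSDE1}.

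I would organise the argument in four steps. First, I show that $\widehat{\Phi}$ is well defined and maps $\mathbb{L}^p_\beta$ into itself (in fact into $\mathcal{S}^p$), by applying the existence/uniqueness theory and the a priori $\mathbb{L}^p$ estimates for reflected BSDEs with jumps to the frozen data $(f\circ Y,S^Y,\xi)$. Second, and this is the core, I prove the contraction property. For $Y,Y'\in\mathbb{L}^p_\beta$, the elementary inequality $|\esssup_\tau A_\tau-\esssup_\tau B_\tau|\le \esssup_\tau|A_\tau-B_\tau|$ reduces the estimate to bounding, uniformly in $\tau\in\mathcal{T}_t$, the difference of the nonlinear expectations $\mathcal{E}^{f\circ Y}_{t,\tau}$ and $\mathcal{E}^{f\circ Y'}_{t,\tau}$ applied to the respective rewards. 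An $L^p$ a priori estimate for the BSDE defining $\mathcal{E}$ splits this into a driver part, controlled by $C_f\,\mathcal{W}_p(\P_{Y_r},\P_{Y'_r})$ and, after weighting, of order $\beta^{-1}\|Y-Y'\|^p_{\mathbb{L}^p_\beta}$ (negligible for large $\beta$), and a terminal part carrying the obstacle difference $|h(\tau,Y_\tau,\P_{Y_\tau})-h(\tau,Y'_\tau,\P_{Y'_\tau})|\le \gamma_1|Y_\tau-Y'_\tau|+\gamma_2\mathcal{W}_p(\P_{Y_\tau},\P_{Y'_\tau})$.

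The quantitative point is that the obstacle contribution does \emph{not} decay in $\beta$, since it couples values at equal times; this is precisely what forces a smallness condition. Using $(a+b)^p\le 2^{p-1}(a^p+b^p)$, the synchronous-coupling bound $\mathcal{W}_p^p(\P_{Y_s},\P_{Y'_s})\le \E[|Y_s-Y'_s|^p]$, and the observation that $e^{\beta p s}\E[|Y_s-Y'_s|^p]\le\|Y-Y'\|^p_{\mathbb{L}^p_\beta}$ for deterministic $s$ (hence along $\tau$ after optional stopping), the terminal part is bounded by $2^{p-1}(\gamma_1^p+\gamma_2^p)$ times a constant $2^{p/2-1}$ coming from the $L^p$ control of the Brownian and compensated-jump martingale parts in the nonlinear-expectation estimate. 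This yields
\[
\|\widehat{\Phi}(Y)-\widehat{\Phi}(Y')\|^p_{\mathbb{L}^p_\beta}\le \Big(2^{\frac{3p}{2}-2}(\gamma_1^p+\gamma_2^p)+\frac{C}{\beta}\Big)\|Y-Y'\|^p_{\mathbb{L}^p_\beta},
\]
so that the smallness condition \eqref{smallnessCond}, i.e. $2^{\frac{3p}{2}-2}(\gamma_1^p+\gamma_2^p)<1$, lets me fix $\beta$ large enough that the total coefficient is strictly below $1$; the Banach fixed point theorem then produces a unique $Y^\ast\in\mathbb{L}^p_\beta$.

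Finally, I would reconstruct the remaining components by solving the frozen reflected BSDE with jumps for $(f\circ Y^\ast,S^{Y^\ast},\xi)$, obtaining $(Z^\ast,U^\ast,K^\ast)\in\mathcal{H}^{p,1}\times\mathcal{H}^p_\nu\times\mathcal{S}^p_i$ with $K^\ast$ predictable and upgrading $Y^\ast$ to $\mathcal{S}^p$, so that $(Y^\ast,Z^\ast,U^\ast,K^\ast)$ solves \eqref{BSDE1}; uniqueness follows since the $Y$-component of any solution is necessarily a fixed point of $\widehat{\Phi}$, hence equals $Y^\ast$, after which uniqueness for the frozen reflected BSDE pins down $(Z,U,K)$. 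I expect the main obstacle to be the contraction estimate in the presence of jumps: deriving the nonlinear-expectation $L^p$ estimate with the sharp constant (condition (i)(c) is needed both to secure the comparison principle through positivity of the linearising weight and to control the compensated-jump term), and handling the mean-field object $\P_{Y_s|s=\tau}$ — a deterministic flow of marginals evaluated at a stopping time — rigorously when passing from deterministic times to stopping times in the exponentially weighted estimate.
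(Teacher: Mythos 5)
Your proposal is correct and follows the same skeleton as the paper's proof: the same fixed-point map $\widehat{\Phi}$ from \eqref{eq11}, the same nonlinear Snell-envelope characterisation of the frozen reflected BSDE with jumps (using Assumption \ref{generalAssump}(i)(c) for the comparison/representation), the same synchronous-coupling bound $\mathcal{W}_p^p(\mathbb{P}_{Y_s},\mathbb{P}_{Y'_s})\le\E[|Y_s-Y'_s|^p]$, and the same reconstruction-and-uniqueness step via the frozen equation and the nonlinear Doob--Meyer decomposition; even your obstacle coefficient $2^{\frac{3p}{2}-2}(\gamma_1^p+\gamma_2^p)$ matches the paper's and yields exactly \eqref{smallnessCond}. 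The one genuine difference is the mechanism that makes the driver contribution contractive. The paper fixes the parameters $(\eta,\beta)$ of the a priori estimate (Proposition \ref{estimates}) and localises in time: the driver part carries a factor $\delta^{p/2}$ on $[T-\delta,T]$, giving a contraction on a small terminal window, after which the solution is built by backward iteration and pasting over the intervals $[T-(i+1)\delta,T-i\delta]$. You instead keep the whole interval $[0,T]$ and tune the weight: taking $\eta$ small (hence $\beta\ge 2C_f+3/\eta$ large), the driver coefficient $2^{p/2-1}\eta^p C_f^p T^{p/2}$ becomes arbitrarily small, so $\widehat{\Phi}$ is a one-shot contraction on $\mathbb{L}^p_\beta$ and no pasting is needed --- a mild but real streamlining, at the price of working with a $\beta$-dependent (though equivalent) norm. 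Two points to tighten: (i) the driver part decays like $\eta^p\sim\beta^{-p}$ through the $\eta^p$ prefactor in Proposition \ref{estimates}, not like $\beta^{-1}$; this is harmless, but the smallness genuinely comes from $\eta$, since the naive weighting argument gives exact cancellation of the exponentials ($e^{2\beta s}\mathcal{W}_p^2(\mathbb{P}_{Y_s},\mathbb{P}_{Y'_s})\le\|Y-Y'\|^2_{\mathbb{L}^p_\beta}$ with no gain in $\beta$); (ii) passing from the pointwise bound $e^{p\beta t}|\widehat{\Phi}(Y)_t-\widehat{\Phi}(Y')_t|^p\le{\rm ess}\,\sup_{\tau\in\mathcal{T}_t}\E[G(\tau)\,|\,\mathcal{F}_t]$ to the norm bound $\sup_{\sigma}\E\left[e^{p\beta\sigma}|\widehat{\Phi}(Y)_\sigma-\widehat{\Phi}(Y')_\sigma|^p\right]\le\sup_{\tau}\E[G(\tau)]$ is not ``optional stopping'': it uses that the family $\{\E[G(\tau)\,|\,\mathcal{F}_\sigma]\}_{\tau}$ is directed upwards, so a maximising sequence of stopping times exists (Lemma D.1 in \cite{KS98}), followed by Fatou's lemma --- which is exactly how the paper closes this step, and your write-up should make it explicit.
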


\begin{proof} The proof is organized in three steps.\\
\noindent \uline{\textit{Step 1.}}
We first show that $\widehat{\Phi}$  is a well-defined map from $\mathbb{L}^{p}_{\beta}$ to itself. Indeed, let $\bar{Y} \in \mathbb{L}^{p}_{\beta}$. Since $h$ satisfies Assumption \ref{generalAssump} (ii), it follows that $h(t,\bar{Y}_t,\mathbb{P}_{\bar{Y}_t}) \in \mathcal{S}^p$. Therefore, there exists an unique solution $(\hat{Y}, \hat{Z}, \hat{U}, \hat{K}) \in \mathcal{S}^p \times \mathcal{H}^{p,1} \times \mathcal{H}^p_\nu \times \mathcal{S}_{i}^p$ of the reflected BSDE associated with obstacle $h(t,\bar{Y}_t,\mathbb{P}_{\bar{Y}_t})$, terminal condition $\xi$ and driver $(f \circ {\bar{Y}})(t,\omega,y,z,u)$. Since $f$ satisfies Assumption \ref{generalAssump} (i.c), by classical results on the link between the $Y$-component of the solution of a reflected BSDE and optimal stopping with nonlinear expectations (see e.g. \cite{dqs15}, \cite{dqs16}), we get that $\widehat{\Phi}(\bar{Y})=\hat{Y} \in \mathcal{S}^p \subset \mathbb{L}^{p}_{\beta}$.

\vspace{2mm}

\noindent \uline{\textit{Step 2.}} We show that $\widehat{\Phi}: \mathbb{L}_{\beta}^p \longrightarrow \mathbb{L}_{\beta}^p$ is a contraction on the time interval $[T-\delta,T]$.

\medskip
First, note that by the Lipschitz continuity of $f$ and $h$, for $Y,\bar{Y} \in \mathcal{S}^{p}_\beta$, $Z,\bar{Z} \in \mathcal{H}^{p,1}$, and $U,\bar{U} \in \mathcal{H}_\nu^{p}$,
 \begin{equation}
\begin{array}{lll}\label{f-h-lip-3}
|f(s,Y_s,Z_s,U_s, \P_{Y_s})-f(s,\bar{Y}_s,\bar{Z}_s,\bar{U}_s,\P_{\bar{Y}_s})|
\le  C_f(|Y_s-\bar{Y}_s|+|Z_s-\bar{Z}_s|+|U_s-\bar{U}_s|_{\nu}+\mathcal{W}_p(\P_{Y_s},\P_{\bar{Y}_s})), \\ \\
|h(s,Y_s,\P_{Y_s})-h(s,\bar{Y}_s,\P_{\bar{Y}_s})|\le \gamma_1|Y_s-\bar{Y}_s|+\gamma_2 \mathcal{W}_p(\P_{Y_s},\P_{\bar{Y}_s}).
\end{array}
\end{equation}
For the $p$-Wasserstein distance, we have the following inequality: for $0\leq s \leq u \leq t \leq T$,
\be \label{Wass-property}
\sup_{u\in [s,t]} \mathcal{W}_p(\P_{Y_u},\P_{\bar{Y}_u}) \leq \sup_{u\in [s,t]}(\mathbb{E}[|Y_u-\bar{Y}_u|^p])^{1/p},
\ee
from which we derive the following useful inequality
\be
\sup_{u\in [s,t]} \mathcal{W}_p(\P_{Y_u},\delta_0) \leq \sup_{u\in [s,t]}(\mathbb{E}[|Y_u|^p])^{1/p}.
\ee
Fix $Y,\hat{Y} \in \mathbb{L}_\beta^p$. For any $t\leq T$, by the estimates (A.1) on BSDEs, we have  
\begin{equation*}\begin{array} {lll}
\quad |\widehat{\Phi}(Y)_t-\widehat{\Phi}(\bar{Y})_t|^{p} \\
=|\underset{\tau\in\mathcal{T}_t}{\esssup\,}\mathcal{E}_{t,\tau}^{f \circ Y }[h(\tau,Y_\tau,\P_{Y_s|s=\tau})\ind_{\{\tau<T\}}+\xi\ind_{\{\tau=T\}}]
 -\underset{\tau\in\mathcal{T}_t}{\esssup\,}\mathcal{E}_{t,\tau}^{f \circ \bar{Y} }[h(\tau,\bar{Y}_\tau,\P_{\bar{Y}_s|s=\tau})\ind_{\{\tau<T\}}+\xi \ind_{\{\tau=T\}}]|^{p}
\\  \le \underset{\tau\in\mathcal{T}_t}{\esssup\,}\left|\mathcal{E}_{t,\tau}^{f \circ Y}[h(\tau,Y_\tau,\P_{Y_s|s=\tau})\ind_{\{\tau<T\}}+\xi\ind_{\{\tau=T\}}]- \mathcal{E}_{t,\tau}^{f \circ \bar{Y} }[h(\tau,\bar{Y}_\tau,\P_{\bar{Y}_s|s=\tau})\ind_{\{\tau<T\}}+\xi\ind_{\{\tau=T\}}] \right|^{p}\\
\le \underset{\tau\in\mathcal{T}_t}{\esssup\,}2^{\frac{p}{2}-1}\E[\eta^p \left(\int_t^\tau e^{2 \beta  (s-t)} |(f \circ Y) (s,\widehat{Y}^{\tau}_s,\widehat{Z}^{\tau}_s,\widehat{U}^{\tau}_s)-(f \circ \bar{Y}) (s,\widehat{Y}^{\tau}_s, \widehat{Z}^{\tau}_s,\widehat{U}^{\tau}_s)|^2 ds\right)^{\frac{p}{2}} \\ \qquad\qquad \qquad +e^{p \beta(\tau-t)}|h(\tau,Y_\tau,\P_{Y_s|s=\tau})-h(\tau,\bar{Y}_\tau, \P_{\bar{Y}_s|s=\tau})|^p
|\mathcal{F}_t]\\ = \underset{\tau\in\mathcal{T}_t}{\esssup\,} 2^{\frac{p}{2}-1}\E[\eta^p \left(\int_t^\tau e^{2 \beta (s-t)}|f(s,\widehat{Y}^{\tau}_s,\widehat{Z}^{\tau}_s,\widehat{U}^{\tau}_s,
\P_{Y_s})-f(s,\widehat{Y}^{\tau}_s,\widehat{Z}^{\tau}_s,\widehat{U}^{\tau}_s, \P_{\bar{Y}_s})|^2 ds \right)^{\frac{p}{2}}  \\  \qquad\qquad \qquad +e^{p \beta(\tau-t)}|h(\tau,Y_\tau,\P_{Y_s|s=\tau})-h(\tau,\bar{Y}_\tau, \P_{\bar{Y}_s|s=\tau})|^p
|\mathcal{F}_t],
 \end{array}
\end{equation*}
with $\eta$, $\beta>0$ such that  $\eta \leq \frac{1}{C_f^2}$ and  $\beta \geq 2 C_f+\frac{3}{\eta}$, where $(\widehat{Y}^{\tau},\widehat{Z}^{\tau},\widehat{U}^{\tau})$ is the solution of the BSDE associated with driver $f\circ \bar{Y}$ and terminal condition $h(\tau,{\bar{Y}}_\tau,\P_{\bar{Y}_s|s=\tau})\ind_{\{\tau<T\}}+\xi\ind_{\{\tau=T\}}$.
Therefore, using \eqref{f-h-lip-3} and the fact that 
\begin{equation} \label{Wass-property-1}\mathcal{W}_p^{p} (\P_{Y_s|s=\tau},\P_{\bar{Y}_s|s=\tau})\le \E[|Y_s-\bar{Y}_s|^p]_{|s=\tau}\le \underset{\tau\in \mathcal{T}_t}{\sup}\E[|Y_{\tau}-\bar{Y}_{\tau}|^p],
\end{equation}
we have, for any $t\in [T-\delta,T]$,
\begin{equation}\label{ineq-evsn-3}
\begin{array} {ll}
e^{p \beta t}|\widehat{\Phi}(Y)_t-\widehat{\Phi}(\bar{Y})_t|^p \le \underset{\tau\in\mathcal{T}_t}{\esssup\,} 2^{\frac{p}{2}-1}\E[\int_t^\tau \delta^{\frac{p-2}{2}}e^{p \beta s}\eta^pC_f^p\E[|Y_s-\bar{Y}_s|^p]ds  \\  \qquad\qquad \qquad \qquad\qquad\qquad\qquad +e^{p \beta \tau}\left(\gamma_1|Y_\tau-\bar{Y}_\tau|+\gamma_2\left\{\E[|Y_s-\bar{Y}_s|^p]_{|s=\tau}\right\}^{1/p}\right)^p
|\mathcal{F}_t],
\\
\qquad\qquad \qquad \qquad\qquad \le \underset{\tau\in\mathcal{T}_t}{\esssup\,} 2^{\frac{p}{2}-1}\E[\int_t^\tau \delta^{\frac{p-2}{2}} e^{p \beta s}\eta^pC_f^p\E[|Y_s-\bar{Y}_s|^p]ds  \\  \qquad\qquad \qquad \qquad\qquad\qquad\qquad +e^{p \beta\tau}\left\{2^{p-1}\gamma_1^p|Y_\tau-\bar{Y}_\tau|^p+2^{p-1}\gamma_2^p\E[|Y_s-\bar{Y}_s|^p]_{|s=\tau}\right\}
|\mathcal{F}_t].
\end{array}
\end{equation}
Therefore,
$$
e^{p \beta t}|\widehat{\Phi}(Y)_t-\widehat{\Phi}(\bar{Y})_t|^p \le \underset{\tau\in\mathcal{T}_t}{\esssup\,} \E[G(\tau)|\mathcal{F}_t]:=V_t,
$$
where
$$
G(\tau):=2^{\frac{p}{2}-1} \left(\int_{T-\delta}^{\tau} \delta^{\frac{p-2}{2}} e^{p \beta s}\eta^pC_f^p\E[|Y_s-\bar{Y}_s|^p]ds+e^{p \beta \tau}(2^{p-1}\gamma_1^p|Y_{\tau}-\bar{Y}_{\tau}|^p+2^{p-1}\gamma_2^p\E[|Y_{s}-\bar{Y}_{s}|^p]_{|s=\tau})\right).
$$
which yields
\begin{equation}\label{est-1}
\underset{\tau\in\mathcal{T}_{T-\delta}}{\sup}\E[e^{p \beta \tau}|\widehat{\Phi}(Y)_{\tau}-\widehat{\Phi}(\bar{Y})_{\tau}|^p] \le \underset{\tau\in\mathcal{T}_{T-\delta}}{\sup}\E[V_{\tau}].
\end{equation}
We have
\begin{equation}\label{est-2}
\underset{\tau\in\mathcal{T}_{T-\delta}}{\sup}\E[V_{\tau}]\le \alpha \underset{\tau\in\mathcal{T}_{T-\delta}}{\sup}\E[e^{p \beta \tau}|Y_{\tau}-\bar{Y}_{\tau}|^p]. 
\end{equation}
Indeed, by Lemma D.1 in \cite{KS98}, for any $\sigma\in\mathcal{T}_{T-\delta}$,  there exists a sequence $(\tau_n)_n$ of stopping times in $\mathcal{T}_{\sigma}$ such that
$$
V_{\sigma}=\underset{n\to\infty}{\lim}\E[G(\tau_n)|\mathcal{F}_{\sigma}]
$$
and so, by Fatou's Lemma, we have
$$
\E[V_{\sigma}]\le \underset{n\to\infty}{\underline{\lim}}\E[G(\tau_n)]\le \underset{\tau\in\mathcal{T}_{T-\delta}}{\sup}\E[G(\tau)].
$$
Therefore,
$$
\underset{\tau\in\mathcal{T}_{T-\delta}}{\sup}\E[e^{p \beta \tau}|\widehat{\Phi}(Y)_\tau-\widehat{\Phi}(\bar{Y})_\tau|^p] \le \underset{\tau\in\mathcal{T}_{T-\delta}}{\sup}\E[G(\tau)].
$$
Using \eqref{Wass-property-1} and noting that $e^{p\beta\tau}\E[|Y_s-\bar{Y}_s|^p]_{|s=\tau}=\E[e^{p\beta s}|Y_s-\bar{Y}_s|^p]_{|s=\tau}$, we obtain
$$
\underset{\tau\in\mathcal{T}_{T-\delta}}{\sup}\E[G(\tau)]\le \alpha \underset{\tau\in\mathcal{T}_{T-\delta}}{\sup}\E[e^{p \beta \tau}|Y_{\tau}-\bar{Y}_{\tau}|^p]
$$
where $\alpha:=2^{\frac{p}{2}-1}\delta^{\frac{p}{2}} \eta^p C_f^p+2^{\frac{p}{2}-1} 2^{p-1}(\gamma_1^p+\gamma_2^p)$. This in turn yields \eqref{est-2}.

Assuming $(\gamma_1,\gamma_2)$ satisfies
\begin{equation*}
\gamma_1^p+\gamma_2^p<2^{2-\frac{3p}{2}}
\end{equation*}
we can choose  
$$
0<\delta<\left(\frac{1}{2^{\frac{p}{2}-1}\eta^p C_f^p}
\left(1-2^{\frac{3p}{2}-1}(\gamma^p_1+\gamma^p_2)\right)\right)^{2/p}
$$ 
 to make $\widehat{\Phi}$ a contraction on $\mathbb{L}^p_\beta$ over the time interval $[T-\delta,T]$, i.e. $\widehat{\Phi}$ admits a unique fixed point over $[T-\delta,T]$.
 
\medskip
\noindent \uline{\textit{Step 3.}} We finally show that the mean-field RBSDE with jumps \eqref{BSDE1} has a unique solution.

\medskip
Given $Y\in \mathbb{L}_\beta^{p}$ the solution over $[T-\delta,T]$ obtained in \textit{Step 2}, let $(\hat{Y},\hat{Z},\hat{U},\hat{K}) \in \mathcal{S}^p \times \mathcal{H}^{p,1} \times \mathcal{H}^p_\nu \times \mathcal{S}_{i}^p$ be the unique solution of the standard reflected BSDE, with barrier $h(s,Y_s,\P_{Y_s})$ (which belongs to $\mathcal{S}^{p}$ by Assumption \ref{generalAssump} (ii)(a)) and driver $g(s,y,z,u):=f(s,y,z,u,\P_{Y_s})$. Then,
$$
\hat{Y}_t=\underset{\tau\in\mathcal{T}_t}{\esssup\,}\mathcal{E}_{t,\tau}^{g}\left[h(\tau,Y_{\tau},\P_{Y_s}|_{s=\tau})\ind_{\{\tau<T\}}+\xi \ind_{\{\tau=T\}}\right].
$$
By the fixed point argument from Step 2, we have $\hat{Y}=Y$ a.s., which implies that $Y \in \mathcal{S}^{p}$ and
$$
Y_t = \xi +\int_t^T f(s,Y_{s},  \hat{Z}_s,\hat{U}_s, \P_{Y_s})ds + \hat{K}_T - \hat{K}_t -\int_t^T \hat{Z}_s dB_s - \int_t^T \int_{R^*} \hat{U}_s(e) \tilde N(ds,de), \quad T-\delta \leq t \leq T,
$$
which yields existence of a solution.
Furthermore, the process $Y_t$ is a \textit{nonlinear} g-supermartingale. Hence, by the uniqueness of the  \textit{nonlinear} Doob-Meyer decomposition, we obtain uniqueness of the associated processes $(\hat{Z},\hat{U},\hat{K})$. This yields existence and uniqueness of \eqref{BSDE1} in this general case.

Applying the same reasoning on each time interval $[T-(i+1)\delta,T-i\delta]$ with a similar dynamics but terminal condition $Y_{T-i\delta}$ at time $T-i\delta$, we build recursively for $i=1$ to any $n$ a solution $(Y,Z,U,K)\in \mathcal{S}^p \times \mathcal{H}^{p,1} \times \mathcal{H}^{p}_{\nu} \times S_i^p$ on each time interval $[T-(i+1)\delta,T-i\delta].$ Pasting these processes, we derive a unique solution $(Y,Z,U,K)$ satisfying (\ref{BSDE1}) on the full time interval $[0,T]$.
\qed
\end{proof}

\medskip
Suppose that the assumptions of Theorem \ref{mf-case} hold. We now give a sufficient condition on the barrier $h$ under which the component $(Y_t)$ of the solution $(Y,Z,U,K)$ of the mean-field reflected BSDE \eqref{BSDE1} has only totally inaccessible jumps, i.e. the process $(K_t)$ is continuous.
We first recall the following definition (see e.g. \cite{dqs15}, \cite{dqs16}).
\begin{Definition}
A progressive process $(\phi_t)$ is called \textit{left-upper semicontinuous (in short l.u.s.c) along stopping times}  if for all $\tau \in \mathcal{T}_0$ and for each non-decreasing sequence of stopping times $(\tau_n)$ such that $\tau_n \uparrow \tau$ a.s., $\phi_\tau \geq \underset{n \mapsto \infty}{\limsup}\,\phi_{\tau_n}$ a.s.
\end{Definition}
Note that no condition is required at totally inaccessible jumps. When the process $(\phi_t)$ is left limited, $(\phi_t)$ is l.u.s.c. along stopping times if and only if for each predictable stopping time $\tau \in \mathcal{T}_0$, $\phi_\tau \geq \phi_{\tau^-}$ a.s.
\begin{Theorem}\label{inacces}
Suppose that $\gamma_1$ and $\gamma_2$ satisfy the condition \eqref{smallnessCond}. Assume that $h$ takes the form $h(t,\omega,y,\mu):=\xi_t(\omega)+\kappa(y,\mu)$, where $\xi$  belongs to $\mathcal{S}^p$ and is a left upper semicontinuous process along stopping times, and $\kappa$ is a bounded and Lipschitz function with respect to $(y,\mu)$. Let $(Y,Z,U,K)$ be the unique solution of the mean-field reflected BSDE \eqref{BSDE1}. Then $(Y_t)$ has jumps only at totally inaccessible stopping times (i.e. the predictable process $(K_t)$ is continuous).
\end{Theorem}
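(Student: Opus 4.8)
The plan is to freeze the fixed point $Y$ produced by Theorem~\ref{mf-case} and to read \eqref{BSDE1} as a \emph{standard} (non mean-field) reflected BSDE with jumps: with $Y$ held fixed, $(Y,Z,U,K)$ solves the reflected BSDE with driver $g(s,y,z,u):=f(s,y,z,u,\P_{Y_s})$ and càdlàg obstacle $S_t:=h(t,Y_t,\P_{Y_t})=\xi_t+\kappa(Y_t,\P_{Y_t})$, exactly as in \textit{Step 3} above. Since $K$ is predictable and increasing, its purely discontinuous part $K^d$ charges only a countable family of predictable stopping times; hence showing $K$ continuous amounts to proving $\Delta K_\tau=0$ a.s.\ for every predictable stopping time $\tau$. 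I would argue by contradiction, assuming there is a predictable $\tau$ with $\P(\Delta K_\tau>0)>0$.

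On the event $\{\Delta K_\tau>0\}$, the flatness characterization recalled in the Remark following \eqref{BSDE1} gives $Y_{\tau^-}=S_{\tau^-}$ and $\Delta Y_\tau=-\Delta K_\tau$. Because $B$ is continuous and $\int_0^\cdot\int U\,d\tilde N$ is quasi–left-continuous (a Poisson random measure with time-nonatomic compensator puts no mass at a fixed predictable time), the martingale part of $Y$ does not jump at $\tau$; consequently $|Y_\tau-Y_{\tau^-}|=\Delta K_\tau$ holds a.s.\ (both sides vanish off the jump event). Combining $Y_\tau=S_{\tau^-}-\Delta K_\tau$ with the constraint $Y_\tau\ge S_\tau$ yields the pointwise bound $\Delta K_\tau\le S_{\tau^-}-S_\tau$.

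I would then estimate the obstacle's downward jump. Writing $S_{\tau^-}-S_\tau=(\xi_{\tau^-}-\xi_\tau)+\big(\kappa(Y_{\tau^-},\P_{Y_{\tau^-}})-\kappa(Y_\tau,\P_{Y_\tau})\big)$ and invoking the left-upper semicontinuity of $\xi$ along stopping times, which gives $\xi_\tau\ge\xi_{\tau^-}$, the $\xi$-contribution is nonpositive and drops out, while the Lipschitz property of $\kappa$ leaves $S_{\tau^-}-S_\tau\le \gamma_1|Y_{\tau^-}-Y_\tau|+\gamma_2\,\mathcal{W}_p(\P_{Y_{\tau^-}},\P_{Y_\tau})$. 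Here the left limit $\kappa(Y_{\tau^-},\P_{Y_{\tau^-}})$ of $s\mapsto\kappa(Y_s,\P_{Y_s})$ comes from the continuity of $\kappa$ together with $\mathcal{W}_p(\P_{Y_s},\P_{Y_{\tau^-}})\to 0$ as $s\uparrow\tau$ (dominated convergence in $\mathcal{S}^p$). Using $\mathcal{W}_p(\P_{Y_{\tau^-}},\P_{Y_\tau})\le(\E|Y_{\tau^-}-Y_\tau|^p)^{1/p}$ and the a.s.\ identity $|Y_{\tau^-}-Y_\tau|=\Delta K_\tau$, and setting $m:=(\E[(\Delta K_\tau)^p])^{1/p}$, I obtain $(1-\gamma_1)\Delta K_\tau\le \gamma_2\,m$, which holds a.s.

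The self-referential nature of the obstacle — its downward jump is itself controlled by the very jump of $Y$ it is meant to explain — is the main obstacle, and it is resolved by passing to $L^p$. The smallness condition \eqref{smallnessCond} implies $\gamma_1+\gamma_2<1$: by the elementary bound $\gamma_1+\gamma_2\le 2^{1-1/p}(\gamma_1^p+\gamma_2^p)^{1/p}$ and \eqref{smallnessCond}, one gets $\gamma_1+\gamma_2<2^{1-1/p}2^{2/p-3/2}=2^{1/p-1/2}\le 1$ for $p\ge 2$. In particular $\gamma_1<1$, so I may divide to obtain $\Delta K_\tau\le \frac{\gamma_2}{1-\gamma_1}\,m$ a.s.; taking $L^p$-norms gives $m\le \frac{\gamma_2}{1-\gamma_1}\,m$ with $\frac{\gamma_2}{1-\gamma_1}<1$, forcing $m=0$, i.e.\ $\Delta K_\tau=0$ a.s. This contradicts $\P(\Delta K_\tau>0)>0$. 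Hence $K^d\equiv 0$, $K$ is continuous, and $(Y_t)$ jumps only at totally inaccessible stopping times.
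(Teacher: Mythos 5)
Your strategy is genuinely different from the paper's: the paper runs the Picard iteration $Y^{n+1}=\widehat{\Phi}(Y^n)$, shows by induction (invoking the known result for \emph{standard} reflected BSDEs with jumps that an obstacle which is l.u.s.c.\ along stopping times produces a continuous $K$) that each iterate $Y^n$ jumps only at totally inaccessible times, and then transfers this to the limit via the $\mathbb{L}^p$-contraction, since $\Delta Y_\tau^n=0$ forces $\E[|\Delta Y_\tau|^p]\le 2^p\sup_{\tau}\E[|Y_\tau-Y^n_\tau|^p]\to 0$. You instead work directly on the fixed point through the Skorohod condition and close a self-referential estimate on $\Delta K_\tau$ using $\gamma_1+\gamma_2<1$. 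Several of your steps are correct and well isolated: the reduction to predictable stopping times, the quasi-left-continuity of the Brownian and Poisson martingale parts giving $\Delta Y_\tau=-\Delta K_\tau$ a.s., the use of flatness to get $Y_{\tau^-}=S_{\tau^-}$ on $\{\Delta K_\tau>0\}$, the sign of the $\xi$-contribution, and the derivation of $\gamma_1+\gamma_2<1$ from \eqref{smallnessCond}.

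There is, however, a genuine gap in your key estimate. In \eqref{BSDE1} the obstacle is the process $t\mapsto \xi_t+\kappa(Y_t,\P_{Y_t})$, where $\P_{Y_t}$ is the \emph{deterministic flow} of marginal laws; at a predictable time $\tau$ the measure argument is this flow evaluated at the random time, $\P_{Y_t}|_{t=\tau}$ (this is exactly the paper's notation $\P_{Y_s|s=\tau}$), and the left limit of the obstacle involves $\P_{Y_{t^-}}|_{t=\tau}$. Consequently the Lipschitz bound on $\kappa$ produces the random variable $\gamma_2\,\phi(\tau)$, where $\phi(t):=\mathcal{W}_p(\P_{Y_{t^-}},\P_{Y_t})$ is a deterministic function of $t$ composed with $\tau$ --- not the quantity $\gamma_2\,\mathcal{W}_p(\P_{Y_{\tau^-}},\P_{Y_\tau})\le \gamma_2 m$, $m:=(\E[(\Delta K_\tau)^p])^{1/p}$, that you use, which is built from the laws of the \emph{stopped} random variables. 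These two objects do not dominate each other: if, say, $K$ jumps at a fixed time $t_0$ with probability $1/2$ and $\tau$ equals $t_0$ precisely on the event where $K$ does \emph{not} jump (and avoids all jump times otherwise), then $m=0$ while $\phi(\tau)>0$ with positive probability. So the inequality $(1-\gamma_1)\Delta K_\tau\le\gamma_2 m$ is not justified as written. The repair keeps your idea but runs it in two passes: first take $\tau=t$ deterministic, where indeed $\phi(t)\le(\E[|\Delta Y_t|^p])^{1/p}=(\E[(\Delta K_t)^p])^{1/p}$ (the Poisson integral has no fixed-time jumps), so your $L^p$ bootstrap applies verbatim and yields $\Delta K_t=0$ a.s.\ for every $t$; this means $\phi\equiv 0$, i.e.\ $t\mapsto\P_{Y_t}$ is continuous in $\mathcal{W}_p$. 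Then, for a general predictable $\tau$, the measure contribution $\gamma_2\phi(\tau)$ vanishes identically and your pointwise inequality reduces to $(1-\gamma_1)\Delta K_\tau\le 0$, giving $\Delta K_\tau=0$ a.s. With this correction your proof is complete and stands as a genuine alternative to the paper's iteration argument.
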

\begin{proof}
Let $Y^0 \equiv 0$. Define $Y_t^{n+1}:=\widehat{\Phi}(Y^n)_t$, where the operator $\widehat{\Phi}$ is defined by \eqref{eq11}. Since the function $\kappa$ is continuous with respect to $(y,\mu)$ and $\xi_t$ is left upper semicontinuous along stopping times, it follows that the process $h(t,Y_t^0,\mathbb{P}_{Y_t^0})$ is also left upper semicontinuous along stopping times.  Then $Y^1$ corresponds to the first component of the reflected BSDE with driver $f \circ Y^0$, terminal condition $\xi$ and obstacle process $h(t,Y_t^0,\mathbb{P}_{Y_t^0})$. Since $h(t,Y_t^0,\mathbb{P}_{Y_t^0})$ is left upper semicontinuous along stopping times, it follows that $Y^1$ has jumps only at totally inaccessible jumps and thus the map $t \mapsto \mathbb{P}_{Y_t^0}$ is continuous. We deduce  that $h(t,Y_t^1,\mathbb{P}_{Y_t^1})$ is left upper semicontinuous along stopping times, and by the same arguments as above, $Y^2$ has jumps only at totally inaccessible jumps too. By induction, one can show that $Y_t^{n}$ has jumps only at totally inaccessible jumps for all $n \geq 1$.\\
Since the condition $\eqref{smallnessCond}$ is satisfied, by Theorem \ref{mf-case}, the sequence $Y_t^n$ is Cauchy for the norm $\mathbb{L}^p$ and thus converges in $\mathbb{L}^p$ to the component $Y$ of the unique solution of the mean-field reflected BSDE \eqref{BSDE1}.\\
Let $\tau \in \mathcal{T}_0$ be a predictable stopping time. Since $\Delta Y^{n}_\tau=0$ a.s. for all $n$, we obtain
\begin{align}
    \mathbb{E}\left[|\Delta Y_\tau|^p\right]=\mathbb{E}\left[|\Delta Y_\tau-\Delta Y^{n}_\tau|^p\right] \leq 2^p \sup_{\tau \in \mathcal{T}_0}\mathbb{E}\left[|Y_\tau-Y^{n}_\tau|^p\right],
\end{align}
which implies that $\Delta Y_\tau=0$ a.s. By the arbitrariness of the predictable stopping time $\tau \in \mathcal{T}_0$, the result follows.
\end{proof}

\section{Weakly interacting nonlinear Snell envelopes}
We will now discuss the interpretation of the mean-field reflected BSDE with jumps studied in the previous section at the particle level and study the well-posedness of the associated particle system.  To this end, for a given vector $\textbf{x} := (x^1, \ldots, x^n) \in \R^n$, denote the empirical measure associated to $\bf x$ by
$$
L_n [\textbf{x}] := \frac{1}{n} \sum_{k=1}^n \delta_{x^k}.
$$
Let $\{B^i\}_{1\leq i \leq n}$, $\{\tilde N^i\}_{1\leq i \leq n}$ be independent copies of $B$ and $\tilde N$ and denote by $\mathbb{F}^n:=\{\mathcal{F}_t^n\}_{t \in [0,T]}$ the completion of the filtration generated by $\{B^i\}_{1\leq i \leq n}$ and $\{\tilde N^i\}_{1\leq i \leq n}$. Thus, for each $1\le i\le n$, the filtration generated by $(B^i,\tilde{N}^i)$ is a sub-filtration of $\mathbb{F}^n$, and we denote by $\widetilde{\mathbb{F}}^i:=\{\widetilde{\mathcal{F}}^i_t\}$ its completion. Let $\mathcal{T}^n_t$ be the set of $\mathbb{F}^n$ stopping times with values in $[0,T]$. 

Consider a family of weakly interacting processes $\textbf{Y}^n := (Y^{1,n},\ldots, Y^{n,n})$ evolving backward in time as follows: for $i= 1,\ldots,n$,
\begin{align}\label{BSDEParticle}
    \begin{cases}
    Y^{i,n}_t = \xi^{i,n} +\int_t^T f(s,Y^{i,n}_{s},Z^{i,i,n}_{s},U^{i,i,n}_{s}, L_n[\textbf{Y}^n_{s}])ds + K^{i,n}_T - K^{i,n}_t  \\
    	\quad	\quad \quad  - \int_t^T \sum_{j=1}^nZ^{i,j,n}_s dB^j_s -  \int_t^T \int_{R^*} \sum_{j=1}^n U^{i,j,n}_s(e) \tilde N^j(ds,de), \quad  t \in [0,T],\\
    Y^{i,n}_{t} \geq h(t,Y^{i,n}_{t},L_n[\textbf{Y}^n_{t}]),  \quad \forall t \in [0,T], \\
    \int_0^T (Y^{i,n}_{t^-}-h(t^-,Y^{i,n}_{t^-},L_n[\textbf{Y}^n_{t^-}]))dK^{i,n}_t = 0. 
    \end{cases}
\end{align}

\begin{Assumption}\label{Assump-PS} We assume that
\begin{itemize} 
\item $f$ and $h$ are deterministic functions of $(t,y,z,u,\mu)$ and $(t,y,\mu)$, respectively. 
\item  $\xi^{i,n}\in L^p(\mathcal{F}_T^n), \quad i=1,\ldots,n$.
\item  $\xi^{i,n}\ge h(T,\xi^{i,n},L_n[{\bf \xi}^n])\,\, \text{a.s.}\,\,\, i=1,\ldots,n.$
\end{itemize}
\end{Assumption}
For a more general form of the driver $f$ and the obstacle $h$ see Remark \ref{omega-f-h}, below. 

\medskip

\begin{Remark} $ $
\begin{itemize}
    \item[$(1)$] Note that in the system  \eqref{BSDEParticle} we need to use all the noise processes $((B^i,\tilde{N}^i),\,i=1,\ldots,n)$ to describe the dynamics $Y^{i,n}$ of each particle $i$. This is due to the fact that the construction of each individual backward SDE is based on the martingale representation theorem which should involve all the noise processes $((B^i,\tilde{N}^i),\,i=1,\ldots,n)$ and not only individual noise, as is the case for forward SDEs.
    \item[$(2)$] The system \eqref{BSDEParticle} is a 'true' system of weakly interacting processes, and is different from the construction in \cite{bdlp09} and \cite{l14} where the MF-BSDE is obtained a limit of {\em one} BSDE indexed by $n$, driven by one fixed Brownian motion and whose driver is an average of the Wiener measure preserving shift operator applied to measurable functions of the Wiener path, for which the strong law of large numbers for i.i.d. sequences holds.  
\end{itemize}
\end{Remark}

\noindent
Note that we have the inequality
\begin{equation}\label{ineq-wass-1}
\mathcal{W}_p^p(L_n[\textbf{x}], L_{n}[\textbf{y}]) \le \frac{1}{n}\sum_{j=1}^{n}|x_j-y_j|^p,
\end{equation}
and, in particular,
\begin{equation}\label{ineq-wass-2}
\mathcal{W}_p^p(L_{n}[\textbf{x}], L_n[\textbf{0}]) \le \frac{1}{n}\sum_{j=1}^{n}|x_j|^p,
\end{equation}
where we note that $L_n[\textbf{0}]=\frac{1}{n}\sum_{j=1}^{n}\delta_0=\delta_0$.

\noindent Endow the product spaces $\spn_\beta:=\mathcal{S}_\beta^p\times \mathcal{S}_\beta^p \times \cdots \times \mathcal{S}_\beta^p$  and 
$\lpn_{\beta}:=\mathbb{L}_{\beta}^p\times \mathbb{L}_{\beta}^p \times \cdots \times \mathbb{L}_{\beta}^p$
with the respective norms
\begin{equation}\label{n-norm-2}
\|h\|^p_{\spn_\beta}:=\sum_{1 \le i\le n}\|h^i\|^p_{\mathcal{S}_\beta^p},\qquad \|h\|^p_{\lpn_\beta}:=\sum_{1 \le i\le n}\|h^i\|^p_{\mathbb{L}_\beta^p}.
\end{equation}
Note that $\spn_\beta$ and $\lpn_\beta$ are complete metric spaces. We denote by $\spn:=\spn_0,\,\, \lpn:=\lpn_0$.\\
\noindent Let 
$\widetilde{\Phi}: \,\lpn_\beta\longrightarrow \lpn_\beta$ to be the mapping that associates to a process $\textbf{Y}^n:=(Y^{1,n},Y^{2,n},\ldots,Y^{n,n})$ the process $\widetilde{\Phi}(\textbf{Y}^n)=(\widetilde{\Phi}^1(\textbf{Y}^n),\widetilde{\Phi}^2(\textbf{Y}^n),\ldots, \widetilde{\Phi}^n(\textbf{Y}^n))$ defined by the following system: for every $i=1,\ldots n$ and $t\le T$,
\begin{equation}\label{snell-i-2}
\begin{array}{lll} 
\widetilde{\Phi}^i(\Ybf^n)_t := \underset{\tau\in\mathcal{T}^n_t}{\esssup\,} \mathcal{E}^{\textbf{f}^{i} \circ \textbf{Y}^n}_{t,\tau}\left[ h(\tau,Y^{i,n}_{\tau},L_{n}[\textbf{Y}^n_{s}])_{s=\tau})1_{\{\tau<T\}}+\xi^{i,n}1_{\{\tau=T\}}\right],
\end{array}
\end{equation}
where 
$$
\begin{array}{lll}
\textbf{f}^{i} \circ \textbf{Y}^n:[0,T] \times \Omega \times \R \times \R^n \times (\R^*)^n \mapsto \R \\  \qquad\qquad (\textbf{f}^{i} \circ \textbf{Y}^n)(t,\omega, y,z,u)=f(t,\omega, y,z^i,u^i,L_{n}[\textbf{Y}^n_{t}](\omega)), \quad i=1,\ldots,n.
\end{array}
$$

\begin{Theorem}\label{existParticle-2}
Suppose that Assumptions \ref{generalAssump} and \ref{Assump-PS} are in force for some $p\ge 2$. Suppose further that $\gamma_1$ and $\gamma_2$ satisfy \eqref{smallnessCond} i.e.
\begin{align*}\label{smallnessCond-new}
\gamma_1^p+\gamma_2^p<2^{2-\frac{3p}{2}}.
\end{align*}

Then the system (\ref{BSDEParticle}) has a unique solution in $\mathcal{S}^{p,\otimes n} \otimes \mathcal{H}^{p,n\otimes n} \otimes \mathcal{H}_{\nu}^{p,n\otimes n} \otimes \mathcal{S}_{i}^{p,\otimes n}$.
\end{Theorem}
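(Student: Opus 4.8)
The plan is to transpose the three-step proof of Theorem~\ref{mf-case} to the product space $\lpn_\beta$, applying the Banach fixed point theorem to the componentwise map $\widetilde{\Phi}$ defined in \eqref{snell-i-2} (recall $\lpn_\beta$ is a complete metric space). The one structural change is that the deterministic law $\P_{Y_s}$ of the mean-field equation is replaced by the \emph{random} empirical measure $L_n[\textbf{Y}^n_s]$, so the law-Wasserstein estimate \eqref{Wass-property} must be replaced by the pathwise bound \eqref{ineq-wass-1}. The decisive bookkeeping --- and the only genuinely new point --- is that when the $n$ componentwise estimates are added, the resulting factor $n$ cancels exactly the $1/n$ in \eqref{ineq-wass-1}; consequently the contraction constant coincides with the one obtained in Theorem~\ref{mf-case}, and the same smallness condition \eqref{smallnessCond} with the same choice of $\delta$ makes $\widetilde{\Phi}$ a contraction on a small terminal interval.

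First I would verify that $\widetilde{\Phi}$ maps $\lpn_\beta$ into itself. Fixing $\textbf{Y}^n\in\lpn_\beta$, for each $i$ the frozen obstacle $h(\cdot,Y^{i,n}_\cdot,L_n[\textbf{Y}^n_\cdot])$ lies in $\mathcal{S}^p$ by the domination Assumption~\ref{generalAssump}(ii)(b), and the frozen driver $\textbf{f}^i\circ\textbf{Y}^n$ is Lipschitz; hence the reflected BSDE with jumps for particle $i$ --- driven by the whole family $(B^j,\tilde{N}^j)_{1\le j\le n}$ and with terminal value $\xi^{i,n}$ --- admits a unique solution $(\widehat{Y}^{i,n},(Z^{i,j,n})_j,(U^{i,j,n})_j,K^{i,n})\in\mathcal{S}^p\times\mathcal{H}^{p,n}\times\mathcal{H}^{p,n}_\nu\times\mathcal{S}^p_i$, and $\widetilde{\Phi}^i(\textbf{Y}^n)=\widehat{Y}^{i,n}\in\mathcal{S}^p\subset\mathbb{L}^p_\beta$ by the optimal-stopping representation of the $Y$-component (\cite{dqs15},\cite{dqs16}), exactly as in Step~1 of Theorem~\ref{mf-case}. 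For the contraction, fix $\textbf{Y}^n,\bar{\textbf{Y}}^n\in\lpn_\beta$; since $\textbf{f}^i\circ\textbf{Y}^n$ and $\textbf{f}^i\circ\bar{\textbf{Y}}^n$ differ only through their measure argument, I would reproduce the estimate \eqref{ineq-evsn-3} componentwise to obtain
\[
e^{p\beta t}|\widetilde{\Phi}^i(\textbf{Y}^n)_t-\widetilde{\Phi}^i(\bar{\textbf{Y}}^n)_t|^p\le\underset{\tau\in\mathcal{T}^n_t}{\esssup\,}\E[G^i(\tau)\mid\mathcal{F}^n_t],
\]
where $G^i(\tau)$ has the same shape as $G(\tau)$ in Theorem~\ref{mf-case} with each $\mathcal{W}_p^p(\P_{Y_s},\P_{\bar{Y}_s})$ replaced by $\mathcal{W}_p^p(L_n[\textbf{Y}^n_s],L_n[\bar{\textbf{Y}}^n_s])$; Lemma~D.1 of \cite{KS98} together with Fatou's lemma then gives $\sup_\tau\E[e^{p\beta\tau}|\widetilde{\Phi}^i(\textbf{Y}^n)_\tau-\widetilde{\Phi}^i(\bar{\textbf{Y}}^n)_\tau|^p]\le\sup_\tau\E[G^i(\tau)]$ for each $i$.

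The heart of the argument is the summation over $i$. Because $L_n[\textbf{Y}^n_s]$ is random, I would apply \eqref{ineq-wass-1} pathwise, $\mathcal{W}_p^p(L_n[\textbf{Y}^n_s],L_n[\bar{\textbf{Y}}^n_s])\le\tfrac{1}{n}\sum_{j=1}^n|Y^{j,n}_s-\bar{Y}^{j,n}_s|^p$, and only then take expectations. Bounding $\E[G^i(\tau)]$ uniformly in $\tau$ (the running integral by extending it to $\int_{T-\delta}^T$ and invoking Fubini, the terminal terms by the relevant norms) yields
\[
\sup_\tau\E[G^i(\tau)]\le 2^{\frac{p}{2}-1}\delta^{\frac{p}{2}}\eta^p C_f^p\,\tfrac{1}{n}\|\textbf{Y}^n-\bar{\textbf{Y}}^n\|^p_{\lpn_\beta}+2^{\frac{p}{2}-1}2^{p-1}\Big(\gamma_1^p\|Y^{i,n}-\bar{Y}^{i,n}\|^p_{\mathbb{L}^p_\beta}+\gamma_2^p\,\tfrac{1}{n}\|\textbf{Y}^n-\bar{\textbf{Y}}^n\|^p_{\lpn_\beta}\Big).
\]
Summing over $i=1,\dots,n$, the two $\tfrac{1}{n}$-terms (the running integral and the $\gamma_2$-contribution) each pick up a factor $n$, while $\sum_{i=1}^n\|Y^{i,n}-\bar{Y}^{i,n}\|^p_{\mathbb{L}^p_\beta}=\|\textbf{Y}^n-\bar{\textbf{Y}}^n\|^p_{\lpn_\beta}$, so that
\[
\|\widetilde{\Phi}(\textbf{Y}^n)-\widetilde{\Phi}(\bar{\textbf{Y}}^n)\|^p_{\lpn_\beta}\le\alpha\,\|\textbf{Y}^n-\bar{\textbf{Y}}^n\|^p_{\lpn_\beta},\qquad\alpha:=2^{\frac{p}{2}-1}\delta^{\frac{p}{2}}\eta^p C_f^p+2^{\frac{p}{2}-1}2^{p-1}(\gamma_1^p+\gamma_2^p),
\]
which is exactly the constant of Theorem~\ref{mf-case}. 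Under \eqref{smallnessCond} one may choose $\delta$ precisely as there to make $\alpha<1$, so $\widetilde{\Phi}$ is a contraction on $\lpn_\beta$ over $[T-\delta,T]$ and admits a unique fixed point.

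Finally I would run the analogue of Step~3. The fixed point $\textbf{Y}^n$ on $[T-\delta,T]$ makes each $Y^{i,n}$ coincide with the $Y$-component of the reflected BSDE with jumps driven by all the noises, with obstacle $h(\cdot,Y^{i,n}_\cdot,L_n[\textbf{Y}^n_\cdot])$ and driver $f(\cdot,\cdot,\cdot,\cdot,L_n[\textbf{Y}^n_\cdot])$ now frozen; this produces the associated $(Z^{i,j,n},U^{i,j,n},K^{i,n})$, whose uniqueness follows from the uniqueness of the nonlinear Doob-Meyer decomposition of the $g$-supermartingale $Y^{i,n}$, exactly as in Theorem~\ref{mf-case}. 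Pasting the solutions over the successive intervals $[T-(m+1)\delta,T-m\delta]$, each time using $\textbf{Y}^n_{T-m\delta}$ as terminal data, delivers the unique solution on $[0,T]$ in $\mathcal{S}^{p,\otimes n}\otimes\mathcal{H}^{p,n\otimes n}\otimes\mathcal{H}_{\nu}^{p,n\otimes n}\otimes\mathcal{S}_{i}^{p,\otimes n}$. I expect no real obstacle beyond the $n$-versus-$1/n$ cancellation highlighted above: every other estimate transfers verbatim from Theorem~\ref{mf-case}, the added generality being only that the $i$-th reflected BSDE now carries the full vector of martingale integrands $(Z^{i,j,n},U^{i,j,n})_{1\le j\le n}$.
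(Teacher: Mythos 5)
Your proposal is correct and takes essentially the same route as the paper's proof: well-definedness of $\widetilde{\Phi}$ on $\lpn_\beta$ via the $\mathcal{S}^p$ bound on the frozen obstacle, a componentwise contraction estimate on $[T-\delta,T]$ in which the pathwise bound \eqref{ineq-wass-1} and the summation over $i$ produce exactly the $n$ versus $1/n$ cancellation, and finally uniqueness of $(Z,U,K)$ from the nonlinear Doob--Meyer decomposition followed by pasting over successive intervals. Your explicit bookkeeping even recovers the sum form $\alpha = 2^{p/2-1}\delta^{p/2}\eta^p C_f^p + 2^{3p/2-2}(\gamma_1^p+\gamma_2^p)$ of the contraction constant, which is the form actually consistent with the smallness condition \eqref{smallnessCond} and the paper's choice of $\delta$.
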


\medskip

\begin{proof}
\uline{\textit{Step 1.}} We first show that $\widetilde{\Phi}$  is a well-defined map from $\lpn_{\beta}$ to itself. To this end, we linearize the mapping $h$ as follows: for $i=1,\dots, n$ and $0 \leq s\le T$,
$$
\begin{array}{lll}
h(s,Y^{i,n}_s,L_{n}[\textbf{Y}^n_s])=h(s,0,L_{n}[\textbf{0}])+a^i_h(s)Y^{i,n}_s+b^i_h(s)\mathcal{W}_p(L_{n}[\textbf{Y}^n_s], L_{n}[\textbf{0}]),
\end{array}
$$
where $a^i_h(\cdot)$ and $b^i_h(\cdot)$ are adapted processes given by 
\begin{equation}
\left\{\begin{array}{lll}
a^i_h(s):=\frac{h(s,Y^{i,n}_s,L_{n}[\textbf{Y}^n_s])-h(s,0,L_{n}[\textbf{Y}^n_s])}{Y^{i,n}_s}\ind_{\{Y^{i,n}_s\neq 0\}},\\ \\  b^i_h(s):=\frac{h(s,0,L_{n}[\textbf{Y}^n_s])-h(s, 0,L_{n}[\textbf{0}])}{\mathcal{W}_p(L_{n}[\textbf{Y}^n_s], L_{n}[\textbf{0}])}\ind_{\{\mathcal{W}_p(L_{n}[\textbf{Y}^n_s], L_{n}[\textbf{0}])\neq 0\}}.
\end{array}
\right.
\end{equation}
and which, by the Lipschitz continuity of $h$, satisfy
$|a^i_h(\cdot)|\le \gamma_1,\,\,\, |b^i_h(\cdot)|\le \gamma_2.$\\
By the estimates (A.1) on BSDEs established in Proposition \ref{estimates}, below, we obtain, for any stopping time $\tau \in \mathcal{T}^n_t$ the following:
\begin{align}
    &\left|\mathcal{E}_{t,\tau}^{\textbf{f}^{i} \circ \textbf{Y}^n }\left[\xi^{i,n}1_{\{\tau=T\}}+h(\tau,Y^{i,n}_{\tau},L_{n}[\textbf{Y}^n_{s}])_{s=\tau})1_{\{\tau<T\}}\right]\right|^p\nonumber \\
    &\leq 2^{\frac{p}{2}-1}\left(\mathbb{E}\left[e^{p \beta(\tau-t)}\left|\xi^{i,n}1_{\{\tau=T\}}+h(\tau,Y^{i,n}_{\tau},L_{n}[\textbf{Y}^n_{s}])_{s=\tau})1_{\{\tau<T\}}\right|^p|\mathcal{F}_t \right] \right. \nonumber \\
    & \quad \quad \quad+\left. \eta^p \mathbb{E}\left[\left\{\int_t^\tau e^{2 \beta (s-t)}|f|^2(s,0,0,0,L_{n}[\textbf{Y}^n_{s}])ds\right\}^{p/2}|\mathcal{F}_t\right]\right),
\end{align}
with $\eta$, $\beta>0$ such that  $\eta \leq \frac{1}{C_f^2}$ and  $\beta \geq 2 C_f+\frac{3}{\eta}$. Moreover, since $f(t,0,0,0,\delta_0) \in \mathcal{H}^{p,1}$, $\xi^{i,n}\in L^p(\mathcal{F}_T)$, $(h(t,0,\delta_0))_{0\leq t\leq T}\in \mathcal{S}_\beta^p$ and $(Y^{i,n})_{1 \leq i \leq n}\in \lpn_\beta$, the non-negative c{\`a}dl{\`a}g process $(M^{i,\beta}_t)_{0 \leq t\leq T}$ defined by
\begin{align*}
    M^{i,\beta}_t := & \left(e^{\beta t}|h(t,0,L_{n}[\textbf{0}])| + \gamma_1 e^{\beta t} |Y^{i,n}_t| + \gamma_2 e^{\beta t} \mathcal{W}_p(L_{n}[\textbf{Y}^n_t], L_{n}[\textbf{0}])+e^{\beta T}|\xi^{i,n}|\ind_{\{t=T\}}\right)^p \\
    & +\eta^{p} \left(\int_0^t \left\{e^{\beta s}|f(s,0,0,0,L_{n}[\textbf{0}])|+C_f e^{\beta s} \mathcal{W}_p(L_{n}[\textbf{Y}^n_s], L_{n}[\textbf{0}])\right\}^2ds\right)^{p/2}
\end{align*}
belongs to $\mathbb{L}^{1}$. One can easily show that for $i=1,\dots,n,$ it follows that $\widetilde{\Phi}(\textbf{Y}^n) \in \lpn_\beta$.\\
\vspace{2mm}

\uline{\textit{Step 2.}} We now show that $\widetilde{\Phi}$ is a contraction on the time interval $[T-\delta,T]$.

\medskip
Fix $\mathbf{Y}^n=(Y^{1,n},\dots,Y^{n,n}),\bar{\mathbf{Y}}^n=(\bar Y^{1,n},\dots,\bar Y^{n,n}) \in \lpn_\beta$, $(\hat{Y},\tilde{Y}) \in (\mathcal{S}_\beta^{p})^2$, $(\hat{Z},\tilde{Z}) \in (\mathcal{H}^{p,1})^2$, $(\hat{U},\tilde{U}) \in (\mathcal{H}_\nu^{p})^2$. By the Lipschitz continuity of $f$ and $h$, we get
  \begin{eqnarray}\label{f-h-lip-4}
&&|f(s,\hat{Y}_s,\hat{Z}_s,\hat{U}_s, L_{n}[\textbf{Y}^n_s])-f(s,\tilde{Y}_s,\tilde{Z}_s,\tilde {U}_s,L_{n}[\bar{\textbf{Y}}^n_s])| \le  C_f(|\hat{Y}_s-\tilde{Y}_s|+|\hat{Z}_s-\tilde{Z}_s|\nonumber \\
&&\quad \quad\quad \quad \quad \quad\quad \quad\quad \quad\quad \quad\quad \quad\quad \quad \quad\quad \quad\quad \quad\quad\quad \quad +|\hat{U}_s-\tilde{U}_s|_\nu +\mathcal{W}_p(L_{n}[\textbf{Y}^n_s], L_{n}[\bar{\textbf{Y}}^n_s])), \nonumber\\ 
&&|h(s,Y^{i,n}_s,L_{n}[\textbf{Y}^n_s])-h(s,\bar{Y}^{i,n}_s,L_{n}[\bar{\textbf{Y}}^n_s])| \le  \gamma_1|Y^{i,n}_s-\bar{Y}^{i,n}_s|+\gamma_2\mathcal{W}_p(L_{n}[\textbf{Y}^n_s], L_{n}[\bar{\textbf{Y}}^n_s]).
\end{eqnarray}
By \eqref{ineq-wass-1}, we have
\begin{equation}\label{ineq-wass-1-1-3}
\mathcal{W}_p^p(L_{n}[\textbf{Y}^n_s], L_{n}[\bar{\textbf{Y}}^n_s])) \le \frac{1}{n}\sum_{j=1}^n |Y_s^{j,n}-\bar{Y}_s^{j,n}|^p.
\end{equation}
Then, using the estimates (A.1) again, for any $t\leq T$ and $i=1,\ldots, n$, we have  
\begin{equation*}\begin{array} {lll}
\quad |\widetilde{\Phi}^i(\Ybf^n)_t-\widetilde{\Phi}^i(\bar{\Ybf}^n)_t|^{p} \\
=|\underset{\tau\in\mathcal{T}^n_t}{\esssup\,}\mathcal{E}_{t,\tau}^{\textbf{f}^{i} \circ \Ybf^n }[h(\tau,Y^{i,n}_\tau,L_{n}[\Ybf^n_s]_{s=\tau})\ind_{\{\tau<T\}}+\xi^{i,n}\ind_{\{\tau=T\}}]
 \\ \quad -\,\,\underset{\tau\in\mathcal{T}^n_t}{\esssup\,}\mathcal{E}_{t,\tau}^{\textbf{f}^{i} \circ \bar{\Ybf}^n }[h(\tau,\bar{Y}^{i,n}_\tau,L_{n}[\bar{\Ybf}^n_s]_{s=\tau})1_{\{\tau<T\}}+\xi^{i,n}\ind_{\{\tau=T\}}]|^{p}
\\    \le \underset{\tau\in\mathcal{T}^n_t}{\esssup\,}\left|\mathcal{E}_{t,\tau}^{\textbf{f}^{i} \circ \Ybf}[h(\tau,Y^{i,n}_\tau,L_{n}[\Ybf^n_s]_{s=\tau})\ind_{\{\tau<T\}}+\xi^{i,n}\ind_{\{\tau=T\}}] \right. \\ \left. \qquad\qquad\qquad\qquad - \mathcal{E}_{t,\tau}^{\textbf{f}^{i} \circ \bar{\Ybf}^n }[h(\tau,\bar{Y}^{i,n}_\tau,L_{n}[\bar{\Ybf}^n_s]_{s=\tau})1_{\{\tau<T\}}+\xi^{i,n}\ind_{\{\tau=T\}}] \right|^{p}\\
\le \underset{\tau\in\mathcal{T}^n_t}{\esssup\,}2^{p/2-1}\E[\eta^p\left(\int_t^\tau e^{2 \beta (s-t)}  |(\textbf{f}^{i} \circ \Ybf^n) (s,\widehat{Y}^{i,\tau}_s,\widehat{Z}^{i,\tau}_s,\widehat{U}^{i,\tau}_s)-(\textbf{f}^{i} \circ \bar{\Ybf}^n) (s,\widehat{Y}^{i,\tau}_s, \widehat{Z}^{i,\tau}_s,\widehat{U}^{i,\tau}_s)|^2 ds \right)^{p/2}  \\ \qquad\qquad \qquad +e^{p \beta(\tau-t)}|h(\tau,Y^{i,n}_\tau,L_{n}[\Ybf^n_s]_{s=\tau})-h(\tau,\bar{Y}^{i,n}_\tau, L_{n}[\bar{\Ybf}^n_s]_{s=\tau})|^p
|\mathcal{F}_t] \\
\le \underset{\tau\in\mathcal{T}^n_t}{\esssup\,} 2^{p/2-1}\E[\eta^p \left(\int_t^\tau e^{2 \beta (s-t)} |f(s,\widehat{Y}^{i,\tau}_s,\widehat{Z}^{i,i,\tau}_s,\widehat{U}^{i,i,\tau}_s,
L_{n}[\Ybf^n_s])-f(s,\widehat{Y}^{i,\tau}_s,\widehat{Z}^{i,i,\tau}_s,\widehat{U}^{i,i,\tau}_s, L_{n}[\bar{\Ybf}^n_s])|^2 ds \right)^{p/2} \\ \qquad\qquad \qquad +e^{p \beta(\tau-t)}|h(\tau,Y^{i,n}_\tau,L_{n}[\Ybf^n_s]_{s=\tau})-h(\tau,\bar{Y}^{i,n}_\tau, L_{n}[\bar{\Ybf}^n_s]_{s=\tau})|^p
|\mathcal{F}_t],
 \end{array}
\end{equation*}
where $(\widehat{Y}^{i,\tau},\widehat{Z}^{i,\tau},\widehat{U}^{i,\tau})$ is the solution of the BSDE associated with driver $\textbf{f}^{i}\circ \bar{\Ybf}^n$, terminal time $\tau$ and terminal condition $h(\tau,\bar{Y}^{i,n}_\tau,L_{n}[\bar{\Ybf}^n_s]_{s=\tau})\ind_{\{\tau<T\}}+\xi^{i,n}\ind_{\{\tau=T\}}$.

\noindent Therefore, using \eqref{f-h-lip-4} and \eqref{ineq-wass-1-1-3}, we have, for any $t\le T$ and $i=1,\ldots, n$,
\begin{equation}\label{ineq-evsn-2}\begin{array} {ll}
e^{p \beta t}|\widetilde{\Phi}^i(\Ybf^n)_t-\widetilde{\Phi}^i(\bar{\Ybf}^n)_t|^p \le \underset{\tau\in\mathcal{T}^n_t}{\esssup\,}2^{p/2-1}\E\left[ \int_t^{\tau} \delta^{\frac{p-2}{2}} \eta^{p}C_f^{p}\left( \frac{1}{n}\sum_{j=1}^n e^{p\beta s}|Y_s^{j,n}-\bar{Y}_s^{j,n}|^p \right) ds \right.  \\  \left. \qquad \qquad +\left(\gamma_1 e^{p\beta \tau}|Y^{i,n}_{\tau}-\bar{Y}^{i,n}_{\tau}|+ \gamma_2 \left(\frac{1}{n}\sum_{j=1}^ne^{p\beta \tau}|Y_{\tau}^{j,n}-\bar{Y}_{\tau}^{j,n}|^p \right)^{\frac{1}{p}}\right)^{p}|\mathcal{F}_t\right].
\end{array}
\end{equation} 
Next, let $\delta >0$, $t\in [T-\delta,T]$ and with $\alpha:=\max(2^{p/2-1}\delta^{p/2} \eta^{p}C^{p}_f+2^{3p/2-2}\gamma^p_1, 2^{p/2-1} \delta^{p/2} \eta^{p}C^{p}_f+2^{3p/2-2}\gamma^p_2)$.
 By applying the same arguments used in Step 2 from the proof of Theorem \ref{mf-case}, and the definition \eqref{n-norm-2}, we have
\begin{equation*}\begin{array} {lll}
\|\widetilde{\Phi}(\Ybf^n)-\widetilde{\Phi}(\bar{\Ybf}^n)\|^p_{\lpn_\beta[T-\delta,T]}\le \alpha \|\Ybf^n-\bar{\Ybf}^n\|^p_{\lpn_\beta[T-\delta,T]}.
\end{array}
\end{equation*}
Hence, if $\gamma_1$ and $\gamma_2$ satisfy
$$
\gamma_1^p+\gamma_2^p<2^{2-3p/2},
$$
we can choose 
$$
0<\delta<\left\{\frac{1}{2^{p/2-1}\eta^p C_f^p}\left(1-2^{3p/2-2}(\gamma^p_1+\gamma^p_2)\right) \right\}^{2/p}
$$ 
to make $\widetilde{\Phi}$ a contraction on $\lpn_\beta([T-\delta,T])$, i.e. $\widetilde{\Phi}$ admits a unique fixed point over $[T-\delta,T]$. Moreover, in view of Assumption \ref{generalAssump} (ii)(a), we get that $\Ybf^n\in \spn([T-\delta,T])$.

\medskip
\underline{\textit{Step 3.}} The system \eqref{BSDEParticle} has a unique solution.
We first denote 
\begin{eqnarray*}
&&\bm{\xi}^n := (\xi^{1,n}, \dots, \xi^{n,n});\quad \textbf{Z}^{i,n} := (Z^{i,1,n}, \dots, Z^{i,n,n})_{i=1,\ldots,n}; \\
&&\textbf{U}^{i,n} := (U^{i,1,n}, \dots, U^{i,n,n})_{i=1,\dots,n};\quad \textbf{K}^n := (K^{1,n}, \dots, K^{n,n}). 
\end{eqnarray*}
Given $\textbf{Y}^n = (Y^{1,n},\ldots,Y^{n,n})\in \lpn_\beta$ the solution obtained in \textit{Step 2}, let $(\hat{\textbf{Y}}^n,\hat{\textbf{Z}}^n,\hat{\textbf{U}}^n,\hat{\textbf{K}}^n) \in \mathcal{S}^{p,\otimes n} \otimes \mathcal{H}^{p,n\otimes n} \otimes \mathcal{H}_{\nu}^{p,n\otimes n} \otimes \mathcal{S}_{i}^{p,\otimes n}$ be the unique solution of the RBSDE system over $[T-\delta,T]$, with barrier $(h(t, Y^{i,n}, L_{n}(\textbf{Y}^n)))_{i=1, \ldots, n}$ (which belongs to $\mathcal{S}^{p,\otimes n}$) and driver $(\textbf{f}^{i} \circ \textbf{Y}^n)_{i=1 \dots n}$. Then, for each $i=1,\dots, n$, we get
$$
\hat{Y}^{i,n}_t=\underset{\tau\in\mathcal{T}^n_t}{\esssup\,}\mathcal{E}_{t,\tau}^{\textbf{f}^{i} \circ \textbf{Y}^n}\left[h(\tau,\textbf{Y}^n_{\tau})\ind_{\{\tau<T\}}+ \xi^{i,n} \ind_{\{\tau=T\}}\right].
$$
By the fixed point argument from \textit{Step 2}, we have $\hat{\textbf{Y}}^n=\textbf{Y}^n$ a.s., where the equality is component-wise. This implies that $\textbf{Y}^n \in \mathcal{S}^{p,\otimes n}$ and for all $1 \leq i \leq n$ and $T-\delta \leq t \leq T$,
\begin{align}
Y^{i,n}_t &= \xi^{i,n} +\int_t^T f(s,Y^{i,n}_{s},  \hat{Z}^{i,i,n}_s,\hat{U}^{i,i,n}_s, L_{n}[\textbf{Y}^n_{s}])ds + \hat{K}^{i,n}_T - \hat{K}^{i,n}_t -\int_t^T \sum_{j=1}^n \hat{Z}^{i,j,n}_s dB^j_s \nonumber \\ &-\int_t^T \int_{R^*} \sum_{j=1}^n \hat{U}^{i,j,n}_s(e) \tilde N^j(ds,de).
\end{align}

Therefore, we obtain existence of a solution.
Note that, for each $i$, the process $Y^{i}_t$ is a \textit{nonlinear} $\textbf{f}^{i} \circ \textbf{Y}^n$-supermartingale. Hence, by the uniqueness of the  \textit{nonlinear} Doob-Meyer decomposition, we get uniqueness of the associated processes $(\hat{\bf{Z}}^{i,n},\hat{\bf{U}}^{i,n},\hat{K}^{i,n})$. This yields existence and uniqueness of \eqref{BSDEParticle} on the time interval $[T-\delta, T]$ in this general case.

Applying the same reasoning on each time interval $[T-(j+1)\delta,T-j\delta]$, $1 \leq j \leq m$, with a similar dynamics but terminal condition $\textbf{Y}^{i}_{T-j\delta}$ at time $T-j\delta$, we build recursively, for $j=1$ to $n$, a solution $(\textbf{Y}^i,\textbf{Z}^i,\textbf{U}^i,\textbf{K}^i)$ on each time interval $[T-(j+1)\delta,T-j\delta].$ Pasting properly these processes, we naturally derive a unique solution $(\textbf{Y},\textbf{Z},\textbf{U},\textbf{K})$ satisfying (\ref{BSDEParticle}) on the full time interval $[0,T]$. \qed
\end{proof}


\section{Convergence of the particle system and propagation of chaos}\label{convergence}
\noindent This section is concerned with the convergence of the sequence of processes $(Y^{i,n},Z^{i,n},U^{i,n},K^{i,n})$, solutions of the particle system \eqref{BSDEParticle}, to the independent copies $(Y^{i},Z^{i},U^{i},K^{i})$ of the solution of the mean-field Reflected BSDEs \eqref{BSDE1}, as well as a related propagation of chaos result.

\subsection{Preliminaries}
\medskip
We first introduce the Wasserstein metric on $\mathbb{D}:=D([0,T],\R)$ and recall some of its properties along with a law of large numbers result for $\mathbb{D}$-valued i.i.d. random variables to be used in our proofs towards the propagation of chaos property.\\

\noindent \textbf{Wasserstein metric on
$\mathbb{D}:=D([0,T],\R)$}\\

\noindent Let $\mathbb{D}=D([0,T],\R)$ be the set of c{\`ad}l{\`a}g functions i.e. the space of functions from $[0, T]$ to $\R$ which are right continuous with left limits at each $t\in [0,T)$ and are left continuous at time $T$. Let $\Lambda$ denote the class of strictly increasing, continuous mappings of $[0,T]$ onto itself. If $\lambda \in \Lambda$, then $\lambda(0) = 0$ and $\lambda(T) = T$. Let $d^o(x,y)$ be the infimum of those positive $\epsilon$ for which $\Lambda$ contains some $\lambda$ such that $||\lambda||^o:=\underset{0\le s<t\le T}{\sup}|\log \frac{\lambda (t)-\lambda (s)}{t-s}|\leq \epsilon$ and for $x,y \in \mathbb{D}$,
$$\underset{0\le t\le T}{\sup}|x(t)-y(\lambda (t))|=\underset{0\le t\le T}{\sup}|x(\lambda^{-1}(t))-y(t)|<\epsilon. $$ In other words, let
\begin{align}\label{d-o}
    d^o(x,y) = \underset{\lambda \in \Lambda}{\inf}\{||\lambda||^o\vee \underset{0\le t\le T}{\sup}|x(t)-y(\lambda(t))|\}.
\end{align}
The space $\mathbb{D}$ endowed with the Skorohod metric $d^o$ is a Polish space. 
By taking $\lambda$ to be identity function, we obtain $\|\lambda\|^0=0$, and then
$$
d^o(x,y)\le |x-y|_T,
$$
where $|z|_t:=\underset{0\le s\le t}{\sup}|z(s)|$, i.e.  $|\cdot|_T$ induces a stronger topology than $d^0$. 

\noindent To derive \eqref{x-y-esti} below, we need the metric $d$ which is equivalent to $d^o$  and for which $(\mathbb{D},d)$ is  separable but not complete, defined as (see (14.12)-(14.13) in Billingsley \cite{b68})
\begin{align}\label{d}
    d(x,y) = \underset{\lambda \in \Lambda}{\inf} 
    \{ \underset{0\le t\le T}\sup \,|\lambda(t)-t|\vee \underset{0\le t\le T}\sup\,|x(t)-y(\lambda(t))|\}.
\end{align}
In view of \eqref{d-o} and \eqref{d}, we have, for any $(x,y)\in \mathbb{D}\times\mathbb{D}$,
\begin{equation}\label{d-d-o}
  d(x,y)\le |x-y|_T,\quad  d(x,y)\le T[e^{d^o(x,y)}-1],
\end{equation}
where the last inequality follows from the fact that, by using the continuity of the map $\lambda$, we get
$$\begin{array}{lll}
\underset{0\le t\le T}\sup \,|\lambda(t)-t|=\underset{0 < t\le T}\sup \,|\lambda(t)-t|=\underset{0 < t\le T}\sup \,t|\frac{\lambda(t)-0}{t-0}-1| \\ \qquad\qquad\qquad\quad\le T\underset{0 < t\le T}\sup \,|\frac{\lambda(t)-0}{t-0}-1|\le T[e^{\|\lambda\|^o}-1],
\end{array}
$$
where we have used the inequality $|a-1|\le e^{|\log{a}|}-1$ for $a>0$. 

Following Billingsley \cite{b68}, (14.6) and (14.7), for $x\in \mathbb{D}$, $\delta >0$ and $0\le u\le v\le T$, put
$$\begin{array}{ll}
w_x([u,v])=\sup\{|x(t)-x(s)|:\, s,t \in[u,v]\}, \\
w^{\prime}_x(\delta)=\underset{\{t_i\}}{\inf}\,\underset{1\le i\le r}{\max} \,w_x([t_{i-1},t_i)),
\end{array}
$$
where the infimum  extends over the finite sets $\{t_i\}$ of points satisfying 
$$
\left\{ \begin{array}{lll}
0=t_0<t_1<\cdots<t_r=T, \\
t_i-t_{i-1}>\delta,\quad i=1,\ldots, r.
\end{array}
\right.
$$
From this definition, it follows that
\begin{equation}\label{w-w}
w_x([u,v))\le 2 w^{\prime}_x(v-u)+\underset{u< t\le v}{\sup}\,|x(t)-x(t^-)|,\quad \text{for all}\,\, 0\le u\le v\le T.
\end{equation}
Furthermore, for any $x,y \in\mathbb{D}$ and any $\lambda\in \Lambda$, it holds that
$$
|x(t)-y(t)|\le |x(t)-y(\lambda(t))|+|y(\lambda(t))-y(t)|.
$$
The definition \eqref{d-o}, which implies that for any $\delta >0$, for any $x$, $y \in \mathbb{D}$ such that $d^o(x,y)\le \ln(1+ \delta/T)$ we get  $d(x,y)\le \delta$ by \eqref{d-d-o}, which together with the above relation, yields
$$
|x(t)-y(t)|\le \delta +w_y([t-\delta,t+\delta)).
$$
Thus, in view of \eqref{w-w}, if $d^o(x,y)\le \ln(1+ \delta/T)$ then
\begin{equation}\label{x-y-esti}
|x(t)-y(t)|\le \delta+ 2 w^{\prime}_y(2\delta)+\underset{t-\delta< s\le t+\delta}{\sup}\,|y(s)-y(s^-)|.
\end{equation}

The space of probability measures on the space $\mathbb{D}$ is denoted by $\mathcal{P}(\mathbb{D})$. We define the $p$-Wasserstein metric between two probability measures $P$ and $Q$ on $(\mathbb{D},|\cdot|_T)$ as
\begin{equation}\label{W-filt}
D_t(P,Q):=\inf\left\{\left(\int_{\mathbb{D}\times \mathbb{D}}|x-y|_t^p R(dx,dy)\right)^{1/p}\right\},\quad 0\le t\le T,
\end{equation}
over $R\in\mathcal{P}(\mathbb{D}\times \mathbb{D})$ with marginals $P$ and $Q$. 

\noindent Let $\mathcal{P}_p(\mathbb{D})$ denote the space of probability measures $Q$ on $(\mathbb{D},|\cdot|_T)$ with finite $p$th-moments: $$
\|Q\|_p^p:=\int_{\mathbb{D}}|w|^p_TQ(dw)<+\infty.
$$ 
For $P, Q\in \mathcal{P}_p(\mathbb{D})$, we have
\begin{equation}\label{ordering}
D_s(P,Q)\le D_t(P,Q),\quad 0\le s\le t.
\end{equation}
Moreover,  for $P, Q\in \mathcal{P}_p(\mathbb{D})$ with time marginals $P_t:=P\circ \omega^{-1}(t)$ and $Q_t:=Q\circ \omega^{-1}(t)$ ($\omega$ being the coordinate process in $\mathbb{D}$), the $p$-Wasserstein  distance between $P_t$ and $Q_t$ satisfies
\begin{equation}\label{margine-1}
\mathcal{W}_p(P_t,Q_t) \le D_t(P,Q),\quad 0\le t\le T.
\end{equation}
In particular,
\begin{equation}\label{margine-2}
\underset{0\le t\le T}{\sup}\mathcal{W}_p(P_t,Q_t)\le D_T(P,Q).
\end{equation}
Endowed with the $p$-Wasserstein metric $D_T$,  $\mathcal{P}_p(\mathbb{D})$ is a complete metric space.

The $p$-Wasserstein metric for probability measures on $(\mathbb{D},d^o)$ is defined by
$$
D_T^o(P,Q):=\inf\left\{\left(\int_{\mathbb{D}\times \mathbb{D}}d^o(x,y)^p R(dx,dy)\right)^{1/p}\right\}
$$
over $R\in\mathcal{P}(\mathbb{D}\times \mathbb{D})$ with marginals $P$ and $Q$. Endowed with the $p$-Wasserstein metric $D^o_T$, $\mathcal{P}(\mathbb{D})$ is a Polish space.

\noindent Due to the relation $d^o(x,y)\le |x-y|_T$,
we have
$$
D_T^o(P,Q)\le D_T(P,Q),\quad P,Q\in \mathcal{P}_p(\mathbb{D}),
$$
which means that the metric $D_T$ is stronger than $D_T^o$. 

\medskip

We end these preliminaries with the following law of large numbers.

\medskip \noindent
\begin{Lemma}\label{LLN}
Let $P\in\mathcal{P}_p((\mathbb{D},d^o))$ and $L^n(\zeta):=\frac{1}{n}\sum_{i=1}^n \delta_{\zeta^i}$ denote the empirical measure associated with the sequence of i.i.d. $\mathbb{D}$-valued random variables $\zeta:=(\zeta^i)_{i\ge 1}$ with probability law $P$. Then, 
\begin{equation}\label{GC-1}
\text{for all}\,\,\, n\ge 1,\quad \E[D_T^o(L^n(\zeta),P)^p]\le 2^{p} D_T^o(P,\delta_0)^p\quad\text{and} \quad \lim_{n\to\infty}\E[D_T^o(L^n(\zeta),P)^p]=0.
\end{equation}
\end{Lemma}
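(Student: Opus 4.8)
The plan is to treat the two assertions separately. For the uniform bound I would exploit that the coupling of any measure with a Dirac mass is unique, whence $D_T^o(\mu,\delta_0)^p=\int_{\mathbb{D}} d^o(x,\mathbf{0})^p\,\mu(dx)$ for every $\mu\in\mathcal{P}_p((\mathbb{D},d^o))$, where $\mathbf{0}$ denotes the null path. Taking $\mu=L^n(\zeta)$ gives $D_T^o(L^n(\zeta),\delta_0)^p=\frac1n\sum_{i=1}^n d^o(\zeta^i,\mathbf{0})^p$, whose expectation equals $\E[d^o(\zeta^1,\mathbf{0})^p]=D_T^o(P,\delta_0)^p$ since the $\zeta^i$ are i.i.d.\ with law $P$ (this is finite because $P\in\mathcal{P}_p$). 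Combining the triangle inequality for $D_T^o$ with the convexity estimate $(a+b)^p\le 2^{p-1}(a^p+b^p)$ and taking expectations then yields
\[
\E[D_T^o(L^n(\zeta),P)^p]\le 2^{p-1}\big(\E[D_T^o(L^n(\zeta),\delta_0)^p]+D_T^o(P,\delta_0)^p\big)=2^p D_T^o(P,\delta_0)^p,
\]
which is the first claim.

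For the limit I would first establish $D_T^o(L^n(\zeta),P)\to 0$ almost surely and then upgrade this to convergence of the $p$-th powers in mean. Because $(\mathbb{D},d^o)$ is a Polish space, Varadarajan's theorem gives $L^n(\zeta)\to P$ weakly, $\P$-a.s. Simultaneously, the classical strong law of large numbers applied to the integrable real variables $d^o(\zeta^i,\mathbf{0})^p$ gives $\int d^o(x,\mathbf{0})^p\,L^n(\zeta)(dx)\to\int d^o(x,\mathbf{0})^p\,P(dx)$, $\P$-a.s. Weak convergence together with convergence of the $p$-th moments is exactly the characterization of convergence in the $p$-Wasserstein distance on a Polish space, so $D_T^o(L^n(\zeta),P)\to 0$ holds $\P$-a.s.

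To pass to convergence in expectation I would invoke a generalized dominated convergence theorem (Pratt's lemma) rather than search for a fixed integrable dominator. Writing $X_n:=D_T^o(L^n(\zeta),P)^p$, the same triangle/convexity estimate gives $0\le X_n\le Y_n:=2^{p-1}\big(\frac1n\sum_{i=1}^n d^o(\zeta^i,\mathbf{0})^p+D_T^o(P,\delta_0)^p\big)$, where $Y_n\to Y:=2^p D_T^o(P,\delta_0)^p$ a.s.\ by the strong law of large numbers, while $\E[Y_n]=\E[Y]$ for every $n$. Since $X_n\to 0$ a.s.\ and the dominators converge in expectation, Pratt's lemma forces $\E[X_n]\to 0$, which is the second claim.

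The main obstacle is the almost-sure $p$-Wasserstein convergence $D_T^o(L^n(\zeta),P)\to 0$: weak convergence of the empirical measures by itself does not control the tails, and it is the supplementary moment convergence that rules out escape of mass and promotes weak convergence to Wasserstein convergence. One must also be careful to apply the empirical-measure and Wasserstein-characterization results in their full Polish-space generality, since $\mathbb{D}$ is an infinite-dimensional path space rather than $\R^d$; the assumption $P\in\mathcal{P}_p((\mathbb{D},d^o))$ is precisely what makes every relevant integral finite and the moment law of large numbers applicable.
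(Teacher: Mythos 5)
Your proof is correct and follows essentially the same route as the paper: Varadarajan's theorem plus the strong law of large numbers for the moments $d^o(\zeta^i,\mathbf{0})^p$, upgraded to almost-sure $p$-Wasserstein convergence via the weak-convergence-plus-moments characterization (the paper cites Corollary 12.2.2 of Rachev et al.\ for this step), and the same triangle/convexity bound for the uniform estimate. Your use of Pratt's lemma for the final passage to expectations is just a repackaging of the paper's uniform-integrability argument, since both rest on dominating $D_T^o(L^n(\zeta),P)^p$ by a sequence that converges almost surely with constant expectations.
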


\begin{proof}
The random variables $x_i:=\delta_{\zeta_i}$ being i.i.d. with values in the Polish space $(\mathcal{P}(\mathbb{D}),D^o_T)$, by the strong law of large numbers (SLLN) (Varadarajan's theorem), with probability one, 
$L^n(\zeta):=\frac{1}{n}\sum_{i=1}^n x_i$ converges weakly to $P$, as $n$ tends to infinity. Moreover, since $P\in\mathcal{P}_p((\mathbb{D},d^0))$, by the SLLN, $\frac{1}{n}\sum_{i=1}^n d^0(\zeta_i,0)^p$ converges to $D_T^o(P,\delta_0)^p$ a.s.. Therefore, in view of Corollary 12.2.2 in \cite{Rachev},  $D_T^o(L^n(\zeta),P)$ converges to $0$ a.s., as $n\to \infty$.
We also have
$$
D_T^o(L^n(\zeta),P)^p\le 2^{p-1}(\frac{1}{n}\sum_{i=1}^nd^0(\zeta_i,0)^p+ D_T^o(P,\delta_0)^p).
$$
Thus,
$$
\E[D_T^o(L^n(\zeta),P)^p]\le 2^{p-1}( \E[\frac{1}{n}\sum_{i=1}^n d^0(\zeta_i,0)^p]+ D_T^o(P,\delta_0)^p)=2^{p}D_T^o(P,\delta_0)^p.
$$
Therefore, $D_T^o(L^n(\zeta),P)^p$ is uniformly integrable. Hence it converges in $L^1$, i.e.
$$
\lim_{n\to\infty}\E[D_T^o(L^n(\zeta),P)^p]=0. $$\qed
\end{proof}

 The propagation of chaos is a property of systems of exchangeable (symmetric in law) processes. We first recall the definition of exchangeable (or symmetric in law) random variables and then of that of propagation of chaos. We refer to Sznitman \cite{s91} for further details.

\begin{Definition}[Exchangeable r.v.] The random variables $X^1,X^2,\ldots X^n$ are said to be exchangeable if the law of the random vector $(X^1,X^2,\ldots,X^n)$ is the same as that of $(X^{\sigma(1)},X^{\sigma(2)},\ldots,X^{\sigma(n)})$ for every  permutation $\sigma$ of the set $\{1,2,\ldots,n\}$. We write
$$
\text{Law}\,(X^1,X^2,\ldots,X^n)=\text{Law}\,(X^{\sigma(1)},X^{\sigma(2)},\ldots,X^{\sigma(n)}).
$$
\end{Definition}

Let $E$ be a separable metric space and let $P$ be a probability measure on $E$.
\begin{Definition}[Propagation of chaos property]
The sequence of exchangeable $E$-valued random variables $X^{1,n},X^{2,n},\ldots, X^{n,n}, \,n\ge 1$, is said to be $P$-chaotic provided that, for any fixed positive integer $k$,
$$
\underset{n\to\infty}{\lim}\,\text{Law}\,(X^{1,n},X^{2,n},\ldots, X^{k,n})=P^{\otimes k}.
$$
\end{Definition}

\subsection{Propagation of chaos}
To establish the propagation of chaos property of the particle system \eqref{BSDEParticle}, we shall further impose the following conditions.

\begin{Assumption}\label{Assump:chaos}
\begin{itemize}
    \item [(i)] The sequence  $ \xi^n=(\xi^{1,n},\xi^{2,n},\ldots,\xi^{n.n})$ is exchangeable i.e. the sequence of probability laws $\mu^n$ of $\xi^n$ on $\R^n$ is symmetric.

    \item[(ii)] For each $i\ge 1$, $\xi^{i,n}$ converges in $L^p$ to $ \xi^i$, i.e. $$\lim_{n\to\infty}\E[|\xi^{i,n}-\xi^i|^p]=0,$$
        
        where the random variables $\xi^{i}\in L^p(\Fc^i_T)$ are independent and equally distributed (iid) with probability law $\mu$.
    \item[(iii)] The component $(Y_t)$ of the unique solution of the mean-field reflected BSDE  \eqref{BSDE1} has jumps only at totally inaccessible stopping times.
\end{itemize}
    \end{Assumption}
Sufficient conditions on the barrier $h$ under which $(iv)$ holds are given in Theorem \ref{inacces}.\\

For $m\ge 1$, introduce the Polish spaces
$\mathbb{H}^{2,m}:=L^2([0,T];\R^m)$ and $\mathbb{H}_{\nu}^{2,m}$, the space of measurable functions $\ell: \,[0,T]\times \R^*\longrightarrow \R^n$ such that $\int_0^T\int_{\R^*}\sum_{j=1}^m |\ell^j(t,u)|^2\nu(du)dt< \infty$.     
    
\medskip
In the following proposition, we show that the \textit{exchangeability property} transfers from the terminal conditions to the the associated solution processes.
    \begin{Proposition}[Exchangeability property]\label{exchange}
Assume the sequence  $ \xi^n=(\xi^{1,n},\xi^{2,n},\ldots,\xi^{n.n})$ is exchangeable i.e. the sequence of probability laws $\mu^n$ of $\xi^n$ on $\R^n$ is symmetric. Then the processes $\Theta^{i.n}:=(Y^{i,n},Z^{i,n},U^{i,n},K^{i,n}),\,i=1,\ldots, n,$ solutions of  the systems \eqref{BSDEParticle} are exchangeable.
\end{Proposition}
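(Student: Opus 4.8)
The plan is to exploit the permutation symmetry of the system \eqref{BSDEParticle} together with the uniqueness granted by Theorem \ref{existParticle-2}. Fix a permutation $\sigma$ of $\{1,\dots,n\}$ and introduce the relabelled terminal data and noises $\check\xi^{\,i,n}:=\xi^{\sigma(i),n}$, $\check B^i:=B^{\sigma(i)}$, $\check N^i:=\tilde N^{\sigma(i)}$, together with the relabelled quadruple $\check\Theta^{\,i,n}:=(Y^{\sigma(i),n},Z^{\sigma(i),\sigma(i),n},U^{\sigma(i),\sigma(i),n},K^{\sigma(i),n})$, where the full integrands are permuted in \emph{both} indices, $\check Z^{\,i,j,n}:=Z^{\sigma(i),\sigma(j),n}$ and $\check U^{\,i,j,n}:=U^{\sigma(i),\sigma(j),n}$. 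The goal is to show that $\check\Theta^{\,i,n}$ is precisely the solution of the system driven by the relabelled data, and then to pass to the level of laws.

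The core step, which I expect to be the main work, is the \emph{pathwise} verification that $\check\Theta^{\,i,n}$ solves \eqref{BSDEParticle} driven by $(\check B^i,\check N^i)$ with terminal condition $\check\xi^{\,i,n}$. Three points must be checked, all for a.e.\ $\omega$: first, the empirical measure is invariant under relabelling of its atoms, $L_n[\check{\mathbf Y}^n_s]=L_n[\mathbf Y^n_s]$, so that by Assumption \ref{Assump-PS} (which makes $f$ and $h$ deterministic and particle-independent) neither the driver nor the obstacle is altered; second, the stochastic integrals transform correctly by reindexing the summation,
\[
\int_t^T\sum_{j=1}^n \check Z^{\,i,j,n}_s\,d\check B^j_s=\int_t^T\sum_{j=1}^n Z^{\sigma(i),\sigma(j),n}_s\,dB^{\sigma(j)}_s=\int_t^T\sum_{k=1}^n Z^{\sigma(i),k,n}_s\,dB^k_s,
\]
which is exactly the martingale part of the $\sigma(i)$-th equation (and likewise for the compensated Poisson integral against $\tilde N^j$); third, the obstacle inequality $Y^{\sigma(i),n}_t\ge h(t,Y^{\sigma(i),n}_t,L_n[\mathbf Y^n_t])$ and the Skorokhod flatness condition for the index $\sigma(i)$ carry over verbatim because of the measure-invariance noted above. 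Hence $\check\Theta^{\,i,n}$ is a genuine solution of the relabelled system, and by the uniqueness statement of Theorem \ref{existParticle-2} it \emph{is} the solution attached to the inputs $(\check\xi^{\,i,n},\check B^i,\check N^i)_{1\le i\le n}$.

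It then remains to pass to laws. Uniqueness in Theorem \ref{existParticle-2} furnishes a single measurable solution map $\Psi$, identical for every particle label since $f,h$ and the structure of \eqref{BSDEParticle} are label-independent, so that $\Theta=\Psi\big((\xi^{i,n},B^i,\tilde N^i)_i\big)$ and, by the previous step, $(\Theta^{\sigma(i),n})_i=\Psi\big((\check\xi^{\,i,n},\check B^i,\check N^i)_i\big)$. Because the driving noises $(B^i,\tilde N^i)_{1\le i\le n}$ are i.i.d.\ and hence exchangeable, while $\xi^n$ is exchangeable by hypothesis, the relabelled input family $(\check\xi^{\,i,n},\check B^i,\check N^i)_i$ has the same joint law as $(\xi^{i,n},B^i,\tilde N^i)_i$; applying the fixed measurable map $\Psi$ preserves equality in law, yielding $\mathrm{Law}\big((\Theta^{\sigma(i),n})_i\big)=\mathrm{Law}\big((\Theta^{i,n})_i\big)$. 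Since $\sigma$ was arbitrary, the processes $\Theta^{i,n}$ are exchangeable. The only subtle point in this last paragraph is the joint invariance in law of the input family: it is the combination of the i.i.d.\ noise and the exchangeable terminal data (built symmetrically from those noises) that must be invoked, rather than exchangeability of $\xi^n$ in isolation; once this is in place the conclusion is immediate from uniqueness.
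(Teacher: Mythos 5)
Your strategy is the same as the paper's: verify pathwise that the relabelled tuple solves the relabelled system, invoke uniqueness from Theorem \ref{existParticle-2}, and then transfer the resulting identity to the level of laws. Your pathwise step is in fact executed \emph{more} carefully than in the paper's own proof: you note explicitly that the $Z$ and $U$ integrands must be permuted in both indices and that the driving noises must be relabelled simultaneously, so that $(Y^{\sigma(i),n},(Z^{\sigma(i),\sigma(j),n})_j,(U^{\sigma(i),\sigma(j),n})_j,K^{\sigma(i),n})_i$ solves \eqref{BSDEParticle} driven by $(B^{\sigma(j)},\tilde N^{\sigma(j)})_j$ — a point the paper glosses over. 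The genuine gap is in the transfer-to-laws step. You assert that uniqueness ``furnishes a single measurable solution map $\Psi$'' with $\Theta=\Psi\big((\xi^{i,n},B^i,\tilde N^i)_i\big)$. Pathwise uniqueness does not furnish such a map by itself: the solution is $\mathbb{F}^n$-adapted, hence a measurable functional of the noise paths, but that functional depends on how the terminal data are expressed as functionals of the noises, not merely on their realized values, so a universal map defined on input data is not automatic. Converting ``pathwise uniqueness plus equality of input laws'' into ``equality of solution laws'' is precisely the Yamada--Watanabe-type argument, which the paper carries out by a Skorohod representation on the product Polish space where $(\xi^n,\{B^i\},\{\tilde N^i\},\{\Theta^{i,n}\}_i)$ takes its values, following \cite{yw71} and \cite{Delarue}. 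As written, your proof assumes this key point rather than proving or citing it.

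The second issue is the one you flag yourself: the joint invariance $\text{Law}\big((\xi^{\sigma(i),n},B^{\sigma(i)},\tilde N^{\sigma(i)})_i\big)=\text{Law}\big((\xi^{i,n},B^i,\tilde N^i)_i\big)$. Your parenthetical justification ``built symmetrically from those noises'' is not among the hypotheses: Assumption \ref{Assump:chaos}(i) only makes the law of $\xi^n$ on $\R^n$ symmetric, each $\xi^{i,n}$ being merely $\mathcal{F}^n_T$-measurable, and marginal exchangeability plus i.i.d.\ noises does \emph{not} imply the joint invariance. Concretely, take $n=2$, $f\equiv 0$, $h\equiv -2$, $\xi^{1,2}=\tanh(B^1_T)$, $\xi^{2,2}=-\tanh(B^1_T)$: the pair $(\xi^{1,2},\xi^{2,2})$ has a symmetric law, yet the unique solution has $K\equiv 0$, $U\equiv 0$, $Z^{1,1}_t=u_x(t,B^1_t)>0$ and $Z^{2,1}_t=-u_x(t,B^1_t)<0$ with $u(t,x):=\E[\tanh(x+B_T-B_t)]$, so $(\Theta^{1,2},\Theta^{2,2})$ and $(\Theta^{2,2},\Theta^{1,2})$ have different laws (the $Y$-components happen to remain exchangeable here, but the full quadruple does not). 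So the joint exchangeability of $(\xi^{i,n},B^i,\tilde N^i)_i$ that your last paragraph needs is genuinely stronger than the stated hypothesis. To be fair, this gap is inherited from the paper: its proof silently requires the same strengthening, and you are at least explicit that it is the joint input law that matters. In summary: same approach, a usefully sharpened pathwise step, but the law-transfer must be done via the Yamada--Watanabe/Skorohod machinery rather than by appeal to ``uniqueness'' alone, and the joint exchangeability of the inputs must be assumed (or derived from extra structure) for either your argument or the paper's to close.
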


\begin{proof} Since 
for any permutation $\sigma$ of the set $\{ 1,2,\ldots,n\}$, we have
$$
\frac{1}{n}\sum_{i=1}^n\delta_{Y^{i,n}}=\frac{1}{n}\sum_{i=1}^n\delta_{Y^{\sigma(i),n}},
$$
by the pathwise existence and uniqueness results obtained in Theorem \ref{existParticle-2}, we have
$$
(\Theta^{1.n},\Theta^{2.n},\ldots,\Theta^{n.n})=(\Theta^{\sigma(1).n},\Theta^{\sigma(2).n},\ldots,\Theta^{\sigma(n).n}),\quad \text{a.s.}
$$
whenever $(\xi^{1,n},\ldots,\xi^{n,n})=(\xi^{\sigma(1),n},\ldots,\xi^{\sigma(n),n})$ a.s.. Now, $\xi^n$ being exchangeable, by a careful use of the Skorohod representation theorem applied to the Polish space  $$
\R^n\times (\mathbb{D},d^0)^{\otimes n}\times C([0,T];\R^n)\times(\mathbb{D},d^0)^{\otimes n}\times \mathbb{H}^{2,n\otimes n}\times \mathbb{H}_{\nu}^{2,n\otimes n}\times (\mathbb{D},d^0)^{\otimes n}
$$
where the process $(\xi^n,\{B^i\}_{i=1}^n, \{\tilde{N}^i\}_{i=1}^n,\{\Theta^{i.n}\}_{i=1}^n)$ take their values, as used by Yamada and Watanabe \cite{yw71} to
prove uniqueness of solutions to SDEs and extended  in 
Delarue \cite{Delarue} (see the proof after Remark 1.6, pp. 224-227) to prove uniqueness in law for forward-backward SDEs, it follows that each of the systems is exchangeable i.e. for every permutation $\sigma$ of $\{1,2,\ldots,n\}$,
$$
\text{Law}(\Theta^{1,n},\Theta^{2,n},\ldots,\Theta^{n,n})=\text{Law}(\Theta^{\sigma(1),n},\Theta^{\sigma(2),n},\ldots,\Theta^{\sigma(n),n}).$$
\qed
\end{proof}

Consider the product space 
$$
G:=\mathbb{D}\times \mathbb{H}^{2,n}\times \mathbb{H}_{\nu}^{2,n}\times \mathbb{D}
$$
endowed with the product metric 
$$
\delta((y,z,u,k),(y^{\prime},z^{\prime},u^{\prime},k^{\prime})):=\left(d^o(y,y^{\prime})^p+\|z-z^{\prime}\|^p_{\mathbb{H}^{2,n}}+\|u-u^{\prime}\|^p_{\mathbb{H}_{\nu}^{2,n}}+d^o(k,k^{\prime})^p\right)^{\frac{1}{p}}.
$$
We define the Wasserstein metric on $\mathcal{P}_p(G)$ by 
\begin{equation}\label{W-simultan} 
D_G(P,Q)=\inf\left\{\left(\int_{G\times G}  \delta((y,z,u,k),(y^{\prime},z^{\prime},u^{\prime},k^{\prime}))^p R(d(y,z,u,k),d(y^{\prime},z^{\prime},u^{\prime},k^{\prime}))\right)^{1/p}\right\},
\end{equation}
over $R\in\mathcal{P}(G\times G)$ with marginals $P$ and $Q$. Since $(G,\delta)$ is a Polish space, $(\mathcal{P}_p(G), D_G)$ is a Polish space and induces the topology of weak convergence (see Theorem 6.9 in \cite{Villani}).

\medskip
Let $\Theta^i:=(Y^i,Z^i,U^i,K^i), i=1,\ldots, n$, with independent terminal values $Y^i_T=\xi^i,i=1,\ldots, n$, be independent copies of $\Theta:=(Y,Z,U,K)$, the solution of \eqref{BSDE1}. 
More precisely, for each $i=1,\ldots,n$, $\Theta^i$ is the unique solution of the reflected MF-BSDE
\begin{align}\label{BSDEParticle-Theta}
    \begin{cases}
    Y^{i}_t = \xi^{i} +\int_t^T f(s,Y^{i}_{s},Z^{i}_{s},U^{i}_{s},\mathbb{P}_{Y_s^{i}})ds + K^{i}_T - K^{i}_t- \int_t^T Z^{i}_s dB^i_s -  \int_t^T \int_{R^*}  U^{i}_s(e) \tilde N^i(ds,de), \\
    Y^{i}_{t} \geq h(t,Y^{i}_{t},\mathbb{P}_{Y_t^{i}}),  \quad \forall t \in [0,T], \\
    \int_0^T (Y^{i}_{t^-}-h(t^-,Y^{i}_{t^-},\mathbb{P}_{Y_{t^-}^{i}}))dK^{i}_t = 0.
    \end{cases}
\end{align}
In the sequel, we denote $(f \circ Y^{i})(t,y,z):=f(t,y,z,\mathbb{P}_{Y^{i}_t})$.

\noindent For any fixed $1\leq k\leq n$,  let
$$
\P^{k,n}:=\text{Law}\,(\Theta^{1,n},\Theta^{2,n},\ldots,\Theta^{k,n}),\quad \P_{\Theta}^{\otimes k}:=\text{Law}\,(\Theta^1,\Theta^2,\ldots,\Theta^k).
$$
be the joint probability laws of the processes $(\Theta^{1,n},\Theta^{2,n},\ldots,\Theta^{k,n})$ and $(\Theta^1,\Theta^2,\ldots,\Theta^k)$, respectively.

From the definition of the distance $D_G$, we obtain the following inequality. 
\begin{equation}\label{chaos-0}
D_G^p(\P^{k,n},\P_{\Theta}^{\otimes k})\le 
k\sup_{i\le k}\left(\|Y^{i,n}-Y^i\|^p_{\sp}+\|Z^{i,n}-Z^i{\bf e}_i\|^p_{\mathcal{H}^{p,n}}+\|U^{i,n}-U^i{\bf e}_i\|^p_{\mathcal{H}^{p,n}_{\nu}}+\|K^{i,n}-K^i\|^p_{\sp}\right),
\end{equation}
where for each $i=1,\ldots, n$,  ${\bf e}_i:=(0,\ldots,0,\underbrace{1}_{i},0,\ldots,0)$.

Before we state and prove the propagation of chaos result, we derive the following convergence result of the empirical laws of i.i.d. copies $Y^1,Y^2,\ldots,Y^n$ solutions of \eqref{BSDEParticle-Theta} and convergence of the particle system \eqref{BSDEParticle} to the solution to \eqref{BSDE1}.

\begin{Theorem}[Law of Large Numbers]\label{LLN-2} Let $Y^1,Y^2,\ldots,Y^n$ with terminal values $Y^i_T=\xi^i$ be independent copies of  the solution $Y$ of \eqref{BSDE1}. Then, we have
\begin{equation}\label{GC-2}
\lim_{n\to\infty}\E\left[\underset{0\le t\le T}\sup\, \mathcal{W}_p^p(L_n[\textbf{Y}_t],\P_{Y_t})\right]=0.
\end{equation}
\end{Theorem}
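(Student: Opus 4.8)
The plan is to transfer the uniform-in-time convergence of the time-marginals from a convergence of the full path-empirical measure in the weaker Skorohod--Wasserstein distance $D_T^o$, for which the law of large numbers of Lemma \ref{LLN} is available (recall that $(\mathbb{D},d^o)$ is Polish, whereas $(\mathbb{D},|\cdot|_T)$ is not separable, so Varadarajan's theorem is only at our disposal for $d^o$). Write $P:=\P_Y\in\mathcal{P}_p((\mathbb{D},d^o))$ (finite $p$-th moment since $Y\in\mathcal{S}^p$ and $d^o(\cdot,0)\le|\cdot|_T$), and let $\mu^n:=\frac1n\sum_{i=1}^n\delta_{Y^i}$ be the empirical measure of the i.i.d. paths, whose time-$t$ marginal is exactly $L_n[\textbf{Y}_t]$. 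Since the $Y^i$ are i.i.d. copies of $Y$, Lemma \ref{LLN} gives $\E[D_T^o(\mu^n,P)^p]\to0$. For each $n$ I would pick an optimal coupling $R_n\in\mathcal{P}(\mathbb{D}\times\mathbb{D})$ of $(\mu^n,P)$ for $D_T^o$; projecting $R_n$ through $(x,y)\mapsto(x(t),y(t))$ yields a coupling of $(L_n[\textbf{Y}_t],\P_{Y_t})$, so that
\[
\mathcal{W}_p^p(L_n[\textbf{Y}_t],\P_{Y_t})\le\int_{\mathbb{D}\times\mathbb{D}}|x(t)-y(t)|^p\,R_n(dx,dy),\qquad 0\le t\le T.
\]

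Next, fix $\delta>0$, set $\epsilon:=\ln(1+\delta/T)$, and split this integral over $A:=\{d^o(x,y)\le\epsilon\}$ and $A^c$. On $A$ the estimate \eqref{x-y-esti} applies, giving $|x(t)-y(t)|^p\le 3^{p-1}\big(\delta^p+2^pw^{\prime}_y(2\delta)^p+\sup_{t-\delta<s\le t+\delta}|\Delta y(s)|^p\big)$. The key observation is that, after integrating against $R_n$, the last two terms depend only on the second marginal $P=\P_Y$; taking the supremum over $t$ and using $R_n(A)\le1$,
\[
\sup_{t}\int_{A}|x(t)-y(t)|^p\,R_n\le 3^{p-1}\Big(\delta^p+2^p\,\E[w^{\prime}_Y(2\delta)^p]+\sup_{t}\,\E\big[\sup_{t-\delta<s\le t+\delta}|\Delta Y_s|^p\big]\Big),
\]
a bound that is deterministic and independent of $n$.

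The main task is then to prove that this deterministic bound tends to $0$ as $\delta\to0$, and the jump term is the step I expect to be the main obstacle. The term $\delta^p$ is harmless; for the modulus term, $w^{\prime}_Y(2\delta)\to0$ a.s. by the characterization of càdlàg paths, while $w^{\prime}_Y(2\delta)\le 2|Y|_T\in L^p$, so dominated convergence gives $\E[w^{\prime}_Y(2\delta)^p]\to0$. For the jump term I would introduce the measure $\Psi(B):=\E[\sum_{s}|\Delta Y_s|^p\ind_{\{s\in B\}}]$ on $[0,T]$, finite by the $\mathcal{S}^p$-integrability of $Y$ and of its jump part, so that $\E[\sup_{t-\delta<s\le t+\delta}|\Delta Y_s|^p]\le\Psi((t-\delta,t+\delta])$. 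Since by Assumption \ref{Assump:chaos}(iii) $Y$ jumps only at totally inaccessible times, every fixed (hence predictable) time carries no jump, whence $\Psi(\{t\})=\E[|\Delta Y_t|^p]=0$ for all $t$; thus $\Psi$ is non-atomic, and a standard compactness argument on $[0,T]$ shows $\sup_t\Psi((t-\delta,t+\delta])\to0$ as $\delta\to0$.

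It remains to control the $A^c$-contribution. There $|x(t)-y(t)|^p\le2^{p-1}(|x|_T^p+|y|_T^p)$, and truncating at level $M$ gives, after taking $\E$ and using that the $Y^i$ are distributed as $Y$,
\[
\E\Big[\sup_t\int_{A^c}|x(t)-y(t)|^p R_n\Big]\le 2^p M^p\,\E[R_n(A^c)]+2^p\,\E\big[|Y|_T^p\ind_{\{|Y|_T>M\}}\big].
\]
By Markov's inequality $R_n(A^c)\le\epsilon^{-p}D_T^o(\mu^n,P)^p$, so $\E[R_n(A^c)]\to0$ by Lemma \ref{LLN}. Hence, given $\eta>0$, I would first choose $\delta$ small enough that the deterministic bound above is $<\eta/3$, then $M$ large enough that $2^p\E[|Y|_T^p\ind_{\{|Y|_T>M\}}]<\eta/3$ (uniform integrability of $|Y|_T^p$), and finally $n$ large enough that $2^pM^p\E[R_n(A^c)]<\eta/3$. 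Combining the two pieces yields $\limsup_n\E[\sup_t\mathcal{W}_p^p(L_n[\textbf{Y}_t],\P_{Y_t})]\le\eta$, and \eqref{GC-2} follows by the arbitrariness of $\eta$.
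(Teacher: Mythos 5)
Your proof is correct, and its skeleton coincides with the paper's: reduce to the path-level empirical measure via an optimal $d^o$-coupling, invoke Lemma \ref{LLN}, split along the event $\{d^o(x,y)\le\ln(1+\delta/T)\}$ and apply \eqref{x-y-esti} there, handle the modulus term $w'_Y(2\delta)$ by dominated convergence, and control the complementary event by $M$-truncation plus a Chebyshev bound (your choice to take expectations from the start and close with an $\eta/3$-argument is a mild streamlining of the paper's route, which first proves a.s.\ convergence and then upgrades to $L^1$ convergence by uniform integrability). The genuine difference is the jump term, i.e.\ the proof that $\lim_{\delta\to0}\sup_t\E[\sup_{(t-\delta)\vee 0<s\le t+\delta}|\Delta Y_s|^p]=0$ (the paper's \eqref{jumps}), which is precisely the step the paper treats as the main technical difficulty of the jump setting. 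The paper uses Assumption \ref{Assump:chaos}(iii) to get $K$ continuous, writes $\Delta Y_s=U_s(\Delta\eta_s)\ind_{\R^*}(\Delta\eta_s)$, and runs explicit truncation arguments through $U\in\mathcal{H}^{2}_{\nu}$, resp.\ $\mathcal{H}^{p}_{\nu}$, treating $p=2$ and $p>2$ separately (the latter with an additional truncation of $\sup_s|Y_s|$ at level $L$). You instead introduce the deterministic measure $\Psi(B)=\E[\sum_{s\in B}|\Delta Y_s|^p]$, note that it is atomless because deterministic times are predictable and hence carry no jumps under Assumption \ref{Assump:chaos}(iii), and conclude by a compactness argument; this avoids both the case distinction in $p$ and any use of the Poisson representation of the jumps, and it isolates exactly the two facts needed: absence of fixed-time jumps and $\Psi([0,T])<\infty$. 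The one point you should not leave implicit is precisely this finiteness, which does not follow from $Y\in\mathcal{S}^p$ alone: you need $p\ge2$ to write $\sum_s|\Delta Y_s|^p\le\bigl(\sum_s|\Delta Y_s|^2\bigr)^{p/2}$, then bound the squared-jump sum by $2\bigl(K_T^2+[\tilde J]_T\bigr)$ with $\tilde J_\cdot=\int_0^\cdot\int_{\R^*}U_s(e)\tilde N(ds,de)$, and finally combine a BDG-type inequality with the BSDE itself to get $\E\bigl[[\tilde J]_T^{p/2}\bigr]\le c_p\,\E[\sup_t|\tilde J_t|^p]<\infty$. This is the same integrability input ($U\in\mathcal{H}^{p}_{\nu}$, $K\in\mathcal{S}_i^p$) that the paper channels through its explicit computation, so nothing is lost; but without this line your measure $\Psi$ could a priori be infinite and the compactness argument would collapse.
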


\begin{proof}  
For any joint law $R^n( dx,dy)$ with marginals $L_n[\textbf{Y}]$ and $\P_{Y}$ and any $\delta>0$, we have
$$
\begin{array}{lll} \mathcal{W}_p^p(L_n[\textbf{Y}_t],\P_{Y_t})\le \int_{\mathbb{D}\times\mathbb{D}} |x(t)-y(t)|^p R^n( dx,dy) \\ \qquad\qquad\qquad\qquad = \int_{\mathbb{D}\times\mathbb{D}} |x(t)-y(t)|^p\ind_{\{d^o(x,y)\le \ln(1+\delta/T)\}}R^n( dx,dy) \\ \qquad\qquad\qquad\qquad\qquad +
\int_{\mathbb{D}\times\mathbb{D}} |x(t)-y(t)|^p \ind_{\{d^o(x,y)> \ln(1+\delta/T)\}}R^n( dx,dy).
\end{array}
$$
Integrating both sides of \eqref{x-y-esti} against the joint law $R^n( dx,dy)$, we obtain
$$
\begin{array}{lll}
3^{1-p}\int_{\mathbb{D}\times\mathbb{D}} |x(t)-y(t)|^p \ind_{\{d^o(x,y)\le \ln(1+\delta/T)\}}R^n( dx,dy) \\ \qquad\qquad \le \delta^p +2^p\int_{\mathbb{D}} w^{\prime}_y(2\delta)^p\P_Y(dy)+\int_{\mathbb{D}} \underset{(t-\delta) \vee 0< s\le t+\delta}{\sup}\,|y(s)-y(s^-)|^p\P_Y(dy).
\end{array}
$$
Noting that 
$$
\int_{\mathbb{D}} \underset{(t-\delta) \vee 0< s\le t+\delta}{\sup}\,|y(s)-y(s^-)|^p\P_Y(dy)=\E\left[\underset{(t-\delta) \vee 0< s\le t+\delta}{\sup}\,|\Delta Y(s)|^p\right]
$$
where $\Delta Y(s):=Y(s)-Y(s^-)$ denotes the jump size of the process $Y$ solution to \eqref{BSDE1}, we have
$$
\begin{array}{lll}
\mathcal{W}_p^p(L_n[\textbf{Y}_t],\P_{Y_t})\le \int_{\mathbb{D}\times\mathbb{D}} |x(t)-y(t)|^p\ind_{\{d^o(x,y)>\ln(1+\delta/T)\}}R^n( dx,dy)+ 3^{p-1}\delta^p \\ \qquad\qquad\qquad\qquad + 6^p\int_{\mathbb{D}} w^{\prime}_y(2\delta)^p\P_Y(dy)+3^{p-1}\E\left[\underset{(t-\delta) \vee 0< s\le t+\delta}{\sup}\,|\Delta Y(s)|^p\right].
\end{array}
$$
Since, for any $M>0$, it holds that
$$
\max(|x-y|_T^p-M^p,0)\le 2^p|x|_T^p\ind_{\{|x|_T\ge M/2\}}+2^p|y|_T^p\ind_{\{|y|_T\ge M/2\}},
$$
we have
$$\begin{array}{lll}
\int_{\mathbb{D}\times\mathbb{D}} |x-y|^p_T\ind_{\{d^o(x,y)>\ln(1+\delta/T)\}}R^n( dx,dy)
\\ \qquad\qquad \le \int_{\mathbb{D}\times\mathbb{D}} \min(|x-y|_T,M)^p\ind_{\{d^o(x,y)>\ln(1+\delta/T)\}}R^n( dx,dy) \\ \qquad\qquad + 2^p\int_{\mathbb{D}\times\mathbb{D}}|x|_T^p\ind_{\{|x|_T\ge M/2,d^o(x,y)>\ln(1+\delta/T) \}}R^n( dx,dy) \\ \qquad\qquad  +2^p \int_{\mathbb{D}\times\mathbb{D}}|y|_T^p\ind_{\{|y|_T\ge M/2, d^o(x,y)>\ln(1+\delta/T)\}}R^n( dx,dy) \\ \qquad\qquad
\le  M^p\int_{\mathbb{D}\times\mathbb{D}} \ind_{\{d^o(x,y)>\ln(1+\delta/T)\}}R^n( dx,dy)+  2^p\int_{\mathbb{D}}|x|_T^p\ind_{\{|x|_T\ge M/2\}}L_n[\mathbf{Y}](dx) \\ \qquad\qquad 
+ 2^p\int_{\mathbb{D}}|y|_T^p\ind_{\{|y|_T\ge M/2\}}\P_{Y}(dy).
\end{array}
$$
Therefore, for any $M>0, \delta>0$,
\begin{equation}\label{W-est-1}
\begin{array}{lll}
\underset{0\le t\le T}\sup\,\mathcal{W}_p^p(L_n[\textbf{Y}_t],\P_{Y_t})\le  M^p\int_{\mathbb{D}\times\mathbb{D}} \ind_{\{d^o(x,y)>\ln(1+\delta/T)\}}R^n( dx,dy) \\ \qquad\qquad +  2^p\int_{\mathbb{D}}|x|_T^p\ind_{\{|x|_T\ge M/2\}}L_n[\mathbf{Y}](dx) 
+ 2^p\int_{\mathbb{D}}|y|_T^p\ind_{\{|y|_T\ge M/2\}}\P_{Y}(dy) \\ \qquad\qquad + 3^{p-1}\delta^p+6^p\int_{\mathbb{D}} w^{\prime}_y(2\delta)^p\P_Y(dy)+3^{p-1}\underset{0\le t\le T}{\sup}\,\E\left[\underset{(t-\delta) \vee 0< s\le t+\delta}{\sup}\,|\Delta Y(s)|^p\right].
\end{array}    
\end{equation}
By (14.8) in Billingsley \cite{b68}, $\underset{\delta\to 0}\lim\, w^{\prime}_y(2\delta)=0$, and since 
$$
\int_{\mathbb{D}} w^{\prime}_y(2\delta)^p\P_Y(dy)\le 2\E[|Y|^p_T] <\infty,
$$
by Lebesgue's dominated convergence theorem, we obtain
$$
\underset{\delta\to \infty}{\lim}\,\int_{\mathbb{D}} w^{\prime}_y(2\delta)^p\P_Y(dy)=0.
$$
Moreover, for any $\delta >0$,
$$
\underset{n\to \infty}{\lim}\,\int_{\mathbb{D}\times\mathbb{D}} |x-y|_T^p\ind_{\{d^o(x,y)>\ln(1+\delta/T)\}}R^n( dx,dy)= 0, \quad\text{a.s.}
$$
Indeed, in view of \eqref{GC-1}, for any $\delta >0$, we have
$$
\underset{n\to \infty}{\lim}\,\int_{\mathbb{D}\times\mathbb{D}} \ind_{\{d^o(x,y)>\ln(1+\delta/T)\}}R^n( dx,dy)\le  (\ln(1+\delta/T))^{-1}\underset{n\to \infty}{\lim}\,D_T^o(L_n[\mathbf{Y}],\P_{Y})^p=0,\quad\text{a.s.}
$$
Moreover, by the SLLN,
$$
\underset{n\to \infty}{\lim}\, \int_{\mathbb{D}}|x|_T^p\ind_{\{|x|_T\ge M/2\}}L_n[\mathbf{Y}](dx)=\int_{\mathbb{D}}|y|_T^p\ind_{\{|y|_T\ge M/2\}}\P_{Y}(dy)
$$
and since $\P_{Y}\in \mathcal{P}_p(\mathbb{D})$, it is uniformly integrable. In particular, we have
$$
\underset{M\to \infty}{\lim}\,\int_{\mathbb{D}}|y|_T^p\ind_{\{|y|_T\ge M/2\}}\P_{Y}(dy)=0.
$$
Therefore, for any $\delta >0$
$$
\begin{array}{lll}
\underset{n\to \infty}{\lim}\,\int_{\mathbb{D}\times\mathbb{D}} |x-y|_T\ind_{\{d^o(x,y)>\ln(1+\delta/T)\}}R^n( dx,dy)\le 2^{p+1}\underset{M\to \infty}{\lim}\,\int_{\mathbb{D}}|y|_T^p\ind_{\{|y|_T\ge M/2\}}\P_{Y}(dy)=0 \quad \text{a.s.}
\end{array} 
$$
Finally, by taking the limits $n\to\infty$, then $M\to\infty$ and lastly $\delta\to 0$, we obtain
\begin{equation}\label{W-est-2}
\underset{n\to \infty}{\lim}\,
\underset{0\le t\le T}{\sup}\,\mathcal{W}_p^p(L_n[\textbf{Y}_t],\P_{Y_t})\le 3^{p-1}
\underset{\delta\to 0}\lim\,\underset{0\le t\le T}{\sup}\E[\underset{(t-\delta) \vee 0< s\le t+\delta}{\sup}\,|\Delta Y(s)|^p],\quad\text{a.s.}
\end{equation}
Once we can show that
\begin{equation}\label{jumps}
\underset{\delta\to 0}\lim\,\underset{0\le t\le T}{\sup}\E\left[\underset{(t-\delta) \vee 0< s\le t+\delta}{\sup}\,|\Delta Y(s)|^p\right]=0,
\end{equation}
we obtain
$$
\underset{n\to \infty}{\lim}\,
\underset{0\le t\le T}{\sup}\,\mathcal{W}_p^p(L_n[\textbf{Y}_t],\P_{Y_t})=0 \quad \text{a.s.}
$$
But, since 
$$
\E\left[\underset{0\le t\le T}{\sup}\,\mathcal{W}_p^p(L_n[\textbf{Y}_t],\P_{Y_t})\right]\le \E[D_T^p(L_n[\textbf{Y}],\P_{Y})]\le 2^{p-1}\left( \E\left[\frac{1}{n}\sum_{i=1}^n|Y^i|_T^p\right]+ \|\P_{Y}\|_p^p\right)=2^{p}\|\P_{Y}\|_p^p, 
$$
the random variable $\underset{0\le t\le T}{\sup}\,\mathcal{W}_p^p(L_n[\textbf{Y}_t],\P_{Y_t})$ is uniformly integrable. Thus, it converges to $0$ in $L^1$, i.e.
$$
\underset{n\to \infty}{\lim}\,\E\left[\underset{0\le t\le T}{\sup}\,\mathcal{W}_p^p(L_n[\textbf{Y}_t],\P_{Y_t})\right]=0.
$$
It remains to show \eqref{jumps}. By Assumption $\eqref{Assump:chaos}(iv)$, it follows that the process $K_t$ is continuous, which implies that $\Delta Y_t=\Delta J_t$, where  $J_t:=\int_{(0,t]}\int_{\R^*}U_s(e)N(ds,de)$.

\medskip

$\bullet$ \underline{Estimation of $\Delta J_s$ (for the case $p=2$)}.
Clearly, 
$$
\Delta J_t=U_t(\Delta\eta_t)\ind_{\R^*}(\Delta\eta_t),
$$
where $(\eta_s)_s$ denotes the L{\'e}vy  process for which
$$
N(t,A):=\#\{s\in(0,t],\,\, \Delta\eta_t\in A\},\quad A\subset \R^*.
$$
We have
$$\begin{array}{lll}
\underset{(t-\delta) \vee 0<s\le t+\delta}{\sup}\,|\Delta J_s|^2=\underset{(t-\delta) \vee 0<s\le t+\delta}{\sup}\,
|U_s(\Delta\eta_s)|^2\ind_{\R^*}(\Delta\eta_s)\le 
\underset{(t-\delta) \vee 0<s\le t+\delta}{\sum}\,
|U_s(\Delta\eta_s)|^2\ind_{\R^*}(\Delta\eta_s)
\end{array}
$$
that is
$$
\underset{(t-\delta) \vee 0 <t \le t+\delta}{\sup}\,|\Delta J_s|^2\le \int_{(0,T]\times \R^*}\ind_{((t-\delta) \vee 0,t+\delta]}(s)|U_s(e)|^2N(ds,de). $$
Since $U\in \mathcal{H}_{\nu}^2$,  we have
$$
\E\left[\int_{(0,T]\times \R^*}\ind_{((t-\delta) \vee 0,t+\delta]}(s)|U_s(e)|^2N(ds,de)\right]= \E\left[\int_{(t-\delta) \vee 0}^{t+\delta}|U_s|^2_{\nu}ds\right].
$$
Therefore, for any $R>0$, 
$$\begin{array}{lll}
\E\left[\underset{(t-\delta) \vee 0<t\le t+\delta}{\sup}\,|\Delta J_s|^2\right]  \le R^2 \delta+\E\left[\int_0^T|U_s|_{\nu}^2\ind_{\{|U_s|_{\nu}>R\}}ds\right],
\end{array}
$$
so
\begin{equation}\label{J-2}
\underset{0\le t\le T}{\sup}\E\left[\underset{(t-\delta) \vee 0<t\le t+\delta}{\sup}\,|\Delta J_s|^2\right]\le R^2 \delta+\E\left[\int_0^T|U_s|_{\nu}^2\ind_{\{|U_s|_{\nu}>R\}}ds\right].
\end{equation}
Thus,
$$
\underset{\delta\to 0}\lim\,\underset{0\le t\le T}{\sup}\E\left[\underset{(t-\delta) \vee 0<t\le t+\delta}{\sup}\,|\Delta J_s|^2\right]\le
\E\left[\int_0^T|U_s|_{\nu}^2\ind_{\{|U_s|_{\nu}>R\}}ds\right].
$$
Again, since $U\in \mathcal{H}_{\nu}^2$, $\underset{R \to \infty}\lim\,\E\left[\int_0^T|U_s|_{\nu}^2\ind_{\{|U_s|_{\nu}>R\}}ds\right]=0$. Hence,
$$\underset{\delta\to 0}\lim\,\underset{0\le t\le T}{\sup}\E\left[\underset{(t-\delta) \vee 0<t\le t+\delta}{\sup}\,|\Delta J_s|^2\right]=0.
$$

\medskip
$\bullet$ \underline{Estimation of $\Delta J_s$ (for the case $p>2$)} 

Since the process $(K_t)$ is continuous, we obtain, for any $L>0$,
\begin{align*}
 \underset{(t-\delta) \vee 0<s\le t+\delta}{\sup}\,|\Delta J_s|^p &  \leq 2^{p-2} \underset{(t-\delta) \vee 0<s\le t+\delta}{\sup}\,
|\Delta J_s|^2\underset{0 \le s\le T}{\sup}\,|Y_s|^{p-2} \ind_{\{\underset{0 \le s\le T}{\sup}\,|Y_s|\le L \}}+2^{p}\underset{0 \le s\le T}{\sup}\,|Y_s|^{p} \ind_{\{\underset{0 \le s\le T}{\sup}\,|Y_s|>L\}} \\ & \le
2^{p-2}L^{p-2} \underset{(t-\delta) \vee 0<s\le t+\delta}{\sup}\,
|\Delta J_s|^2+2^{p}\underset{0 \le s\le T}{\sup}\,|Y_s|^{p} \ind_{\{\underset{0 \le s\le T}{\sup}\,|Y_s|>L\}}.
\end{align*}

Therefore, in view of \eqref{J-2}, for any $L>0$ and $R>0$, 
\begin{align*}
\E\left[\underset{(t-\delta) \vee 0<t\le t+\delta}{\sup}\,|\Delta J_s|^p\right] & \le 2^{p-2} L^{p-2}\E\left[\int_{(t-\delta) \vee 0}^{t+\delta}|U_s|^2_{\nu}ds\right]+2^{p-2}\E\left[\underset{0 \le s\le T}{\sup}\,|Y_s|^{p} \ind_{\{\underset{0 \le s\le T}{\sup}\,|Y_s|>L\}}\right]\nonumber \\ & \le 2^{p-2} L^{p-2} \left(R^2 \delta+\E\left[\int_0^T|U_s|_{\nu}^2\ind_{\{|U_s|_{\nu}>R\}}ds\right]\right) \nonumber
\\ & +2^{p-2} \E\left[\underset{0 \le s\le T}{\sup}\,|Y_s|^{p} \ind_{\{\underset{0 \le s\le T}{\sup}\,|Y_s|>L\}}\right],
\end{align*}
which implies
$$
\begin{array}{lll}
\underset{0\le t\le T}{\sup}\,\E\left[\underset{(t-\delta) \vee 0<t\le t+\delta}{\sup}\,|\Delta J_s|^p\right] & \le 2^{p-2} L^{p-2} \left(R^2 \delta+\E\left[\int_0^T|U_s|_{\nu}^2\ind_{\{|U_s|_{\nu}>R\}}ds\right] \right) \\ & +2^{p-2} \E\left[\underset{0 \le s\le T}{\sup}\,|Y_s|^{p} \ind_{\{\underset{0 \le s\le T}{\sup}\,|Y_s|>L\}}\right].
\end{array}
$$
Thus,
$$\begin{array}{lll}
\underset{\delta\to 0}\lim\,\underset{0\le t\le T}{\sup}\E\left[\underset{(t-\delta) \vee 0<t\le t+\delta}{\sup}\,|\Delta J_s|^p\right] & \le
2^{p-2} L^{p-2} \E\left[\int_0^T|U_s|_{\nu}^2\ind_{\{|U_s|_{\nu}>R\}}ds\right]\\ & +2^{p-2}\E\left[\underset{0 \le s\le T}{\sup}\,|Y_s|^{p} \ind_{\{\underset{0 \le s\le T}{\sup}\,|Y_s|>L\}}\right].
\end{array}
$$
Since $Y \in \mathcal{S}^p$ and $U\in \mathcal{H}_{\nu}^p$, 
$$
\underset{R\to \infty}\lim\,\E\left[\int_0^T|U_s|_{\nu}^2\ind_{\{|U_s|_{\nu}>R\}} ds\right]=0,\quad \underset{L \to \infty}\lim\,\E\left[\underset{0 \le s\le T}{\sup}\,|Y_s|^{p} \ind_{\{\underset{0 \le s\le T}{\sup}\,|Y_s|>L\}}\right]=0.
$$
Hence, by taking first the limit in $R \to \infty$ and then $L \to \infty$, we obtain
$$
\underset{\delta\to 0}\lim\,\underset{0\le t\le T}{\sup}\,\E\left[\underset{(t-\delta) \vee 0<t\le t+\delta}{\sup}\,|\Delta J_s|^p\right]=0.
$$\qed
\end{proof}

We now provide the following convergence result  for the solution $Y^{i,n}$ of \eqref{BSDEParticle}.

\medskip
\begin{Proposition}[Convergence of the $Y^{i,n}$'s]\label{chaos-Y-1}
Assume that for some $p\ge 2$,  $\gamma_1$ and $\gamma_2$ satisfy 
\begin{align}\label{smallnessCond-chaos-1}
  2^{\frac{5p}{2}-2}(\gamma_1^p+\gamma_2^p)<1.
\end{align}
Then, under Assumptions \ref{generalAssump}, \ref{Assump-PS} and \ref{Assump:chaos}, we have
\begin{equation}\label{chaos-Y-1-1}
\underset{n\to\infty}\lim \,\underset{0\le t\le T}{\sup}\,E[|Y^{i,n}_t-Y^i_t|^p]=0.
\end{equation}
In particular,
\begin{equation}\label{chaos-Y-1-2}
\underset{n\to\infty}\lim \, \|Y^{i,n}-Y^i\|_{\mathcal{H}^{p,1}}=0.
\end{equation}
\end{Proposition}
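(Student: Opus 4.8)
The plan is to represent both $Y^{i,n}$ and $Y^i$ as nonlinear Snell envelopes over the \emph{same} family of stopping times, to compare them through the a priori estimates on reflected BSDEs, and to close the resulting self-referential inequality by means of the exchangeability of the system and the smallness condition \eqref{smallnessCond-chaos-1}. For the particle system, \eqref{snell-i-2} gives directly
\[
Y^{i,n}_t=\underset{\tau\in\mathcal{T}^n_t}{\esssup\,}\mathcal{E}^{\textbf{f}^{i}\circ\textbf{Y}^n}_{t,\tau}\left[h(\tau,Y^{i,n}_{\tau},L_{n}[\textbf{Y}^n_s]_{s=\tau})\ind_{\{\tau<T\}}+\xi^{i,n}\ind_{\{\tau=T\}}\right].
\]
For the limiting copy $Y^i$, which solves \eqref{BSDEParticle-Theta} in the sub-filtration $\widetilde{\mathbb{F}}^i\subset\mathbb{F}^n$, I would note that its driver $f\circ Y^i$ and reward $h(\cdot,Y^i_\cdot,\P_{Y^i_\cdot})$ are $\widetilde{\mathbb{F}}^i$-adapted, so the reflected BSDE has the same solution when regarded in the larger filtration $\mathbb{F}^n$; hence $Y^i$ admits the analogous representation as an $\mathcal{E}^{f\circ Y^i}$-Snell envelope over $\mathcal{T}^n_t$. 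Using $|\esssup_\tau A_\tau-\esssup_\tau B_\tau|\le\esssup_\tau|A_\tau-B_\tau|$, the problem reduces to estimating, for each $\tau\in\mathcal{T}^n_t$, the difference of the two nonlinear expectations.

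Next I would apply the estimate (A.1) of Proposition \ref{estimates} to this difference. Since the two drivers differ only through the measure argument, it bounds $e^{p\beta t}|Y^{i,n}_t-Y^i_t|^p$, up to the constant $2^{p/2-1}$, by the conditional expectation of a driver term $\bigl(\int_t^\tau e^{2\beta(s-t)}|f(s,\cdot,L_n[\textbf{Y}^n_s])-f(s,\cdot,\P_{Y_s})|^2\,ds\bigr)^{p/2}$, an obstacle term $e^{p\beta\tau}|h(\tau,Y^{i,n}_\tau,L_n[\textbf{Y}^n_s]_{s=\tau})-h(\tau,Y^i_\tau,\P_{Y^i_s|s=\tau})|^p\ind_{\{\tau<T\}}$, and a terminal term $e^{p\beta T}|\xi^{i,n}-\xi^i|^p\ind_{\{\tau=T\}}$. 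By the Lipschitz property \eqref{f-h-lip-4} of $f$ and $h$, the first two are controlled by $|Y^{i,n}-Y^i|$ and $\mathcal{W}_p(L_n[\textbf{Y}^n_s],\P_{Y_s})$. The key step is the triangle inequality
\[
\mathcal{W}_p(L_n[\textbf{Y}^n_s],\P_{Y_s})\le\mathcal{W}_p(L_n[\textbf{Y}^n_s],L_n[\textbf{Y}_s])+\mathcal{W}_p(L_n[\textbf{Y}_s],\P_{Y_s}),
\]
where $\textbf{Y}_s=(Y^1_s,\dots,Y^n_s)$ are the i.i.d. copies of \eqref{BSDEParticle-Theta}. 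By \eqref{ineq-wass-1} the first (propagation) term is at most $(\tfrac1n\sum_{j}|Y^{j,n}_s-Y^j_s|^p)^{1/p}$, while the second vanishes in the sense of the law of large numbers of Theorem \ref{LLN-2}. The additional powers of two incurred by applying $(a+b)^p\le 2^{p-1}(a^p+b^p)$ to this splitting are precisely what upgrade the well-posedness threshold to the stronger constant $2^{\frac{5p}{2}-2}$ appearing in \eqref{smallnessCond-chaos-1}.

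To close the loop I would use the exchangeability of $(\Theta^{i,n})_i$ (Proposition \ref{exchange}) together with the i.i.d. property of $(Y^i)_i$, which makes $\|Y^{j,n}-Y^j\|^p_{\mathbb{L}^p_\beta}$ independent of $j$, so that $\tfrac1n\sum_j\|Y^{j,n}-Y^j\|^p_{\mathbb{L}^p_\beta}=\|Y^{i,n}-Y^i\|^p_{\mathbb{L}^p_\beta}$; this converts the coupling term into a genuine self-reference. Setting $\varepsilon_n:=C\bigl(\E[\sup_{t}\mathcal{W}_p^p(L_n[\textbf{Y}_t],\P_{Y_t})]+\E[|\xi^{i,n}-\xi^i|^p]\bigr)$, which tends to $0$ by Theorem \ref{LLN-2} and Assumption \ref{Assump:chaos}(ii), and proceeding exactly as in Step 2 of Theorems \ref{mf-case}--\ref{existParticle-2} (taking the supremum over $\tau\in\mathcal{T}^n_{T-\delta}$ via Lemma D.1 in \cite{KS98} and Fatou's lemma), I would obtain on $[T-\delta,T]$
\[
\|Y^{i,n}-Y^i\|^p_{\mathbb{L}^p_\beta[T-\delta,T]}\le\varepsilon_n+\Big(2^{\frac p2-1}\delta^{\frac p2}\eta^pC_f^p+2^{\frac{5p}2-2}(\gamma_1^p+\gamma_2^p)\Big)\,\|Y^{i,n}-Y^i\|^p_{\mathbb{L}^p_\beta[T-\delta,T]}.
\]
Since \eqref{smallnessCond-chaos-1} gives $2^{\frac{5p}2-2}(\gamma_1^p+\gamma_2^p)<1$, choosing $\delta$ small makes the bracket $<1$, so $\|Y^{i,n}-Y^i\|^p_{\mathbb{L}^p_\beta[T-\delta,T]}\le C\varepsilon_n\to0$; pasting over the finitely many intervals $[T-(j+1)\delta,T-j\delta]$ (the endpoint differences $\E[|Y^{i,n}_{T-j\delta}-Y^i_{T-j\delta}|^p]$ being already shown to vanish and feeding into $\varepsilon_n$ at the next step) yields \eqref{chaos-Y-1-1}, because $\sup_t\E[|Y^{i,n}_t-Y^i_t|^p]\le\|Y^{i,n}-Y^i\|^p_{\mathbb{L}^p_\beta}$. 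Finally \eqref{chaos-Y-1-2} follows at once, since for $p\ge2$ Hölder's inequality gives $\|Y^{i,n}-Y^i\|_{\mathcal{H}^{p,1}}^p=\E[(\int_0^T|Y^{i,n}_s-Y^i_s|^2\,ds)^{p/2}]\le T^{p/2}\sup_{t}\E[|Y^{i,n}_t-Y^i_t|^p]$.

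The principal obstacle is that the empirical measure $L_n[\textbf{Y}^n]$ couples all particles, so the estimate for $|Y^{i,n}-Y^i|$ feeds back on itself through $\tfrac1n\sum_j|Y^{j,n}-Y^j|^p$. Decoupling this via the Wasserstein triangle inequality (at the unavoidable cost of extra powers of two, hence the stronger threshold \eqref{smallnessCond-chaos-1}) and then absorbing the self-referential obstacle contribution using exchangeability and the smallness condition is the crux of the argument; the law of large numbers of Theorem \ref{LLN-2}, whose validity rests on the totally inaccessible character of the jumps (Assumption \ref{Assump:chaos}(iii)), is what forces the remaining term to vanish.
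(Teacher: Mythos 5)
Your proof is correct, and its core — representing $Y^{i,n}$ and $Y^i$ as nonlinear Snell envelopes over the \emph{same} class $\mathcal{T}^n_t$ (with the explicit remark, implicit in the paper, that the solution of \eqref{BSDEParticle-Theta} in the sub-filtration $\widetilde{\mathbb{F}}^i$ is also its solution in $\mathbb{F}^n$), applying the a priori estimates of Proposition \ref{estimates}, splitting $\mathcal{W}_p(L_n[\textbf{Y}^n_s],\P_{Y_s})$ by the triangle inequality into a self-referential part $\mathcal{W}_p(L_n[\textbf{Y}^n_s],L_n[\textbf{Y}_s])$ and a law-of-large-numbers part, and invoking exchangeability plus Theorem \ref{LLN-2} — coincides with the paper's proof, including the bookkeeping that produces the threshold $2^{\frac{5p}{2}-2}(\gamma_1^p+\gamma_2^p)<1$. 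Where you genuinely diverge is in how the self-referential inequality is closed. The paper never localizes in time: it averages the pathwise estimate over $i$ to form the process $V^{n,p}_t:=\frac{1}{n}\sum_j e^{p\beta t}|Y^{j,n}_t-Y^j_t|^p$, applies Lemma D.1 of \cite{KS98} to get $\lambda\,\E[V^{n,p}_t]\le \E[\int_t^T c\,V^{n,p}_s\,ds+\Gamma_{n,p}]$ with $\lambda:=1-2^{5p/2-2}(\gamma_1^p+\gamma_2^p)>0$, and then absorbs the time integral by the \emph{backward} Gronwall inequality (Proposition \ref{gronwall}), obtaining in one shot the quantitative bound $\sup_{0\le t\le T}\E[V^{n,p}_t]\le \frac{e^{K_p}}{\lambda}\E[\Gamma_{n,p}]$ on all of $[0,T]$, with exchangeability invoked only at the very end to identify $\E[V^{n,p}_t]=\E[e^{p\beta t}|Y^{i,n}_t-Y^i_t|^p]$. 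You instead absorb the time-integral term by taking $\delta$ small (as in Step~2 of Theorems \ref{mf-case} and \ref{existParticle-2}), which forces a contraction-plus-pasting argument over the intervals $[T-(j+1)\delta,T-j\delta]$, with the endpoint errors $\E[|Y^{i,n}_{T-j\delta}-Y^i_{T-j\delta}|^p]$ fed forward into the error term $\varepsilon_n$, and exchangeability used at the norm level ($\frac1n\sum_j\|Y^{j,n}-Y^j\|^p_{\mathbb{L}^p_\beta}=\|Y^{i,n}-Y^i\|^p_{\mathbb{L}^p_\beta}$, after pulling the supremum over stopping times inside the average, which is a legitimate upper bound). Both routes are valid; yours mirrors the well-posedness proofs and needs no backward Gronwall lemma, but requires justifying the Snell representation and obstacle domination on each subinterval (with terminal data $Y^{i,n}_{T-j\delta}\ge h(T-j\delta,\cdot,\cdot)$, which holds) and propagates constants of the form $C^{T/\delta}$, whereas the paper's Gronwall route treats $[0,T]$ globally, avoids any constraint on an interval length, and yields an explicit rate in terms of $\E[\Gamma_{n,p}]$. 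Two minor slips of no consequence: your drift coefficient on $[T-\delta,T]$ should carry the extra factor $2^{p-1}$ coming from the Wasserstein triangle-inequality splitting (i.e. $2^{3p/2-2}\delta^{p/2}\eta^pC_f^p$ rather than $2^{p/2-1}\delta^{p/2}\eta^pC_f^p$), which is harmless since $\delta$ is at your disposal; and the bound for \eqref{chaos-Y-1-2} should read $\|Y^{i,n}-Y^i\|^p_{\mathcal{H}^{p,1}}\le T^{p/2}\sup_t\E[|Y^{i,n}_t-Y^i_t|^p]$ via Jensen, exactly as you state.
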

\begin{proof}
For any $t\in[0,T]$, let $\vartheta \in \mathcal{T}^n_t$. By the estimates on BSDEs in Proposition \ref{estimates}, we have
\begin{equation}\label{Y-n-estimate}\begin{array} {lll}
|Y^{i,n}_\vartheta-Y^{i}_\vartheta|^{p} \\
\le \underset{\tau\in\mathcal{T}^n_\vartheta}{\esssup\,}\left|\mathcal{E}_{\vartheta,\tau}^{\textbf{f}^i \circ \textbf{Y}^{n}}[h(\tau,Y^{i,n}_\tau,L_{n}[\textbf{Y}^n_\tau])\ind_{\{\tau<T\}}+\xi^{i,n}\ind_{\{\tau=T\}}]- \mathcal{E}_{\vartheta,\tau}^{\textbf{f}^i \circ Y^i }[h(\tau,Y^i_\tau,\P_{Y_s|s=\tau})\ind_{\{\tau<T\}}+\xi^i \ind_{\{\tau=T\}}] \right|^{p} \\
\le \underset{\tau\in\mathcal{T}^n_\vartheta}{\esssup\,}2^{p/2-1}\E\left[\eta^p \left(\int_\vartheta^\tau e^{2 \beta (s-\vartheta)} |(\textbf{f}^i \circ \textbf{Y}^{n}) (s,\widehat{Y}^{i,\tau}_s,\widehat{Z}^{i,\tau}_s,\widehat{U}^{i,\tau}_s)-(\textbf{f}^i \circ Y^i) (s,\widehat{Y}^{i,\tau}_s, \widehat{Z}^{i,\tau}_s,\widehat{U}^{i,\tau}_s)|^2 ds\right)^{p/2} \right. \\ \left. \qquad\qquad \qquad +\left(e^{ \beta(\tau-\vartheta)}|h(\tau,Y^{i,n}_\tau,L_{n}[\textbf{Y}^n_\tau])-h(\tau,Y^i_\tau, \P_{Y_s|s=\tau})| + e^{\beta(T-\vartheta)}|\xi^{i,n}-\xi^i|\right)^p
|\mathcal{F}_\vartheta\right]\\
\le \underset{\tau\in\mathcal{T}^n_\vartheta}{\esssup\,}2^{p/2-1}\E\left[\int_\vartheta^{\tau}e^{p \beta (s-\vartheta)}T^{\frac{p-2}{2}}\eta^p C_f^{p}\mathcal{W}^p_p(L_{n}[\textbf{Y}_s^n],\P_{Y_s})ds +\left( \gamma_1 e^{ \beta(\tau-\vartheta)}|Y^{i,n}_{\tau}-Y^{i}_{\tau}| \right. \right. \\ \left.\left. \qquad\qquad\qquad+\gamma_2 e^{ \beta(\tau-\vartheta)} \mathcal{W}_p(L_{n}[\textbf{Y}_{\tau}^n],\P_{Y_s|s=\tau})  +e^{ \beta(T-\vartheta)}|\xi^{i,n}-\xi^i|\ind_{\{\tau=T\}}
\right)^p|\mathcal{F}_\vartheta\right],
 \end{array}
\end{equation}
where $\eta$, $\beta>0$ such that  $\eta \leq \frac{1}{C_f^2}$ and  $\beta \geq 2 C_f+\frac{3}{\eta}$, and $(\widehat{Y}^{i,\tau},\widehat{Z}^{i,\tau},\widehat{U}^{i,\tau})$ is the solution of the BSDE associated with driver $\textbf{f}^{i}\circ \Ybf^n$, terminal time $\tau$ and terminal condition $h(\tau,Y^{i,n}_\tau,L_{n}[\Ybf^n_s]_{s=\tau})\ind_{\{\tau<T\}}+\xi^{i,n}\ind_{\{\tau=T\}}$.

Therefore, we have
\begin{equation*}
e^{p\beta \vartheta}|Y^{i,n}_\vartheta-Y^{i}_\vartheta|^{p}\le \underset{\tau\in\mathcal{T}^n_\vartheta}{\esssup\,}\E[G^{i,n}_{\vartheta,\tau}|\mathcal{F}_\vartheta],
\end{equation*}
where
\begin{equation*}\begin{array} {lll}
G^{i,n}_{\vartheta,\tau}:=\int_\vartheta^{\tau}2^{3p/2-2}T^{\frac{p-2}{2}}\eta^p C_f^{p} \left(\frac{1}{n}\sum_{j=1}^ne^{p \beta s}|Y^{j,n}_s-Y^j_s|^p\right) ds
\\  \qquad\qquad\qquad +2^{\frac{5p}{2}-3}(\gamma_1^p+\gamma_2^p) \left(e^{p\beta\tau}|Y^{i,n}_{\tau}-Y^{i}_{\tau}|^p+
\frac{1}{n}\sum_{j=1}^ne^{p\beta\tau}|Y^{j,n}_{\tau}-Y^j_{\tau}|^p\right) 
\\  \qquad\qquad\qquad + 2^{p/2-1}\left(2^{2p-2}\gamma_2^p+2^{p-1}T^{p/2}\eta^p C_f^{p}\right)\underset{0\le t\le T}{\sup}\, e^{p \beta s}\mathcal{W}^p_p(L_{n}[\textbf{Y}_s],\P_{Y_s}) \\  \qquad\qquad\qquad + 2^{p/2-1}2^{2p-2}e^{\beta T}|\xi^{i,n}-\xi^i|^p\ind_{\{\tau=T\}}.
\end{array}
\end{equation*}
Setting
$V^{n,p}_t:=\frac{1}{n}\sum_{j=1}^ne^{p \beta t}|Y^{j,n}_t-Y^j_t|^p$
and 
$$
\Gamma_{n,p}:=2^{p/2-1}\left\{\left(2^{2p-2}\gamma_2^p+2^{p-1}T^{p/2}\eta^p C_f^{p}\right)\underset{0\le t\le T}{\sup}\, e^{p \beta s}\mathcal{W}^p_p(L_{n}[\textbf{Y}_s],\P_{Y_s})+ 2^{2p-2}e^{\beta T}|\xi^{i,n}-\xi^i|^p\ind_{\{\tau=T\}}\right\}.
$$
we obtain 
\begin{equation}\label{V-n-p}
V^{n,p}_\vartheta\le \underset{\tau\in\mathcal{T}^n_\vartheta}{\esssup\,}\E[\int_\vartheta^{\tau} 2^{3p/2-2}\eta^p T^{\frac{p-2}{2}} C_f^{p} V^{n,p}_sds+2^{5p/2-2}(\gamma_1^p+\gamma_2^p)V^{n,p}_{\tau}+\Gamma_{n,p}|\mathcal{F}_\vartheta].
\end{equation}
In view of Lemma D.1 in \cite{KS98}, we have 
$$
\E[V^{n,p}_\vartheta]\le 2^{5p/2-2}(\gamma_1^p+\gamma_2^p)\underset{\tau\in\mathcal{T}^n_\vartheta}{\sup}\,\E[V^{n,p}_{\tau}]+\E[\int_\vartheta^T 2^{3p/2-2}\eta^p C_f^{p} V^{n,p}_sds+\Gamma_{n,p}].
$$
Since for any $\vartheta \in \mathcal{T}^n_t$, $\mathcal{T}^n_\vartheta \subset \mathcal{T}^n_t$, we have
$$
\lambda \underset{t\le s\le T}{\sup}\,\E[V^{n,p}_{s}] \leq \lambda \underset{ \vartheta \in \mathcal{T}_t^n}{\sup}\,\E[V^{n,p}_{\vartheta}]\le \E[\int_t^T 2^{3p/2-2}\eta^p C_f^{p} V^{n,p}_sds+\Gamma_{n,p}].
$$
In particular,
$$
\lambda\E[V^{n,p}_{t}]\le \E[\int_t^T 2^{3p/2-2}\eta^p C_f^{p} V^{n,p}_sds+\Gamma_{n,p}],
$$
where $\lambda:=1-2^{5p/2-2}(\gamma_1^p+\gamma_2^p) >0$ by the assumption \eqref{smallnessCond-chaos-1}. Now, by the backward Gronwall inequality (see Proposition \ref{gronwall} below), applied to $g(t):=\E[V^{n,p}_{t}]$ which is bounded since $V^{n,p}\in\mathcal{S}_1$,  $c:=\E[\Gamma_{n,p}]/\lambda$ and $\alpha(t):=\frac{1}{\lambda}2^{3p/2-2}\eta^p C_f^{p}$, we obtain
$$
\E[V^{n,p}_{t}]\le  \frac{e^{K_p}}{\lambda}\E[\Gamma_{n,p}]
$$
where $K_p:=\frac{1}{\lambda}2^{3p/2-2}\eta^p C_f^{p}T$. Hence, since the choice of $t\in[0,T]$ is arbitrary, we must have
$$
\underset{0\le t\le T}{\sup}\,\E[V^{n,p}_{t}]\le  \frac{e^{K_p}}{\lambda}\E[\Gamma_{n,p}].
$$

But, in view of the exchangeability of the processes $(Y^{i,n},Y^i),i=1,\ldots,n$ (see Proposition \ref{exchange}), we have, $\E[V^{n,p}_{t}]=\E[e^{p\beta t}|Y^{i,n}_t-Y^i_t|^p]$. Thus,
$$
\underset{0\le t\le T}{\sup}\,\E[e^{p\beta t}|Y^{i,n}_t-Y^i_t|^p]\le \frac{e^{K_p}}{\lambda}\E[\Gamma_{n,p}] \to 0
$$
as $n\to\infty$,
in view of Theorem \ref{LLN-2} and Assumption \ref{Assump:chaos}, as required.  \qed
\end{proof}
\begin{Theorem}[Propagation of chaos of the $Y^{i,n}$'s]\label{prop-y} 
Under the assumptions of Proposition \ref{chaos-Y-1}, the solution $Y^{i,n}$ of the particle system \eqref{BSDEParticle} satisfies the propagation of chaos property, i.e. for any fixed positive integer $k$, 
$$
\underset{n\to\infty}{\lim} \text{Law}\,(Y^{1,n},Y^{2,n},\ldots,Y^{k,n})=\text{Law}\,(Y^1,Y^2,\ldots,Y^k).
$$
\end{Theorem}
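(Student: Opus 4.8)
The plan is to exploit the fact that the particle solutions $Y^{i,n}$ and the i.i.d. copies $Y^i$ are constructed on the \emph{same} probability space, so that chaoticity reduces to a coupling estimate rather than an abstract weak-convergence argument. Fix $k\ge 1$. Since the $Y^i$ are independent with common law $\P_Y$, one has $\mathrm{Law}(Y^1,\dots,Y^k)=\P_Y^{\otimes k}$, and it suffices to show that $(Y^{1,n},\dots,Y^{k,n})\to(Y^1,\dots,Y^k)$ in probability in the product Skorohod space $\mathbb{D}^k$; convergence in probability then forces convergence in law, which is exactly the asserted identity $\lim_n\mathrm{Law}(Y^{1,n},\dots,Y^{k,n})=\mathrm{Law}(Y^1,\dots,Y^k)$. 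Equivalently one could combine the exchangeability of Proposition \ref{exchange} with Sznitman's criterion \cite{s91}, reducing the statement to convergence in probability of the empirical measure $\frac1n\sum_i\delta_{Y^{i,n}}$ towards $\P_Y$; but the coupling route is the most direct here, since it only requires handling finitely many fixed indices.

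By a union bound it is enough to treat each index $i\le k$ separately, i.e. to prove $d^o(Y^{i,n},Y^i)\to0$ in probability. Using the elementary bound $d^o(x,y)\le|x-y|_T$ recalled in Section \ref{convergence}, this follows once I establish the \emph{uniform-in-time} convergence
\[
\lim_{n\to\infty}\E\big[|Y^{i,n}-Y^i|_T^p\big]=0,
\]
which strengthens the pointwise estimate \eqref{chaos-Y-1-1} of Proposition \ref{chaos-Y-1}. To obtain it I would revisit the difference of the two reflected BSDEs with jumps solved by $Y^{i,n}$ and $Y^i$ and apply $\mathcal{S}^p$ a priori estimates (Burkholder--Davis--Gundy for the martingale parts, together with the linearisations of $f$ and $h$ already used in the proofs of Theorem \ref{LLN-2} and Proposition \ref{chaos-Y-1}). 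The driving data converge in the right norms: the terminal condition by Assumption \ref{Assump:chaos}(ii), while the driver and obstacle discrepancies are dominated by $\sup_t\mathcal{W}_p^p(L_n[\textbf{Y}^n_t],\P_{Y_t})$, which tends to $0$ upon combining the law of large numbers \eqref{GC-2} of Theorem \ref{LLN-2} (controlling $L_n[\textbf{Y}_t]$ versus $\P_{Y_t}$) with the marginal convergence \eqref{chaos-Y-1-1} and exchangeability (controlling $L_n[\textbf{Y}^n_t]$ versus $L_n[\textbf{Y}_t]$). The self-referential terms $\E[\sup_s|Y^{i,n}_s-Y^i_s|^p]$ created by the dependence of the obstacle on the solution are then absorbed by a backward Gronwall argument under the smallness condition \eqref{smallnessCond-chaos-1}, exactly as in Proposition \ref{chaos-Y-1}.

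The hard part will be precisely this passage from fixed-time $L^p$-control to uniform-in-time, hence Skorohod, control. Fixed-time closeness $\sup_t\E[|Y^{i,n}_t-Y^i_t|^p]\to0$ does \emph{not} by itself rule out fluctuations that are negligible at each deterministic instant yet survive in the supremum, and the jumps are exactly what make this delicate. This is where the Skorohod-modulus analysis of Section \ref{convergence} becomes essential: the estimate \eqref{x-y-esti} and the inequality \eqref{w-w}, together with Assumption \ref{Assump:chaos}(iii) (the component $Y$ has only totally inaccessible jumps, so that the predictable process $K$ is continuous), control the path oscillations uniformly in $n$ and allow one to upgrade the pointwise bound to a uniform one; the jump computation carried out at the end of the proof of Theorem \ref{LLN-2} (the estimate of $\Delta J_s$) is the template to reuse. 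Once $\E[|Y^{i,n}-Y^i|_T^p]\to0$ is secured for each $i\le k$, the reduction of the first paragraph closes the argument.
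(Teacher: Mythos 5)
Your reduction (coupling on a common probability space, then $d^o(x,y)\le|x-y|_T$, then a union bound over $i\le k$) is sound as far as it goes, but it aims at a strictly stronger statement than the one the paper proves, and the step you yourself flag as ``the hard part'' --- upgrading the fixed-time bound \eqref{chaos-Y-1-1} to $\lim_n\E\big[|Y^{i,n}-Y^i|_T^p\big]=0$ --- cannot be closed ``exactly as in Proposition \ref{chaos-Y-1}'' under the hypothesis \eqref{smallnessCond-chaos-1}. The obstruction is quantitative. To pass from $\sup_t\E[\cdot]$ to $\E[\sup_t\cdot]$ one must run the Snell/esssup representation through a Doob-type maximal inequality, and this multiplies the self-referential obstacle contribution by the factor $\left(\frac{p}{p-\kappa}\right)^{p/\kappa}$; the backward Gronwall absorption then requires $2^{1-\frac{p}{2}}\left(\frac{p}{p-\kappa}\right)^{-p/\kappa}-2^{2p-1}(\gamma_1^p+\gamma_2^p)>0$ with $p>\kappa\ge 2$, which is essentially the stronger condition \eqref{smallnessCond-chaos} and in particular excludes $p=2$. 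This is precisely why the paper introduces \eqref{smallnessCond-chaos} in Proposition \ref{chaos-1}: under the assumptions of Proposition \ref{chaos-Y-1} (which allow $p=2$ and assume only \eqref{smallnessCond-chaos-1}) there is no room to absorb the Doob constant. Moreover, the tools you invoke to rescue the passage go in the wrong direction: inequality \eqref{x-y-esti} bounds pointwise differences \emph{given} Skorohod closeness, and the $\Delta J$ estimates in Theorem \ref{LLN-2} control the jumps of the limit process $Y$, not the path supremum of the differences $Y^{i,n}-Y^i$; no analysis of the jumps of $Y$ converts $\sup_t\E[|Y^{i,n}_t-Y^i_t|^p]\to0$ into $\E[\sup_t|Y^{i,n}_t-Y^i_t|^p]\to0$, since the dangerous fluctuations sit in the martingale and reflection parts of the difference.

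The paper's own proof sidesteps all of this by metrizing the laws not on the Skorohod space but on the Hilbert space $\mathbb{H}^2=L^2([0,T];\R)$, equipped with the $2$-Wasserstein distance $D_{\mathbb{H}^2}$. The same-probability-space coupling immediately gives
\begin{equation*}
D^2_{\mathbb{H}^2}\big(\mathrm{Law}(Y^{1,n},\dots,Y^{k,n}),\,\mathrm{Law}(Y^{1},\dots,Y^{k})\big)\le k\sup_{i\le k}\|Y^{i,n}-Y^i\|^2_{\mathcal{H}^{2,1}},
\end{equation*}
and $\|Y^{i,n}-Y^i\|_{\mathcal{H}^{2,1}}\to0$ follows from \eqref{chaos-Y-1-2}, i.e.\ from the fixed-time convergence \eqref{chaos-Y-1-1} plus H\"older in time; no uniform-in-time control, no tightness on $\mathbb{D}$, and no maximal inequality is needed, so the weak condition \eqref{smallnessCond-chaos-1} with $p\ge2$ suffices. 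If you insist on chaos in a path-supremum or Skorohod sense, then the right ingredient is the $\mathcal{S}^p$ convergence of Proposition \ref{chaos-1}, and the correct hypothesis is \eqref{smallnessCond-chaos} with $p>2$ --- that is the content of Theorem \ref{prop-y-u-z}, not of Theorem \ref{prop-y}. As written, your proposal proves Theorem \ref{prop-y} only under the assumptions of Proposition \ref{chaos-1}, leaving a genuine gap under the stated assumptions.
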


\begin{proof}
 Set $P^{k,n}:=\text{law}\,(Y^{1,n},Y^{2,n},\ldots,Y^{k,n})$ and $P^{\otimes k}:=\text{Law}\,(Y^1,Y^2,\ldots,Y^k)$.
Consider the Wasserstein metric on $\mathcal{P}_2(\mathbb{H}^2)$ defined by 
\begin{equation}\label{W-Y} 
D_{\mathbb{H}^2}(P,Q)=\inf\left\{\left(\int_{\mathbb{H}^2\times \mathbb{H}^2}  \|y-y^{\prime}\|_{\mathbb{H}^2}^2 R(dy,dy^{\prime})\right)^{1/2}\right\},
\end{equation}
over $R\in\mathcal{P}(\mathbb{H}^2\times \mathbb{H}^2)$ with marginals $P$ and $Q$. Note that, since $p \geq 2$, it is enough to show for $D_{\mathbb{H}^2}(P,Q)$. Since $\mathbb{H}^2$ is a Polish space, $(\mathcal{P}_2(\mathbb{H}^2), D_{\mathbb{H}^2})$ is a Polish space and induces the topology of weak convergence (see Theorem 6.9 in \cite{Villani}). Thus, we obtain the propagation of chaos property for the $Y^{i,n}$'s if we can show that 
$\underset{n\to\infty}{\lim}D_{\mathbb{H}^2}(P^{k,n},P^{\otimes k})=0$. But, this follows from the fact that 
$$
D^2_{\mathbb{H}^2}(P^{k,n},P^{\otimes k})\le k\sup_{i\le k}\|Y^{i,n}-Y^i\|_{\mathcal{H}^{2,1}}^2
$$
and \eqref{chaos-Y-1-2} for $p=2$. \qed

\end{proof}

\medskip
In the next proposition we provide a convergence result for the whole solution $(Y^{i,n},Z^{i,n},U^{i,n},K^{i,n})$ of \eqref{BSDEParticle}.

\begin{Proposition}\label{chaos-1} Assume that, for some $p>2$, $\gamma_1$ and $\gamma_2$ satisfy 
\begin{align}\label{smallnessCond-chaos}
  2^{5p/2-2}(\gamma_1^p+\gamma_2^p)<\left(\frac{p-\kappa}{2p}\right)^{p/\kappa}
\end{align}
for some $\kappa\in [2,p)$. 

Then, under Assumptions \ref{generalAssump}, \ref{Assump-PS} and \ref{Assump:chaos}, we have
\begin{equation}\label{chaos-1-1}\begin{array}{ll}
\underset{n\to\infty}\lim \,\left( \|Y^{i,n}-Y^i\|_{\sp}+ \|Z^{i,n}-Z^i{\bf e}_i\|_{\mathcal{H}^{p,n}}+ \|U^{i,n}-U^i{\bf e}_i\|_{\mathcal{H}^{p,n}_\nu}+ \|K^{i,n}-K^i\|_{\sp}\right)=0,
\end{array}
\end{equation}
where ${\bf e}_1,\ldots,{\bf e}_n$ are unit vectors in $\R^n$.
\end{Proposition}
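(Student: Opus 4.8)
The plan is to bootstrap from the pointwise-in-time convergence already granted by Proposition~\ref{chaos-Y-1} to the full $\mathcal{S}^p$ convergence of the $Y$-component, and then to read off the convergence of $(Z,U,K)$ from the dynamics. By \eqref{chaos-0} it is enough to control the four differences for a single index, which by the exchangeability of Proposition~\ref{exchange} I may take to be $i=1$. Write $\bar Y:=Y^{i,n}-Y^i$, $\bar Z^j:=Z^{i,j,n}-Z^i\delta_{ij}$, $\bar U^j:=U^{i,j,n}-U^i\delta_{ij}$, $\bar K:=K^{i,n}-K^i$ and $\bar\xi:=\xi^{i,n}-\xi^i$, so that $\bar Y$ solves the BSDE with terminal value $\bar\xi$, generator the difference of the two generators in \eqref{BSDEParticle} and \eqref{BSDEParticle-Theta}, and finite-variation part $\bar K$ (a difference of two increasing processes). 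Note that \eqref{smallnessCond-chaos} forces $2^{5p/2-2}(\gamma_1^p+\gamma_2^p)<1$, so Proposition~\ref{chaos-Y-1} applies and gives $\sup_{t}\mathbb{E}[|Y^{i,n}_t-Y^i_t|^p]\to0$. All measure terms will be handled through the splitting $\mathcal{W}_p(L_n[\mathbf{Y}^n_s],\mathbb{P}_{Y_s})\le \mathcal{W}_p(L_n[\mathbf{Y}^n_s],L_n[\mathbf{Y}_s])+\mathcal{W}_p(L_n[\mathbf{Y}_s],\mathbb{P}_{Y_s})$, whose first term is dominated via \eqref{ineq-wass-1} and exchangeability by $\sup_t\mathbb{E}[|Y^{i,n}_t-Y^i_t|^p]$ and whose second term vanishes by Theorem~\ref{LLN-2}.

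For the $\mathcal{S}^p$ bound I would apply It\^o's formula to $e^{p\beta t}|\bar Y_t|^p$ (licit since $p>2$), integrate from $t$ to $T$, take $\sup_{t\le T}$ and then expectations. The terminal term $\mathbb{E}[e^{p\beta T}|\bar\xi|^p]$ tends to $0$ by Assumption~\ref{Assump:chaos}(ii); the generator term is absorbed using the Lipschitz bounds, with the $C_f$–contributions controlled by choosing $\beta$ large and the Wasserstein contribution shown to vanish as above; the local-martingale parts are treated by Burkholder--Davis--Gundy, producing a small multiple $\varepsilon\,\mathbb{E}[\sup_t|\bar Y_t|^p]$ together with the $\mathcal{H}^p$-energies of $\bar Z$ and $\bar U$. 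The delicate term is the reflection integral $p\int_t^T e^{p\beta s}|\bar Y_{s^-}|^{p-2}\bar Y_{s^-}\,d\bar K_s$. Here I would use the Skorohod flatness together with the two obstacle inequalities: on the support of $dK^{i,n}$ one has $\bar Y_{s^-}\le \gamma_1|\bar Y_{s^-}|+\gamma_2\,\mathcal{W}_p(L_n[\mathbf{Y}^n_{s^-}],\mathbb{P}_{Y_{s^-}})$, and symmetrically $-\bar Y_{s^-}$ is bounded on the support of $dK^i$. Since $\gamma_1<1$, this yields $|\bar Y_{s^-}|\le \tfrac{\gamma_2}{1-\gamma_1}\,\mathcal{W}_p(L_n[\mathbf{Y}^n_{s^-}],\mathbb{P}_{Y_{s^-}})$ on the relevant supports, so the reflection integral is bounded by the small Wasserstein quantity times $K^{i,n}_T+K^i_T$ and tends to $0$ in expectation after a H\"older split, using the uniform $\mathcal{S}^p$-bounds on the increasing processes.

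Collecting these estimates gives a coupled system of inequalities for $\mathbb{E}[\sup_t|\bar Y_t|^p]$, $\|\bar Z\|_{\mathcal{H}^{p,n}}^p$ and $\|\bar U\|_{\mathcal{H}^{p,n}_\nu}^p$. Passing from the stopping-time-indexed bound underlying Proposition~\ref{chaos-Y-1} to the pathwise supremum, and converting the weighted energy $\int_0^T|\bar Y_s|^{p-2}|\bar Z_s|^2\,ds$ (and its jump analogue), which sit on the left-hand side of the It\^o identity, into the genuine norms $\|\bar Z\|_{\mathcal{H}^{p,n}}^p$ and $\|\bar U\|_{\mathcal{H}^{p,n}_\nu}^p$ via an a priori estimate of the type of Proposition~\ref{estimates}, requires a Young/maximal inequality with a free exponent; this is exactly where the auxiliary $\kappa\in[2,p)$ and the factor $(\tfrac{p-\kappa}{2p})^{p/\kappa}$ of \eqref{smallnessCond-chaos} enter, the strengthened smallness condition being what allows the self-referential terms to be absorbed and the system to be closed. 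This yields $\|Y^{i,n}-Y^i\|_{\mathcal{S}^p}\to0$ and, simultaneously, $\|Z^{i,n}-Z^i{\bf e}_i\|_{\mathcal{H}^{p,n}}\to0$ and $\|U^{i,n}-U^i{\bf e}_i\|_{\mathcal{H}^{p,n}_\nu}\to0$. Finally, isolating $\bar K$ from the two equations and bounding $\sup_t|\bar K_t|^p$ by the terminal, generator and (BDG-controlled) martingale contributions gives $\|K^{i,n}-K^i\|_{\mathcal{S}^p}\to0$; together with \eqref{chaos-0} this finishes the proof.

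I expect the reflection integral to be the main obstacle. Because the obstacle depends simultaneously on the solution and on its (empirical) law, the monotonicity argument is unavailable, so one must invoke the flatness condition in its predictable/left-limit form to replace $\bar Y$ on the support of $d\bar K$ by the small Wasserstein term, while carefully tracking jumps at both predictable and totally inaccessible times (the continuity of the limiting $K$ being ensured by Assumption~\ref{Assump:chaos}). The secondary difficulty, specific to $p>2$, is the conversion of the weighted $Z,U$-energy into the full $\mathcal{H}^p$-norms, which is what makes \eqref{smallnessCond-chaos} strictly sharper than the condition \eqref{smallnessCond-chaos-1} used in Proposition~\ref{chaos-Y-1}.
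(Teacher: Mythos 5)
Your reduction via \eqref{chaos-0} and exchangeability, and your observation that \eqref{smallnessCond-chaos} implies \eqref{smallnessCond-chaos-1} so that Proposition \ref{chaos-Y-1} applies, are fine; but the core of your argument --- It\^o's formula on $e^{p\beta t}|\bar Y_t|^p$ with the reflection integral handled through the Skorohod flatness condition --- does not close: it is circular. On the support of $dK^{i,n}$ the flatness condition does give $(\bar Y_{s^-})^+\le\frac{\gamma_2}{1-\gamma_1}\mathcal{W}_p(L_n[\mathbf{Y}^n_{s^-}],\mathbb{P}_{Y_{s^-}})$ (and symmetrically on the support of $dK^i$), so the reflection integral is bounded by
\begin{equation*}
\Bigl(\tfrac{\gamma_2}{1-\gamma_1}\Bigr)^{p-1}\,\sup_{s}\mathcal{W}_p^{p-1}\bigl(L_n[\mathbf{Y}^n_s],\mathbb{P}_{Y_s}\bigr)\,\bigl(K^{i,n}_T+K^i_T\bigr).
\end{equation*}
After H\"older, you need $\mathbb{E}\bigl[\sup_s\mathcal{W}_p^{p}(L_n[\mathbf{Y}^n_s],\mathbb{P}_{Y_s})\bigr]\to0$; splitting through $L_n[\mathbf{Y}_s]$, the law-of-large-numbers part vanishes by Theorem \ref{LLN-2}, but the remaining part is controlled (via \eqref{ineq-wass-1} and exchangeability) only by $X_n:=\mathbb{E}[\sup_t|Y^{i,n}_t-Y^i_t|^p]$ --- precisely the quantity you are trying to prove converges to zero. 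Proposition \ref{chaos-Y-1} gives only $\sup_t\mathbb{E}[|\bar Y_t|^p]\to0$, which does not dominate this $\mathbb{E}[\sup_t(\cdot)]$ quantity. Your It\^o inequality therefore has the form $X_n\le\varepsilon_n+\delta X_n+C\,X_n^{(p-1)/p}$, where $C$ is proportional to the non-vanishing uniform $L^p$-bound of $K^{i,n}_T+K^i_T$; since the exponent $(p-1)/p<1$, this last term can neither be absorbed into the left-hand side nor made to vanish, and the inequality is consistent with $X_n$ staying bounded away from $0$. (In the classical stability argument for reflected BSDEs the flatness trick works because the obstacle difference is exogenous data converging to zero; here the obstacle difference is endogenous, which is exactly the obstruction.)

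The paper closes this gap by never applying It\^o to $|\bar Y|^p$: both $Y^{i,n}$ and $Y^i$ are represented as nonlinear Snell envelopes, which removes $K$ from the estimate altogether; one then bounds $e^{p\beta t}|\bar Y_t|^{p}\le\mathbb{E}[\mathcal{G}^{i,n}_{u,T}\,|\,\mathcal{F}_t]^{p/\kappa}$ for a stopping-time-independent random variable $\mathcal{G}^{i,n}_{u,T}$, and applies Doob's maximal inequality in $L^{p/\kappa}$. This is the actual role of the free exponent $\kappa\in[2,p)$: it makes $p/\kappa>1$ so that Doob applies, and the Doob constant $\bigl(\frac{p}{p-\kappa}\bigr)^{p/\kappa}$ is what forces the strengthened smallness condition \eqref{smallnessCond-chaos} --- not the conversion of the weighted energies $\int|\bar Y_s|^{p-2}|\bar Z_s|^2ds$ into $\mathcal{H}^p$-norms, as you suggest. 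The estimate is then closed with exchangeability, the backward Gronwall inequality and Theorem \ref{LLN-2}. Once $\|\bar Y\|_{\mathcal{S}^p}\to0$ is established this way, the convergence of $(Z,U,K)$ follows from It\^o on $|\bar Y_t|^2$ raised to the power $p/2$, BDG and a maximal inequality for the purely discontinuous martingale part, with the reflection cross term handled by the crude bound $\bigl|\int_0^T\bar Y_{s}\,d\bar K_s\bigr|\le\sup_s|\bar Y_s|\,\bigl(K^{i,n}_T+K^i_T\bigr)$; no flatness argument is needed at that stage, precisely because the $\mathcal{S}^p$-convergence of $Y$ is already in hand.
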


\begin{Remark}  $ $
\begin{itemize}
    \item If the obstacle $h$ is independent of the solution $Y$, the limit result \eqref{chaos-1-1} remains valid for $p\ge 2$ without the condition \eqref{smallnessCond-chaos}.
\end{itemize}
\end{Remark}

\begin{proof} 
\uline{Step 1.}   Let us first show that $\underset{n\to\infty}{\lim}\|Y^{i,n}-Y^i\|_{\mathcal{S}^p_{\beta}}=0$.

\medskip
Let $u \in [0,T]$. In view of \eqref{Y-n-estimate}, for any $\kappa \ge 2$ and any $u \leq t\leq T$, we have
\begin{equation*}\begin{array} {lll}
|Y^{i,n}_t-Y^{i}_t|^{\kappa} 
\le \underset{\tau\in\mathcal{T}^n_t}{\esssup\,}\,\, 2^{\kappa/2-1}\E\left[\int_t^{\tau}e^{\kappa \beta (s-t)}T^{\frac{\kappa-2}{2}}\eta^{\kappa} C_f^{\kappa}\mathcal{W}^{\kappa}_{p}(L_{n}[\textbf{Y}_s^n],\P_{Y_s})ds +\left( 
\gamma_1 e^{ \beta(\tau-t)}|Y^{i,n}_{\tau}-Y^{i}_{\tau}| \right. \right. \\ \left.\left. \qquad\qquad\qquad\qquad\qquad+\gamma_2 e^{ \beta(\tau-t)} \mathcal{W}_{p}(L_{n}[\textbf{Y}_{\tau}^n],\P_{Y_s|s=\tau})  +e^{ \beta(T-t)}|\xi^{i,n}-\xi^i|\ind_{\{\tau=T\}}
\right)^{\kappa}|\mathcal{F}_t\right],
 \end{array}
\end{equation*}
where $\eta$, $\beta>0$ such that  $\eta \leq \frac{1}{C_f^2}$ and  $\beta \geq 2 C_f+\frac{3}{\eta}$.

Therefore, for any $p>\kappa \ge 2$, we have
\begin{equation*}
e^{p\beta t}|Y^{i,n}_t-Y^{i}_t|^{p}\le \E[\mathcal{G}^{i,n}_{u,T}|\mathcal{F}_t]^{p/\kappa},
\end{equation*}
where
$$
\begin{array}{lll}
\mathcal{G}_{u,T}^{i,n}:=2^{\kappa/2-1}\left(\int_u^T T^{\frac{\kappa-2}{2}}e^{\kappa \beta s}\eta^{\kappa} C_f^{\kappa}\mathcal{W}^{\kappa}_{p}(L_{n}[\textbf{Y}_s^n],\P_{Y_s})ds \right. \\ \left. \qquad\qquad\qquad +\left( 
\gamma_1 \underset{u\le s\le T}{\sup}\,e^{\beta s}|Y^{i,n}_{s}-Y^{i}_{s}| +\gamma_2 \underset{u\le s\le T}{\sup}\,e^{ \beta s} \mathcal{W}_{p}(L_{n}[\textbf{Y}_{s}^n],\P_{Y_s}) +e^{ \beta T}|\xi^{i,n}-\xi^i|\right)^{\kappa}\right).
\end{array}
$$
Thus, by Doob's inequality, we get
\begin{equation}\label{Y-n-Doob}
\E[\underset{u\le t\le T}{\sup}\,e^{p\beta t}|Y^{i,n}_t-Y^{i}_t|^{p}]\le \left(\frac{p}{p-\kappa}\right)^{p/\kappa}\E\left[\left(\mathcal{G}_{u,T}^{i,n}\right)^{p/\kappa}\right].
\end{equation}

Therefore, we have
$$\begin{array}{lll}
2^{1-\frac{p}{2}}\left(\mathcal{G}_{u,T}^{i,n}\right)^{p/\kappa}\le C_1\int_u^T \underset{s\le t\le T}{\sup}\,e^{p\beta t}\mathcal{W}_p^p(L_{n}[\textbf{Y}_{t}^n],L_{n}[\textbf{Y}_{t}])ds \\ \qquad\qquad\qquad\qquad +2^{p-1}\left(\gamma_1\underset{u\le t\le T}{\sup}\,e^{\beta t}|Y^{i,n}_t-Y^{i}_t|+\gamma_2\underset{u\le t\le T}{\sup}\,e^{\beta t}\mathcal{W}_{p}(L_{n}[\textbf{Y}_{s}^n],L_{n}[\textbf{Y}_{s}])\right)^p +\Lambda_n(u)
\end{array}
$$
where $C_1:=2^{p-1}T^{\frac{p}{2}}\eta^pC_f^p$ and
$$
\Lambda_n:=C_1 \underset{0\le s\le T}{\sup}\,e^{ p\beta s} \mathcal{W}_{p}(L_{n}[\textbf{Y}_{s}],\P_{Y_s})+2^{p-1}\left(
\gamma_2 \underset{0\le s\le T}{\sup}\,e^{ \beta s} \mathcal{W}_{p}(L_{n}[\textbf{Y}_{s}],\P_{Y_s}) +e^{ \beta T}|\xi^{i,n}-\xi^i|\right)^{p}.
$$
\noindent But, in view of the exchangeability of the processes $(Y^{i,n},Y^i),i=1,\ldots,n$ (see Proposition \ref{exchange}), for each $s \in [0,T]$ and $u \in [0,T]$ we have, 
$$
\E\left[\underset{u\le t\le s}{\sup}e^{\beta p t}\mathcal{W}_p^p(L_n[\textbf{Y}_t^n],L_n[\textbf{Y}_t])\right]\leq \E\left[\underset{u\le t\le s}{\sup}e^{\beta p t}|Y^{i,n}_t-Y^{i}_t|^p\right],\quad i=1,\ldots,n,
$$
and so
$$\begin{array}{lll}
2^{1-\frac{p}{2}}\E[\left(\mathcal{G}_{u,T}^{i,n}\right)^{p/\kappa}]\le C_1\E\left[\int_u^T \underset{s\le t\le T}{\sup}\,e^{p\beta t}\mathcal{W}_p^p(L_{n}[\textbf{Y}_{t}^n],L_{n}[\textbf{Y}_{t}])ds\right]+\E\left[\Lambda_n\right]\\ \qquad\qquad\qquad\qquad\qquad\qquad
+2^{2p-1}(\gamma_1^p+\gamma_2^p)\E[\underset{u\le t\le T}{\sup}\,e^{p\beta t}|Y^{i,n}_t-Y^{i}_t|^{p}].
\end{array}
$$
Therefore, \eqref{Y-n-Doob} becomes
\begin{equation*}
\begin{array} {lll}
\mu\E\left[\underset{u\le t\le
T}{\sup}e^{p\beta t}|Y^{i,n}_t-Y^{i}_t|^p\right] \le C_1\E\left[\int_u^T \underset{s\le t\le T}{\sup}e^{\beta p t}|Y^{i,n}_t-Y^{i}_t|^pds\right]+\E\left[\Lambda_n\right].
\end{array}
\end{equation*}
where $\mu:=2^{1-\frac{p}{2}}\left(\frac{p}{p-\kappa}\right)^{-p/\kappa}-2^{2p-1}(\gamma_1^p+\gamma_2^p)$.
Using the condition \eqref{smallnessCond-chaos}, to see that
$\mu>0$, and the backward Gronwall inequality (see Proposition \ref{gronwall} below), we finally  obtain
\begin{equation*}
\E\left[\underset{0\le t\le
T}{\sup}e^{p\beta t}|Y^{i,n}_t-Y^{i}_t|^p\right] \le \frac{e^{\frac{C_1}{\mu}T}}{\mu}\E\left[\Lambda_n\right].
\end{equation*}
Next, by \eqref{GC-2} together with Assumption \eqref{Assump:chaos} (iii) we have
$$
\underset{n\to\infty}\lim\, \E\left[\Lambda_n\right]= 0,
$$
which yields the desired result.

\medskip
\noindent \uline{Step 2.} We now show that $\underset{n\to\infty}{\lim}\|Z^{i,n}-Z^i{\bf e}_i\|_{\mathcal{H}^{p,n}}=0,    \,\underset{n\to\infty}{\lim}\|U^{i,n}-U^i{\bf e}_i\|_{\mathcal{H}^{p,n}_\nu}=0$ and $\underset{n\to\infty}{\lim}\|K^{i,n}-K^i\|_{\mathcal{S}^{p}}=0$. We first prove that $\underset{n\to\infty}{\lim}\|Z^{i,n}-Z^i{\bf e}_i\|_{\mathcal{H}^{p,n}}=0$ and    $\underset{n\to\infty}{\lim}\|U^{i,n}-U^i{\bf e}_i\|_{\mathcal{H}^{p,n}_\nu}=0$. For $s \in [0,T]$, denote $\delta Y^{i,n}_s:= Y_s^{i,n}-Y_s^{i}$, $\delta Z_s^{i,n}:= Z_s^{i,n}-Z_s^{i}{\bf e}_i$, $\delta U^{i,n}_s:= U_s^{i,n}-U_s^{i}{\bf e}_i$, $\delta K_s^{i,n}:= K_s^{i,n}-K_s^{i}$, $\delta f^{i,n}_s:=f(s,Y_s^{i,n}, Z_s^{i,i,n}, U_s^{i,i,n}, L_{n}[\textbf{Y}_s^n])-f(s,Y_s^{i}, Z_s^{i}, U_s^{i}, \mathbb{P}_{Y^{i}_s})$, $\delta \xi^{i,n}:=\xi^{i,n}-\xi^{i}$ and $\delta h^{i,n}_s:=h(s,Y_s^{i,n},L_{n}[\textbf{Y}^n_s])-h(s,Y_s^{i}, \mathbb{P}_{Y^{i}_s})$. By applying It\^{o}'s formula to $|\delta Y^{i,n}_t|^2$, we obtain
\begin{align*}
    &|\delta Y^{i,n}_t|^2+\int_t^T |\delta Z^{i,n}_s|^2ds+\int_t^T\int_{\R^*}\sum_{j=1}^n|\delta U^{i,j,n}_s(e)|^2 N^j(ds,de)+\sum_{t<s\leq T}|\Delta K_s^{i,n}-\Delta K_s^{i}|^2=  |\delta \xi^{i,n}|^2 \nonumber \\
    &  + 2\int_t^T \delta Y_s^{i,n}\delta f_s^{i,n}ds - 2 \int_t^T \delta Y_s^{i,n} \sum_{j=1}^n\delta Z_s^{i,j,n} dB^j_s - 2 \int_t^T \int_{\R^*} \delta Y_s^{i,n} \sum_{j=1}^n\delta U_s^{i,j,n}(e) \Tilde{N}^j(ds,de) \nonumber \\ & +2\int_t^T \delta Y_s^{i,n} d(\delta K_s^{i,n}).
\end{align*}
By standard estimates, from the assumptions on the driver $f$, we get, for all $\varepsilon>0$,
\begin{equation*}\begin{array}{lll}
    \int_t^T\delta Y_s^{i,n}\delta f_s^{i,n}ds \leq \int_t^T C_f|\delta Y_s^{i,n}|^2ds+\int_t^T\frac{3}{\varepsilon}C^2_f|\delta Y_s^{i,n}|^2ds+\int_t^T\frac{\varepsilon}{4} \left\{|\delta Z^{i,n}_s|^2+|\delta U^{i,n}_s|^2_\nu\right\}ds \\ \qquad\qquad\qquad\qquad\qquad
    +\int_t^T\frac{\varepsilon}{4}\mathcal{W}^2_p(L_{n}[\textbf{Y}^n_s],\mathbb{P}_{Y^{i}_s}) ds.
    \end{array}
\end{equation*}
It follows that, for a given constant $C_p>0$, independent of $n$,  we have
\begin{equation}\begin{array}{lll}\label{eq1}
    |\delta Y^{i,n}_0|^p+\left(\int_0^T |\delta Z^{i,n}_s|^2ds\right)^{\frac{p}{2}}+\left(\int_0^T\int_{\R^*}\sum_{j=1}^n|\delta U^{i,j,n}_s(e)|^2 N^j(ds,de)\right)^{\frac{p}{2}} \leq C_p |\delta \xi^{i,n}|^p  \\ \qquad +C_p \left\{\left(2C_f+\frac{6}{\varepsilon}C_f^2\right)T\right\}^{\frac{p}{2}} \underset{0\le s\le T}{\sup} |\delta Y_s^{i,n}|^p  
    +C_p \varepsilon^{\frac{p}{2}} \left(\int_0^T |\delta Z^{i,n}_s|^2 ds\right)^{\frac{p}{2}}+C_p \varepsilon^{\frac{p}{2}} \left(\int_0^T |\delta U^{i,n}_s|^2_\nu ds\right)^{\frac{p}{2}} \\ \qquad +C_p \left(\int_0^T \mathcal{W}^2_p(L_{n}[\textbf{Y}^n_s],\mathbb{P}_{Y^{i}_s})ds\right)^{\frac{p}{2}} +C_p \left\{ \left|\int_0^T \delta Y_s^{i,n} \sum_{j=1}^n\delta Z_s^{i,j,n} dB^j_s \right|^{\frac{p}{2}} 
    \right. \\  \left. \qquad + \left|\int_0^T \int_{\R^*} \delta Y_s^{i,n} \sum_{j=1}^n\delta U_s^{i,j,n}(e) \Tilde{N}^j(ds,de) \right|^{\frac{p}{2}}  +\left|\int_0^T \delta Y_s^{i,n} d(\delta K_s^{i,n})\right|^{\frac{p}{2}}\right \}.
    \end{array}
\end{equation}
By applying the Burkholder-Davis-Gundy inequality, we derive that there exist some constants $m_p>0$ and $l_p>0$ such that
\begin{align*}
C_p \mathbb{E}\left[\left|\int_0^T \delta Y_s^{i,n} \sum_{j=1}^n\delta Z_s^{i,j,n} dB^j_s \right|^{\frac{p}{2}}\right]  & \leq m_p \mathbb{E} \left[\left(\int_0^T (\delta Y_s^{i,n})^2 |\delta Z_s^{i,n}|^2 ds\right)^{\frac{p}{4}}\right] \\ & \leq \frac{m^2_p}{4} \|\delta Y_s^{i,n}\|^p_{\mathcal{S}^p}+\frac{1}{2}\mathbb{E}\left[\left(\int_0^T |\delta Z^{i,n}_s|^2ds\right)^{\frac{p}{2}}\right]
\end{align*}
and
\begin{align*}
C_p \mathbb{E}\left[\left|\int_0^T \int_{\R^*} \delta Y_s^{i,n} \sum_{j=1}^n \delta U_s^{i,j,n}(e) \Tilde{N}^j(ds,de) \right|^{\frac{p}{2}}\right] \leq l_p \mathbb{E} \left[\left(\int_0^T (\delta Y_s^{i,n})^2 \int_{\R^*}\sum_{j=1}^n(\delta U_s^{i,j,n}(e))^2 N^j(ds,de)\right)^{\frac{p}{4}}\right] \nonumber \\ \qquad\qquad \leq \frac{l^2_p}{4} \|\delta Y_s^{i,n}\|^p_{\mathcal{S}^p}+\frac{1}{2}\mathbb{E}\left[\left(\int_0^T \int_{\R^*}\sum_{j=1}^n(\delta U^{i,j,n}_s(e))^2 N^j(ds,de)\right)^{\frac{p}{2}}\right].
\end{align*}
Also recall that, for some constant $e_p>0$ we have (cf. Eq. (1.3) in \cite{MM14})
\begin{align*}
    \mathbb{E}\left[\left(\int_0^T \int_{\R^*}|\delta U^{i,n}_s(e)|^2\nu(de)ds\right)^{\frac{p}{2}}\right] \leq e_p \mathbb{E}\left[\left(\int_0^T \int_{\R^*} \sum_{j=1}^n |\delta U^{i,j,n}_s(e)|^2N^j(de,ds)\right)^{\frac{p}{2}}\right].
\end{align*}
Now, we take the expectation in $\eqref{eq1}$, by using the above inequalities and taking $\varepsilon>0$ small enough, we obtain
\begin{align*}
    \mathbb{E}\left[\left(\int_0^T |\delta Z^{i,n}_s|^2ds\right)^{\frac{p}{2}}+\left(\int_0^T \|\delta U^{i,n}_s\|_\nu^2ds\right)^{\frac{p}{2}}\right]  \leq  C_p |\delta \xi^{i,n}|^p+K_{C_f,\varepsilon,T,p} \|\delta Y_s^{i,n}\|^p_{\mathcal{S}^p} \nonumber \\ \qquad\qquad\qquad\qquad +C_p \mathbb{E}\left[\sup_{0 \leq s \leq T} \mathcal{W}^p_p(L_{n}[\textbf{Y}^n_s],\mathbb{P}_{Y^{i}_s})\right]+\mathbb{E}\left[\left(\sup_{0 \leq s \leq T}|\delta Y_s^{i,n}|(K_T^{i,n}+K_T^{i})\right)^{\frac{p}{2}}\right].
\end{align*}
The above inequality, together with the equations
\begin{align}\label{eq2}
    K_T^{i,n}=Y_0^{i,n}-\xi^{i,n}& -\int_0^Tf(s,Y_s^{i,n}, Z_s^{i,i,n}, U_s^{i,i,n}, L_{n}[\textbf{Y}_s^n])ds \nonumber \\ & +\sum_{j=1}^n\int_0^TZ_s^{i,j,n}dB^j_s+\int_0^T\int_{\R^*} \sum_{j=1}^nU_s^{i,j,n}(e)\Tilde{N}^j(ds,de)
\end{align}
and
\begin{align}\label{eq3}
K_T^{i}=Y_0^{i}-\xi^{i}-\int_0^Tf(s,Y_s^{i}, Z_s^{i}, U_s^{i}, \mathbb{P}_{Y_s^{i}})ds+\int_0^TZ_s^{i}dB^i_s+\int_0^T\int_{\R^*} U_s^{i}(e)\Tilde{N}^i(ds,de),
\end{align}
lead to
\begin{align}\label{ineq}
    & \mathbb{E}\left[\left(\int_0^T |\delta Z^{i,n}_s|^2ds\right)^{\frac{p}{2}}+\left(\int_0^T \int_{\R^*}|\delta U^{i,n}_s(e)|^2 \nu(de)ds\right)^{\frac{p}{2}}\right]  \nonumber \\ & \leq C_p \left( \|\delta Y^{i,n}\|^p_{\mathcal{S}^p}+\mathbb{E}[\sup_{0 \leq s \leq T} \mathcal{W}^p_p(L_{n}[\textbf{Y}^n_s],\mathbb{P}_{Y^{i}_s})]+|\delta \xi^{i,n}|^p+ \|\delta Y^{i,n}\|_{\mathcal{S}^p}^p \Psi^{i,n}_p\right),
\end{align}
with 
$$
\Psi^{i,n}_p:=\mathbb{E}\left[\|Y^{i,n}\|^p_{\mathcal{S}_p}+\|Y^{i}\|^p_{\mathcal{S}_p}+|\xi^{i,n}|^p+|\xi^{i}|^p+\left(\int_0^T|f(s,0,0,0,\delta_0)|^2ds\right)^{p/2}+\sup_{0 \leq s \leq T}|h(s,0,\delta_0)|^p\right],
$$
for a different constant $C_p$.

From Step 1, we have $\|\delta Y^{i,n}\|^p_{\mathcal{S}^p} \to 0$, which also implies the uniform boundedness of the sequence $\left(\|Y^{i,n}\|^p_{\mathcal{S}^p}\right)_{n \geq 0}$, and in particular the one of  $\Psi^{i,n}_p$. This result together with Assumption \ref{Assump:chaos} and inequality \eqref{ineq} gives the desired convergence.
In order to show that $\underset{n\to\infty}{\lim}\|K^{i,n}-K^i\|_{\mathcal{S}^{p}}=0$, we use the equalities  $\eqref{eq2}$ and $\eqref{eq3}$ and the convergence of $(Y^{i,n},Z^{i,n}, U^{i,n})$ shown above.\qed
\end{proof}

\begin{Theorem}[Propagation of chaos of the whole solution]\label{prop-y-u-z} 
Under the assumptions of Proposition \ref{chaos-1}, the particle system \eqref{BSDEParticle} satisfies the propagation of chaos property, i.e. for any fixed positive integer $k$, 
$$
\underset{n\to\infty}{\lim} \text{Law}\,(\Theta^{1,n},\Theta^{2,n},\ldots,\Theta^{k,n})=\text{Law}\,(\Theta^1,\Theta^2,\ldots,\Theta^k).
$$
\end{Theorem}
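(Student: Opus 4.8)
The plan is to follow exactly the route used in the proof of Theorem~\ref{prop-y}, but now working on the product space $(\mathcal{P}_p(G),D_G)$ instead of $\mathcal{P}_2(\mathbb{H}^2)$; the whole argument reduces to combining one metric-space fact, one coupling inequality, and the convergence already established in Proposition~\ref{chaos-1}. First I would recall that $(G,\delta)$ is Polish, so that $(\mathcal{P}_p(G),D_G)$ is Polish and the $p$-Wasserstein metric $D_G$ metrizes the topology of weak convergence (Theorem~6.9 in \cite{Villani}). Consequently, the stated convergence in law
$$
\underset{n\to\infty}{\lim}\,\text{Law}(\Theta^{1,n},\ldots,\Theta^{k,n})=\text{Law}(\Theta^1,\ldots,\Theta^k)
$$
is \emph{equivalent} to showing that $\underset{n\to\infty}{\lim}\,D_G(\P^{k,n},\P_{\Theta}^{\otimes k})=0$, where $\P^{k,n}$ and $\P_{\Theta}^{\otimes k}$ are the joint laws introduced above.

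Next I would invoke the coupling inequality \eqref{chaos-0}, which is obtained by feeding the (sub-optimal) coupling $R$ that pairs each $\Theta^{i,n}$ with its limiting copy $\Theta^i$ into the definition \eqref{W-simultan} of $D_G$; the product structure of the metric $\delta$ then decouples into the four component terms, yielding
$$
D_G^p(\P^{k,n},\P_{\Theta}^{\otimes k})\le k\sup_{i\le k}\Big(\|Y^{i,n}-Y^i\|^p_{\sp}+\|Z^{i,n}-Z^i{\bf e}_i\|^p_{\mathcal{H}^{p,n}}+\|U^{i,n}-U^i{\bf e}_i\|^p_{\mathcal{H}^{p,n}_{\nu}}+\|K^{i,n}-K^i\|^p_{\sp}\Big).
$$

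Finally I would apply Proposition~\ref{chaos-1}: under the smallness condition \eqref{smallnessCond-chaos}, each of the four norms on the right-hand side tends to zero as $n\to\infty$. By the exchangeability of the processes $(\Theta^{i,n},\Theta^i)$ established in Proposition~\ref{exchange}, these bounds are independent of the index $i$, so the supremum over $i\le k$ (with $k$ fixed) also vanishes in the limit. Hence $D_G(\P^{k,n},\P_{\Theta}^{\otimes k})\to 0$, which is precisely the desired propagation of chaos property.

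I do not expect a genuine obstacle at this level: all the analytic difficulty has already been absorbed into the law of large numbers (Theorem~\ref{LLN-2}) and into Proposition~\ref{chaos-1}. The only points that deserve a word of care are that $D_G$ indeed induces weak convergence — which is exactly the role of the Polishness of $G$ together with Villani's theorem — and that the passage from a fixed index $i$ to $\sup_{i\le k}$ is harmless, which is guaranteed by the exchangeability proved in Proposition~\ref{exchange}.
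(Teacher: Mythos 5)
Your proposal is correct and follows essentially the same route as the paper: reduce convergence in law to $D_G(\P^{k,n},\P_{\Theta}^{\otimes k})\to 0$ (using that $D_G$ metrizes weak convergence on the Polish space $G$), bound $D_G^p$ via the coupling inequality \eqref{chaos-0}, and conclude with Proposition \ref{chaos-1}. The only cosmetic difference is your appeal to exchangeability to handle $\sup_{i\le k}$, which is not needed since $k$ is fixed and finite, so the supremum of finitely many vanishing sequences vanishes automatically.
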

\begin{proof}
We obtain the propagation of chaos if we can show that 
$\underset{n\to\infty}{\lim}D_G^p(\P^{k,n},\P_{\Theta}^{\otimes k})=0$. But, in view of the inequality \eqref{chaos-0}, it suffices to show that for each $i=1,\ldots,k$,
\begin{align*}
    \underset{n\to\infty}\lim \,\left( \|Y^{i,n}-Y^i\|_{\sp}+ \|Z^{i,n}-Z^i{\bf e}_i\|_{\mathcal{H}^{p,n}}+ \|U^{i,n}-U^i{\bf e}_i\|_{\mathcal{H}^{p,n}_{\nu}}+ \|K^{i,n}-K^i\|_{\sp}\right)=0,
\end{align*}
which is derived in Proposition \ref{chaos-1}. \qed
\end{proof}

\begin{Remark}\label{omega-f-h} 
All the results of Sections 3 and 4 extend to the case where 
\begin{itemize} 
\item $f(t,\omega,y,z,\mu):=\hat{f}(t,(B_s(\omega),\tilde{N}_s(\omega))_{0\leq s \leq t},y,z,\mu)$, with $\hat{f}$ measurable and Lipschitz w.r.t. $y,z,\mu$.
\item $h(t,\omega,y,\mu):=\hat{h}(t,(B_s(\omega),\tilde{N}_s(\omega))_{0\leq s \leq t},y,\mu)$, with $\hat{h}$ measurable and Lipschitz w.r.t. $y,\mu$.
\item $(f^{i} \circ \textbf{Y}^n)(t,\omega,y,z,u)=\hat{f}(t,(B^i_s(\omega),\tilde{N}^i_s(\omega))_{0\leq s \leq t},y,z^{i},u^{i},\textbf{L}_n[\textbf{Y}_t^n](\omega))$.
\item $(f^{i} \circ {Y})(t,\omega,y,z,u):=\hat{f}(t,(B^i_s(\omega),\tilde{N}^i_s(\omega))_{0\leq s \leq t},y,z^{i},u^{i},\mathbb{P}_{Y_t})$.
\item $h(t,\omega,Y_t^{i,n}(\omega),\textbf{L}_n[\textbf{Y}_t^n](\omega))):=\hat{h}(t,(B^i_s(\omega),\tilde{N}^i_s(\omega))_{0\leq s \leq t},Y_t^{i,n}(\omega),\textbf{L}_n[\textbf{Y}_t^n](\omega))$.
\item $h(t,\omega,Y_t^{i}(\omega),\mathbb{P}_{Y^i_t})):=\hat{h}(t,(B^i_s(\omega),\tilde{N}^i_s(\omega))_{0\leq s \leq t},Y_t^{i}(\omega),\mathbb{P}_{Y^i_t}).$
\end{itemize}
\end{Remark}

\medskip


\appendix 
\section{Technical results}
We establish here the following $L^p$ estimates with \textit{universal constants} for the difference of the solutions $(Y^{i}, Z^{i}, U^{i})$, $i=1,2$ of BSDEs.

\begin{Proposition}[$L^p$ a priori estimates with \textit{universal constants}]\label{estimates} Let $T>0$. Let $\tau$ be a $\mathbb{F}$-stopping time with values in $[0,T]$. Let $p\ge 2$ and let $\xi_1$ and $\xi_2$ $\in L^p(\mathcal{F}_\tau)$. Let $f_1$ be a Lipschitz driver with constant $C$ and let $f_2$ be a driver. For $i=1,2$, let $(Y^{i},Z^{i}, U^{i})$ be a solution of the BSDE associated to terminal time $\tau$, driver $f^{i}$, and terminal condition $\xi^{i}$. For $s \in [0,\tau]$ denote $\delta Y_s:= Y_s^{1}-Y_s^{2}$, $\delta Z_s:= Z_s^{1}-Z_s^{2}$, $\delta U_s:= U_s^{1}-U_s^{2}$, $\delta f_s:=f^{1}(s,Y_s^{2}, Z_s^{2}, U_s^{2})-f^{2}(s,Y_s^{2}, Z_s^{2}, U_s^{2})$ and $\delta \xi:=\xi^1-\xi^{2}$. Let $\eta, \beta>0$ be such that $\beta \geq 2C+\frac{3}{\eta}$ and $\eta \leq \frac{1}{ C^2}$, then for each $t \in [0,\tau]$ we have
\begin{align}
    |e^{\beta t} \delta Y_t|^{p} \leq  2^{p/2-1}\left(\mathbb{E}\left[|e^{\beta \tau} \delta \xi|^p|\mathcal{F}_t\right]+\eta^p \mathbb{E}\left[\left(\int_t^{\tau} |e^{\beta s} \delta f_s|^2 ds \right)^{p/2}|\mathcal{F}_t\right]\right) \,\,\, \text{a.s.}
\end{align}
\end{Proposition}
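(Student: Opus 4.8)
The plan is to reduce the stated $L^p$ bound to an $L^2$-type conditional estimate and then lift it to the $p$-th power by convexity. Writing $\delta Y,\delta Z,\delta U$ for the differences of the two solutions, subtracting the two BSDEs shows that $\delta Y$ solves the backward equation with terminal value $\delta\xi$ at time $\tau$, generator $\Delta_s:=f^1(s,Y^1_s,Z^1_s,U^1_s)-f^2(s,Y^2_s,Z^2_s,U^2_s)$, and martingale part driven by $\delta Z\,dB$ and $\int \delta U\,\tilde N$. I would first split the generator as $\Delta_s=\big(f^1(s,Y^1_s,Z^1_s,U^1_s)-f^1(s,Y^2_s,Z^2_s,U^2_s)\big)+\delta f_s$, so that the first bracket is controlled by the Lipschitz constant, $|\,\cdot\,|\le C(|\delta Y_s|+|\delta Z_s|+|\delta U_s|_\nu)$, while the second is exactly the source term $\delta f_s$ appearing in the statement.

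Next I would apply It\^o's formula to the weighted square $e^{2\beta s}|\delta Y_s|^2$ on $[t,\tau]$. Because of the Poisson part, the generator of $|\cdot|^2$ produces, besides the Brownian contribution $|\delta Z_s|^2$, the compensator term $\int_{\R^*}|\delta U_s(e)|^2\nu(de)\,ds=|\delta U_s|^2_\nu\,ds$; the $dB$- and $\tilde N$-integrals are true martingales once the requisite $L^p$ integrability is in force, so that taking $\mathbb{E}[\cdot\,|\,\mathcal{F}_t]$ (after a standard localization) removes them and yields the balance
\begin{equation*}
e^{2\beta t}|\delta Y_t|^2+\mathbb{E}\big[\textstyle\int_t^\tau e^{2\beta s}(2\beta|\delta Y_s|^2+|\delta Z_s|^2+|\delta U_s|^2_\nu)\,ds\,\big|\,\mathcal{F}_t\big]=\mathbb{E}\big[e^{2\beta\tau}|\delta\xi|^2\,\big|\,\mathcal{F}_t\big]+2\,\mathbb{E}\big[\textstyle\int_t^\tau e^{2\beta s}\delta Y_s\Delta_s\,ds\,\big|\,\mathcal{F}_t\big].
\end{equation*}

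The heart of the argument is to dominate the cross term $2\delta Y_s\Delta_s$ by Young's inequality calibrated to the stated constraints. Using the Lipschitz bound I would write $2C|\delta Y_s||\delta Z_s|\le|\delta Z_s|^2+C^2|\delta Y_s|^2$ and the analogous bound for the $U$-term, so that the $|\delta Z|^2$ and $|\delta U|^2_\nu$ thereby produced cancel those on the left; the remaining $|\delta Y_s|^2$-coefficient, together with the one coming from $2\delta Y_s\delta f_s$, is absorbed by $2\beta|\delta Y_s|^2$ precisely because $\eta\le 1/C^2$ and $\beta\ge 2C+3/\eta$. What survives on the right is exactly the terminal contribution and $\eta^2\,\mathbb{E}\big[\int_t^\tau e^{2\beta s}|\delta f_s|^2\,ds\,\big|\,\mathcal{F}_t\big]$, i.e. the quadratic version of the claimed estimate.

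Finally I would raise this quadratic estimate to the power $p/2\ge 1$. The elementary convexity inequality $(a+b)^{p/2}\le 2^{p/2-1}(a^{p/2}+b^{p/2})$ produces the prefactor $2^{p/2-1}$, and conditional Jensen $(\mathbb{E}[X\,|\,\mathcal{F}_t])^{p/2}\le\mathbb{E}[X^{p/2}\,|\,\mathcal{F}_t]$, applied with $X=e^{2\beta\tau}|\delta\xi|^2$ and with $X=\int_t^\tau e^{2\beta s}|\delta f_s|^2\,ds$, converts the two conditional second moments into the conditional $p$-th moment of $e^{\beta\tau}\delta\xi$ and the conditional $(p/2)$-moment of $\int|e^{\beta s}\delta f_s|^2\,ds$, which is exactly the asserted inequality. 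I expect the main obstacle to be the bookkeeping in the Young step, where the coefficients must be tracked carefully so as to be absorbed under precisely the inequalities $\eta\le 1/C^2$ and $\beta\ge 2C+3/\eta$, together with the justification that the stochastic integrals are genuine martingales (integrability plus localization) so that their conditional expectations vanish.
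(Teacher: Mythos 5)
Your overall route is the same as the paper's: the paper simply cites \cite{dqs15} for the weighted $L^2$ estimate (which is exactly the It\^o-plus-Young computation you spell out) and then lifts it to the $p$-th power by the same convexity inequality $(a+b)^{p/2}\le 2^{p/2-1}\left(a^{p/2}+b^{p/2}\right)$ together with conditional Jensen, exactly as you do. Your treatment of the jump term (compensating $N$ by $\nu(de)\,ds$ under the conditional expectation) and of the martingale/localization issue is correct, and the lifting step is fine.

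There is, however, one concrete step that does not go through as you describe it: the claim that what survives after the Young absorptions is $\eta^{2}\,\mathbb{E}\bigl[\int_t^\tau e^{2\beta s}|\delta f_s|^2\,ds\,\big|\,\mathcal{F}_t\bigr]$. To produce the factor $\eta^{2}$ you must use $2|\delta Y_s||\delta f_s|\le \eta^{-2}|\delta Y_s|^2+\eta^{2}|\delta f_s|^2$, and then, with your bounds $2C|\delta Y_s||\delta Z_s|\le |\delta Z_s|^2+C^2|\delta Y_s|^2$ (and its $U$-analogue), the total coefficient of $|\delta Y_s|^2$ that must be absorbed is $2C+2C^2+\eta^{-2}$, while only $2\beta$ is available. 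Under the stated hypotheses this can fail: take $C=3$, $\eta=1/C^2=1/9$, $\beta=2C+3/\eta=33$; then $2C+2C^2+\eta^{-2}=105>66=2\beta$. What the hypotheses $\eta\le 1/C^2$ and $\beta\ge 2C+3/\eta$ do support is the split $2|\delta Y_s||\delta f_s|\le \eta^{-1}|\delta Y_s|^2+\eta|\delta f_s|^2$, since then $2C+2C^2+\eta^{-1}\le 2C+3/\eta\le\beta\le 2\beta$; this yields the $L^2$ estimate with coefficient $\eta$, and hence $\eta^{p/2}$ --- not $\eta^{p}$ --- after raising to the power $p/2$ and applying Jensen. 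For $\eta<1$ the stated $\eta^p$ is strictly stronger than what this argument delivers, so your proof as written only establishes the proposition with $\eta^{p/2}$ in place of $\eta^p$. You are in good company: the paper's own proof quotes the $L^2$ estimate with the factor $\eta^2$, whereas the It\^o--Young argument under these constraints (and the estimate in the cited reference) carries the factor $\eta$, so the paper inherits exactly the same slip in the constants. Since every application of the proposition in the paper only requires \emph{some} universal constant, nothing downstream is affected; but to prove the statement exactly as written one should either assume $\eta\ge 1$ or restate the bound with $\eta^{p/2}$.
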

\begin{proof}
Recall that, by standard estimates obtained by applying It\^o's formula to $e^{\beta t}|\delta Y_t|^2$, we derive that for $\beta \geq \frac{3}{\eta}+2C$ and $\eta \leq \frac{1}{C^2}$ (see \cite{dqs15}), we have
\begin{align}
    |e^{\beta t} \delta Y_t|^{2} \leq \mathbb{E}\left[|e^{\beta \tau} \delta \xi|^2|\mathcal{F}_t\right]+\eta^2 \mathbb{E}\left[\left(\int_t^{\tau} |e^{\beta s} \delta f_s|^2 ds \right)|\mathcal{F}_t\right] \,\,\, \text{a.s.},
\end{align}
from which it follows that 
\begin{align}
    |e^{\beta t} \delta Y_t|^{p} \leq \left(\mathbb{E}\left[|e^{\beta \tau} \delta \xi|^2|\mathcal{F}_t\right]+\eta^2 \mathbb{E}\left[\left(\int_t^{\tau} |e^{\beta s} \delta f_s|^2 ds \right)|\mathcal{F}_t\right]\right)^{p/2} \,\,\, \text{a.s.},
\end{align}
which leads to, by convexity relation and H{\"o}lder inequality,
\begin{align}
    |e^{\beta t} \delta Y_t|^{p} \leq 2^{p/2-1}\left(\mathbb{E}\left[|e^{\beta \tau} \delta \xi|^p|\mathcal{F}_t\right]+\eta^p\mathbb{E}\left[\left(\int_t^{\tau} |e^{\beta s} \delta f_s|^2 ds \right)^{p/2}|\mathcal{F}_t\right]\right) \,\,\, \text{a.s.}.
\end{align}
\end{proof}

In the next proposition we recall a backward version of the celebrated Gronwall's inequality, we used in Section \ref{convergence}. We include a proof for the sake of completeness. 
\begin{Proposition}[Backward Gronwall inequality]\label{gronwall}
Let $\alpha$ be a non-negative Borel function such that $\int_0^T \alpha(t) dt <+\infty$. Let $c\ge 0$ and let $g$ be a Borel function that is bounded on $[0,T]$ and satisfies
\begin{equation}\label{gron-1}
0\le g(t)\le c+\int_t^T \alpha(s)g(s)\,ds,\quad t\in[0,T].
\end{equation}
Then
\begin{equation}\label{gron-2}
g(t)\le c \exp{\left(\int_t^T \alpha(s)\,ds\right)}.
\end{equation}
\end{Proposition}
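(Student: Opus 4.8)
The plan is to reduce the backward inequality to a monotonicity statement for a suitably weighted version of the right-hand side, mimicking the integrating-factor proof of the classical (forward) Gronwall inequality but reflected in time. First I would set $G(t):=\int_t^T \alpha(s)g(s)\,ds$ and $u(t):=c+G(t)$, so that the hypothesis \eqref{gron-1} reads simply $0\le g(t)\le u(t)$ for every $t\in[0,T]$. Since $g$ is bounded and $\alpha$ is integrable on $[0,T]$, the product $\alpha g$ is integrable, hence $G$ is well defined and absolutely continuous, with $G'(t)=-\alpha(t)g(t)$ for almost every $t$ by the fundamental theorem of calculus for the Lebesgue integral. Consequently $u$ is absolutely continuous and $u'(t)=-\alpha(t)g(t)\ge -\alpha(t)u(t)$ a.e., where the inequality uses $g(t)\le u(t)$ together with $\alpha(t)\ge 0$.

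Next I would introduce the backward integrating factor $A(t):=\int_t^T\alpha(s)\,ds$, which is bounded (as $\int_0^T\alpha<\infty$) and absolutely continuous with $A'(t)=-\alpha(t)$ a.e. The key computation is that the product $t\mapsto u(t)e^{-A(t)}$ is absolutely continuous, with
$$
\frac{d}{dt}\bigl(u(t)e^{-A(t)}\bigr)=e^{-A(t)}\bigl(u'(t)+\alpha(t)u(t)\bigr)\ge 0 \quad\text{a.e.,}
$$
so that $u(\cdot)e^{-A(\cdot)}$ is non-decreasing on $[0,T]$. Evaluating at the terminal time gives $u(T)e^{-A(T)}=c$, because $G(T)=0$ and $A(T)=0$. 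Hence for every $t\le T$ monotonicity yields $u(t)e^{-A(t)}\le c$, i.e. $u(t)\le c\,e^{A(t)}$. Combining with $g(t)\le u(t)$ produces exactly $g(t)\le c\exp\bigl(\int_t^T\alpha(s)\,ds\bigr)$, which is \eqref{gron-2}.

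The only genuinely delicate point is the measure-theoretic bookkeeping: because $g$ is merely Borel and bounded rather than continuous, one cannot appeal to classical differentiation and must instead justify that $G$, $A$, and their product with $e^{-A}$ are absolutely continuous, that the stated a.e.\ derivatives hold, and that an absolutely continuous function with a.e.\ non-negative derivative is non-decreasing. All of these are standard consequences of the Lebesgue differentiation theorem and the absolute-continuity characterization of the indefinite integral; once they are in place the algebra is routine. An alternative I would keep in reserve, should one wish to avoid differentiation altogether, is to iterate \eqref{gron-1}: substituting the bound for $g$ into itself $k$ times generates the partial sums $c\sum_{j=0}^{k}\frac{1}{j!}\bigl(\int_t^T\alpha\bigr)^{j}$ plus a remainder controlled by $\sup|g|\cdot\frac{1}{(k+1)!}(\int_t^T\alpha)^{k+1}$, which vanishes as $k\to\infty$ and reproduces the exponential bound; but the integrating-factor route is shorter.
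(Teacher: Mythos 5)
Your integrating-factor proof is correct, but it takes a genuinely different route from the paper's. The paper argues by iteration: writing $\nu(dt)=\alpha(t)\,dt$ and substituting \eqref{gron-1} into itself repeatedly produces iterated integrals over the ordered region $\{t\le t_1\le\cdots\le t_k<T\}$, each equal to at most $\nu([t,T))^k/k!$ (using that $\nu$ is non-atomic), while the remainder after $k$ steps is controlled by $\sup|g|\cdot\nu([t,T))^{k+1}/(k+1)!$ and vanishes because $g$ is bounded; summing the series gives $c\,e^{\nu([t,T))}$. This is precisely the alternative you sketch ``in reserve'' at the end of your proposal, so in effect you have reproduced the paper's proof as your fallback and supplied a second, independent one as your main argument. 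Comparing the two: the paper's iteration needs no differentiation theory whatsoever --- measurability, boundedness of $g$, and integrability of $\alpha$ are the only ingredients, and the argument transfers verbatim to Gronwall inequalities driven by general non-atomic measures --- whereas your route requires the measure-theoretic scaffolding you correctly flag (absolute continuity of $G$, $A$, and $u\,e^{-A}$, the a.e.\ product rule, and the fact that an absolutely continuous function with a.e.\ nonnegative derivative is non-decreasing). Your use of these facts is sound: $e^{-A}$ is absolutely continuous because $A$ is bounded and $x\mapsto e^{-x}$ is Lipschitz on bounded sets, and the product of two bounded absolutely continuous functions on $[0,T]$ is absolutely continuous, so the monotonicity of $u\,e^{-A}$ and the terminal evaluation $u(T)e^{-A(T)}=c$ close the argument. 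What your route buys is a cleaner structural statement (monotonicity of the weighted quantity) familiar from ODE theory; what the paper's buys is brevity and minimal analytic machinery.
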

\begin{proof}

Consider the Borel measure $\nu(dt):=\alpha(t)dt$ on $[0,T]$. Iterating \eqref{gron-1} yields
$$\begin{array}{lll}
g(t)\le c+c \sum_{k=1}^{\infty} \int_{[t,T)} \int_{[t_1,T)} \cdots \int_{[t_{k-1},T)}\nu(dt_k)\,\nu(dt_{k-1})\cdots \nu(dt_1) \\ \qquad
\le c+c \sum_{k=1}^{\infty} \frac{1}{k!} (\nu([t,T)))^k \\
\qquad = ce^{\nu([t,T))}=c\exp{\left(\int_t^T \alpha(s)\,ds\right)}.
\end{array}
$$

\end{proof}

{}


\begin{thebibliography}{}

\bibitem{AD11} Andersson, D. and Djehiche, B. (2011). A maximum principle for SDEs of mean-field type. Applied Mathematics \& Optimization, 63(3), 341--356.
  
\bibitem{abc19} Acciaio, B., Backhoff-Veraguas, J., and Carmona, R. (2019). Extended mean field control problems: stochastic maximum principle and transport perspective. SIAM journal on Control and Optimization, 57(6), 3666-3693.

\bibitem{bbp97} Barles, G., Buckdahn, R., and Pardoux, E. (1997). Backward stochastic differential equations and integral-partial differential equations. Stochastics: An International Journal of Probability and Stochastic Processes, 60(1-2), 57-83.

\bibitem{b68} Billingsley, P. (1968). Convergence of probability measures. John Wiley \& Sons.

\bibitem{bcdph20} Briand, P., Cardaliaguet, P., de Raynal, P-\'{E} C and Hu, Y. (2020). Forward and backward stochastic differential equations with normal constraints in law. Stochastic Processes and their Applications, 130, 7021-7097.

\bibitem{bh21} Briand, P., and Hibon, H. (2021). Particles Systems for mean reflected BSDEs. Stochastic Processes and their Applications, 131, 253-275.

\bibitem{blp09} Buckdahn, R., Li, J., and Peng, S. (2009). Mean-field backward stochastic differential equations and related partial differential equations. Stochastic processes and their Applications, 119(10), 3133-3154.

\bibitem{bdlp09} Buckdahn, R., Djehiche, B., Li, J., and Peng, S. (2009). Mean-field backward stochastic differential equations: a limit approach. The Annals of Probability, 37(4), 1524-1565.

\bibitem{cdl13} Carmona, R., Delarue, F., and Lachapelle, A. (2013). Control of McKean–Vlasov dynamics versus mean field games. Mathematics and Financial Economics, 7(2), 131-166.

\bibitem{Delarue} Delarue, F. (2002). On the existence and uniqueness of solutions to
FBSDEs in a non-degenerate case. Stochastic Processes and their Applications, 99, 209-286.

\bibitem{DM82} Dellacherie, C., and Meyer, P. A. (1982). Probabilities and potential B, Chapter V to VIII. Hermann.

\bibitem{deh19} Djehiche, B., Elie, R., and Hamad{\`e}ne, S. (2019). Mean-field reflected backward stochastic differential equations. arXiv preprint arXiv:1911.06079.

\bibitem{dl1} Dumitrescu, R., Labart C. (2016). Reflected scheme for doubly reflected BSDEs with jumps and RCLL obstacles. Journal of Computational and Applied Mathematics, 296, 827-839.

\bibitem{dl2} Dumitrescu, R., Labart C. (2016). Numerical approximation of doubly reflected BSDEs with jumps and RCLL obstacles. Journal of Mathematical Analysis and Applications 442 (1), 206-243.

\bibitem{dqs15} Dumitrescu, R., Quenez, M. C., and Sulem, A. (2015). Optimal stopping for dynamic risk measures with jumps and obstacle problems. Journal of Optimization Theory and Applications 167 (1), 219-242.

\bibitem{dqs16} Dumitrescu, R., Quenez, M. C., and Sulem, A. (2016). Generalized Dynkin games and doubly reflected BSDEs with jumps. Electronic Journal of Probability, 21, 1-32.


\bibitem{ekppq97} El Karoui, N., Kapoudjian, C., Pardoux, E., Peng, S., and Quenez, M. C. (1997). Reflected solutions of backward SDE's, and related obstacle problems for PDE's. The Annals of Probability, 25(2), 702-737.


\bibitem{essaky08} Essaky, E. H. (2008). Reflected backward stochastic differential equation with jumps and RCLL obstacle. Bulletin des sciences mathematiques, 132(8), 690-710.

\bibitem{g88} G{\"a}rtner, J. (1988). On the McKean-Vlasov limit for interacting diffusions. Mathematische Nachrichten, 137(1), 197-248.


\bibitem{ho03} Hamad{\`e}ne, S., and Ouknine, Y. (2003). Reflected backward stochastic differential equation with jumps and random obstacle. Electronic Journal of Probability, 8.

\bibitem{ho16} Hamad{\`e}ne, S., and Ouknine, Y. (2016). Reflected backward SDEs with general jumps. Theory of Probability \& Its Applications, 60(2), 263-280.

\bibitem{hry19} Hu, K., Ren, Z., and Yang, J. (2019). Principal-agent problem with multiple principals. arXiv preprint arXiv:1904.01413.


\bibitem{j16} Jabin, P. E., and Wang, Z. (2016). Mean field limit and propagation of chaos for Vlasov systems with bounded forces. Journal of Functional Analysis, 271(12), 3588-3627.

\bibitem{jmw} Jourdain, B., M{\`e}l{\`e}ard, S and Woyczynski, W A. (2008). Nonlinear SDEs driven by L{\`e}vy processes and related PDEs, Alea 4, 1–29.

\bibitem{KS98} Karatzas, I. and Shreve, S. E. (1998). Methods of mathematical finance (Vol. 39, pp. xvi+-407). New York: Springer.

\bibitem{kp16} Kruse, T., and Popier, A. (2016). BSDEs with monotone generator driven by Brownian and Poisson noises in a general filtration. Stochastics, 88(4), 491-539.

\bibitem{l18} Lacker, D. (2018). On a strong form of propagation of chaos for McKean-Vlasov equations. Electronic Communications in Probability, 23, 1-11.

\bibitem{lt19} Lauri$\grave{e}$re, M., and Tangpi, L. (2019). Backward propagation of chaos. arXiv preprint arXiv: 1911.06835.

\bibitem{ll07} Lasry, J. M., and Lions, P. L. (2007). Mean field games. Japanese journal of mathematics, 2(1), 229-260.

\bibitem{l14} Li, J. (2014). Reflected mean-field backward stochastic differential equations. Approximation and associated nonlinear PDEs. Journal of Mathematical Analysis and Applications, 413(1), 47-68.

\bibitem{k56} Kac, M. (1956). Foundations of Kinetic Theory. In Third Berkeley Symposium on Mathematical Statistics and Probability (pp. 171-197).

\bibitem{MM14} Marinelli, C., and R{\"o}ckner, M. (2014). On maximal inequalities for purely discontinuous martingales in infinite dimensions. S{\'e}minaire de Probabilit{\'e}s XLVI. Springer, Cham, 2014. 293-315.


\bibitem{m67} McKean, H. P. (1967). Propagation of chaos for a class of non-linear parabolic equations. Stochastic Differential Equations (Lecture Series in Differential Equations, Session 7, Catholic Univ., 1967), 41-57.

\bibitem{pp90} Pardoux, E., and Peng, S. (1990). Adapted solution of a backward stochastic differential equation. Systems \& Control Letters, 14(1), 55-61.

\bibitem{Peng99} Peng, E. (1999). Monotonic limit theorem of BSDE and nonlinear decomposition theorem of Doob–Meyer’s type. Probab. Theory Relat. Fields. 113, 473-499

\bibitem{Peng04} Peng, S. (2004). Nonlinear expectations, nonlinear evaluations and risk measures. In Stochastic methods in finance (pp. 165-253). Springer, Berlin, Heidelberg.

\bibitem{Protter05} Protter, P. E. (2005). Stochastic integration and differential equations. Springer, Berlin, Heidelberg.

\bibitem{Rachev} Rachev, S. T., Klebanov, L., Stoyanov, S. V., and  Fabozzi, F. (2013). The methods of distances in the theory of probability and statistics. Springer Science \& Business Media.

\bibitem{s12} Shkolnikov, M. (2012). Large systems of diffusions interacting through their ranks. Stochastic Processes and their Applications, 122(4), 1730-1747.

\bibitem{s91} Sznitman, A. S. (1991). Topics in propagation of chaos. In Ecole d'{\'e}t{\'e} de probabilit{\'e}s de Saint-Flour XIX—1989 (pp. 165-251). Springer, Berlin, Heidelberg.

\bibitem{tl94} Tang, S., and Li, X. (1994). Necessary conditions for optimal control of stochastic systems with random jumps. SIAM Journal on Control and Optimization, 32(5), 1447-1475.

\bibitem{yw71} Yamada, T., and Watanabe, S. (1971). On the uniqueness of solutions of stochastic differential equations. Journal of Mathematics of Kyoto University, 11(1), 155-167.

\bibitem{Villani} Villani, C (2009). Optimal transport: old and new. Vol. 338. Berlin: Springer.

\end{thebibliography}
\end{document}